\documentclass[a4paper,reqno,oneside]{amsart}

\usepackage{amsmath,amsfonts,amssymb,amsthm}
\usepackage{mathtools}
\usepackage{comment}
\usepackage[colorinlistoftodos, textwidth=3.1cm]{todonotes}
\usepackage{indentfirst}
\usepackage{geometry}

\usepackage[utf8]{inputenc} 
\usepackage[T1]{fontenc}    
\usepackage[hidelinks]{hyperref}       
\usepackage{url}            
\usepackage{booktabs}       
\usepackage{nicefrac}       
\usepackage{microtype}      
\usepackage{xcolor}         
\usepackage{cleveref}       
\usepackage{lipsum}         
\usepackage{graphicx}
\usepackage{doi}
\usepackage{booktabs}		

\usepackage{ifthen}

\usepackage{tikz}
\usepackage{tikzscale}
\usepackage{pgfplots}
\usepackage{float}
\usepackage{placeins} 

\theoremstyle{plain}
\newtheorem{teo}{Theorem}
\newtheorem{prop}[teo]{Proposition}
\newtheorem{corl}[teo]{Corollary}
\newtheorem{lema}[teo]{Lemma}
\newtheorem{remk}[teo]{Remark}
\theoremstyle{definition}

\newtheorem*{teo*}{Theorem}

\newcommand{\rquad}[1]{\foreach \n in {1,...,#1}{\quad}}

\title{Analysis of an Emden-Fowler-Hénon type equation}

\author[P.S. Corrêa Junior]{Pablo dos Santos Corrêa Junior}
\address[P.S. Corrêa Junior]{Instituto de Ciências Matemáticas e de Computação \\
		Universidade de São Paulo\\
		São Carlos, SP, Brazil.}
		\email{p.correa@usp.br}
		
\author[E. Moreira dos Santos]{Ederson Moreira dos Santos}
\address[E. Moreira dos Santos]{Instituto de Ciências Matemáticas e de Computação \\
	Universidade de São Paulo\\
	São Carlos, SP, Brazil.}
	\email{ederson@icmc.usp.br}
\author[D. Schiera]{Delia Schiera}
\address[D. Schiera]{CAMGSD, Departamento de Matemática,\\
		Instituto Superior Técnico\\
		Universidade de Lisboa \\
		Lisboa, Portugal.}
		\email{delia.schiera@tecnico.ulisboa.pt}
\hypersetup{
	pdftitle={Analysis of an Emden-Fowler-Hénon type equation},
	pdfsubject={MSC: 35B06, 35B40, 35J15, 35J61},
	pdfauthor={Pablo dos Santos Correa Junior, Ederson Moreira dos Santos, Delia Schiera},
	pdfkeywords={Elliptic equations, Radial solutions, Qualitative properties},
}

\begin{document}
	\maketitle

	\begin{abstract}
		This work investigates a Hénon-type equation with the cap-shaped weight $
		V_{\alpha}(|x|)=(4|x|(1-|x|))^\alpha$,
		which concentrates on the sphere of radius \(1/2\) as \(\alpha\to\infty\). Particular attention is devoted to ground state solutions and the phenomenon of symmetry breaking in the large-\(\alpha\) regime. A complete description of the asymptotic behavior of ground state radial solutions is obtained as \(\alpha\to\infty\). The analytical results are further supported by numerical simulations of the radial ground states.
	\end{abstract}
	
    
	\noindent {\footnotesize{\textit{\textbf{Keywords:}} Asymptotic behavior, Elliptic equations, Hénon type equations, Numerical approximation, Qualitative properties, Radial solutions, Symmetry breaking.		\\
\noindent \textit{\textbf{Mathematics Subject Classification:} }35B06, 35J15, 35J20, 35B40.}}
	
	\section{Introduction}

	We are interested in \emph{positive} solutions to the problem
	\begin{equation}\label{problema-principal}
		\begin{cases}
			-\Delta u &= (4|x|(1-|x|))^\alpha |u|^{p-1}u \quad \text{in } B, \\
			\phantom{-\Delta }u&=0  \rquad{9} \;\, \text{on } \partial B,
		\end{cases}
	\end{equation}
	with $\alpha > 0$, $B= \{x\in \mathbb{R}^N; |x|< 1\}$ and $1<p$. Here we set
	\[
	\begin{cases}
		2^*=\frac{2N}{N-2} \quad \text{for} \quad N\geq 3 \quad \text{and}  \quad 2^* =+ \infty \quad \text{for} \quad N=1, 2,\vspace{5pt}\\
		2^*_{\alpha}=\frac{2(N+\alpha)}{N-2} \quad \text{for} \quad N\geq 3 \quad \text{and}  \quad 2^*_{\alpha} =+ \infty \quad \text{for} \quad N=1, 2.
	\end{cases}
	\]
	Problem \eqref{problema-principal} is a particular case of the class of problems given by $\Delta u + V(|x|)|u|^{p-1}u = 0$ with Dirichlet boundary conditions. A wide variety of weights $V(|x|)$ have been studied in the literature, especially in cases where $V\equiv 1$ and $V(|x|)=|x|^\alpha$. When considering radial solutions, equation \eqref{problema-principal} is also an Emden-Fowler equation, in the sense defined by \cite{Wong1975}.
	
	If $V \equiv 1$, the resulting equation is known as the \emph{Lane–Emden equation}, which has numerous applications. In particular, it is used in astronomy \cite[Sec.~7.2]{Hansen2004} to model stellar structures, as well as in various problems in analysis and geometry; see \cite{Talenti1976,Gidas1981,Dancer2012,Amadori2017,Ioku2018} and the references therein. 
	
	If $V(|x|) = |x|^\alpha$, the equation is referred to as the \emph{Hénon equation}, introduced in \cite{Henon1974} as a generalization of the Lane-Emden model. In that work, the author studied the behavior of spherical stellar systems, with particular emphasis on radial solutions. Within this framework, natural questions concerning the stability of the model arise: for which values of $\alpha$ and $p$ does a radial solution of $\Delta u + |x|^\alpha |u|^{p-1}u = 0$ correspond to a stable stellar structure?
	From both physical and variational viewpoints, one expects a natural candidate to be a radial solution that is also a \emph{ground state}, namely, a solution that minimizes the associated energy functional among all admissible nontrivial solutions. However, as shown in \cite{SMETS2002}, when $\alpha > 0$ is sufficiently large, ground state solutions of the Hénon equation fail to be radial; this phenomenon is known as \emph{symmetry breaking}.
	This loss of radial symmetry has motivated extensive research on the \emph{asymptotic behavior} of both ground state solutions and radial ground state solutions of the Hénon equation, particularly as $\alpha \to \infty$ or as $p$ approaches its limiting values; see \cite{SMETS2002,Cao2008,Byeon2006,LeitedaSilva2021} for further details.
	
	Building on the extensive literature on the Hénon equation, one may select the weight $V$ in $\Delta u + V(|x|)|u|^{p-1}u = 0$ so as to capture behaviors complementary to those induced by $|x|^\alpha$, while addressing the same fundamental questions: \emph{Are ground state solutions radial? What is the asymptotic behavior of ground state solutions?}
	It is worth noting that if $V(|x|)$ is radially decreasing, then the moving planes method \cite{Gidas1979} ensures that all positive solutions of $\Delta u + V(|x|)|u|^{p-1}u = 0$ are radially symmetric. Therefore, the most interesting cases arise when $V$ is not radially decreasing.
	In \cite{Mercuri2019}, the authors considered a ``cup-shaped'' weight $V(|x|)$ defined by $(1 - \frac{|x|}{R})^\alpha$ for $0 \leq |x| \leq R$ and $(1 - \frac{1 - |x|}{1 - R})^\alpha$ for $R \leq |x| \leq 1$, and investigated the occurrence of symmetry breaking. Other classes of weights have been studied in \cite{Kajikiya2012}, where sufficient conditions for symmetry breaking are established.
	Motivated by the goal of complementing the existing literature through the exploration of different qualitative behaviors, we propose problem \eqref{problema-principal} with the ``cap-shaped'' weight $V(|x|) = (4|x|(1 - |x|))^\alpha$. In particular, this weight does not satisfy the assumptions of \cite[Theorem 2.1]{Kajikiya2012}, namely those involving the $\mu$-function introduced therein.

	From now on, we set $V_\alpha(r) = (4r(1-r))^\alpha$ for $r \in [0,1]$. Then the equation in \eqref{problema-principal} can be rewritten as
	\[
	\frac{1}{V_\alpha(|x|)} \Delta u + |u|^{p-1}u = 0,
	\]
	where $\frac{1}{V_\alpha(|x|)}$ may be interpreted as a diffusion coefficient. In this interpretation, the medium $B$ favors diffusion near the origin and near the boundary $\partial B$, while diffusion is minimal on the radius sphere $1/2$. This suggests that concentration phenomenon is likely to occur around this sphere as $\alpha \to \infty$. This observation provides one of the main motivations for considering weights of the form $V_\alpha$.
	\subsection{Basic notation}

	To state the main results we fix the following notation. Associated with problem \eqref{problema-principal}, the energy functional is defined by
	\begin{equation}\label{funcional energia}
		I(u)= \frac{1}{2}\int_B |Du|^2  dx - \frac{1}{p+1}\int_B V_\alpha(|x|) |u|^{p+1}  dx.
	\end{equation}
	For $1 < p < 2^* - 1$, critical points of $I : H^1_0(B) \to \mathbb{R}$ are classical solutions of \eqref{problema-principal}. For $1 < p < 2^*_{\alpha} - 1$, critical points of $I : H^1_{0,\mathrm{rad}}(B) \to \mathbb{R}$ correspond to classical radial solutions of \eqref{problema-principal}; see \cite{Ni1982}.
	Define
	\begin{equation}\label{minimizacao no espaco todo}
		S_\alpha = \inf_{u \in H^1_0(B)\setminus\{0\}}
		\frac{\int_B |Du|^2 \, dx}
		{\left(\int_B V_\alpha(|x|)\, |u|^{p+1} \, dx\right)^{\frac{2}{p+1}}}
	\end{equation}
	and
	\begin{equation}\label{minimizacao nas radiais}
		S_{\alpha,\mathrm{rad}} = \inf_{u \in H^1_{0,\mathrm{rad}}(B)\setminus\{0\}}
		\frac{\int_B |Du|^2 \, dx}
		{\left(\int_B V_\alpha(|x|)\, |u|^{p+1} \, dx\right)^{\frac{2}{p+1}}}.
	\end{equation}
	Also set
	\begin{equation*}
		S = \inf_{u \in H^1_0(B)\setminus\{0\}}
		\frac{\int_B |Du|^2 \, dx}
		{\left(\int_B |u|^{p+1} \, dx\right)^{\frac{2}{p+1}}}.
	\end{equation*}
	Solutions to the minimization problems \eqref{minimizacao no espaco todo} and \eqref{minimizacao nas radiais}, for $1 < p < 2^* - 1$ and $1 < p < 2^*_{\alpha} - 1$, respectively, yield, up to rescaling, ground state solutions and radial ground state solutions of \eqref{problema-principal}. Indeed, denoting by $U_\alpha$ and $U_{\alpha, rad}$ positive minimizers to $S_\alpha$ and $S_{\alpha,rad}$, respectively, with $\int_B V_\alpha(|x|)|U_{\alpha}|^{p+1}dx = \int_B V_\alpha(|x|)|U_{\alpha,rad}|^{p+1}dx = 1$, define
	\[
	u_\alpha = S_\alpha^{\frac{1}{p-1}} U_\alpha,
	\qquad
	u_{\alpha,\mathrm{rad}}=S_{\alpha,\mathrm{rad}}^{\frac{1}{p-1}} U_{\alpha,\mathrm{rad}}.
	\]
	
	Then $u_\alpha$ is a positive ground state solution, and $u_{\alpha,\mathrm{rad}}$ is a positive ground state radial solution of \eqref{problema-principal} (solution with minimal energy among nontrivial radial solutions).
	
	Throughout this text the notation $u_{\alpha}$ is used to denote a ground state solution of \eqref{problema-principal}, while $u_{\alpha,\mathrm{rad}}$ denotes a ground state \emph{radial} solution (i.e., $u_{\alpha,\mathrm{rad}} \in H^1_{0,\mathrm{rad}}(B)$) of \eqref{problema-principal}.
	Define $C_\alpha = I(u_\alpha)$ and $C_{\alpha,\mathrm{rad}} = I(u_{\alpha,\mathrm{rad}})$. A direct computation yields the following useful relations:
	\begin{equation}\label{eq util relacao entre S _alpha,rad e C_alpha,rad}
		S_{\alpha} = \left(\frac{2(p+1)}{p-1}\right)^{\frac{p-1}{p+1}} (C_{\alpha})^{\frac{p-1}{p+1}}
		\quad \text{and} \quad
		S_{\alpha,\mathrm{rad}} = \left(\frac{2(p+1)}{p-1}\right)^{\frac{p-1}{p+1}} (C_{\alpha,\mathrm{rad}})^{\frac{p-1}{p+1}}.
	\end{equation}
	
	The $C^2(\overline{B})$ regularity of $u_{\alpha}$ and $u_{\alpha,\mathrm{rad}}$ follows from classical elliptic regularity theory; see \cite[Appendix~B]{Struwe2010}.
	Moreover, ground state solutions of \eqref{problema-principal} exhibit a well-known symmetry property, they are \emph{foliated Schwarz symmetric}; see, for instance, \cite[Theorem~6.2]{Bartsch2005}.
	
	Let $G_N(t,s)$ denote the Green’s function associated with the problem
	\begin{equation}\label{Greenint}
		-\frac{1}{r^{N-1}}\bigl(r^{N-1}u'(r)\bigr)' = F(r), \quad r \in (0,1),
		\qquad \text{with } u'(0)=0, \; u(1)=0,
	\end{equation}
	where $F:(0,1) \to \mathbb{R}$. Then, see for instance \cite[Appendix]{Singh2014}, $G_N(t,s)$ has the following expression: if $N =1$, 
	\begin{equation*}
		G_N(t,s) = \begin{cases*}
			(1-s), \quad 0<t\leq s \leq 1,\\
			(1-t), \quad 0<s\leq t \leq 1; 
		\end{cases*}
	\end{equation*}
	if $N=2$,
	\begin{equation*}
		G_N(t,s) = -\begin{cases*}
			\ln (s), \quad 0<t\leq s \leq 1,\\
			\ln (t), \quad 0<s\leq t \leq 1; 
		\end{cases*}
	\end{equation*}
	and, for $N>2$, 
	\begin{equation*}
		G_N(t,s) = \frac{1}{N-2}\begin{cases*}
			\frac{1}{s^{N-2}}-1, \quad 0<t\leq s \leq 1,\\
			\frac{1}{t^{N-2}}-1, \quad 0<s\leq t \leq 1 .
		\end{cases*}
	\end{equation*}
	Therefore, any radial solutions of problem \eqref{problema-principal} can be expressed as	
	\begin{equation}
		\label{eq: green function integral representation}
		u(t) = \int_{0}^{1}s^{N-1}G_N(t,s)V_\alpha(s)u^p(s)ds.
	\end{equation}

	\subsection{Main results}
	In the search for ground state solutions to the problem \eqref{problema-principal}, the usual subcritical growth condition $p < 2^* - 1$ has to be imposed. With regard to the existence of radial solutions, the presence of the term $|x|^\alpha$ in the weight $V_{\alpha}$ allows the existence of solutions to a wider range of $p$, namely, for $1<p<2^*_\alpha-1$. The same phenomenon is observed for the Hénon equation in  \cite{Ni1982}.
	
	\begin{teo}[Existence of Minimizers]
		\label{teo existencia de minimizantes}
		
		Let $\alpha>0$ and $N\geq 1$. 
		\begin{itemize}
			\item[(i)] For $1<p<2^*_{\alpha}-1$, the constant $S_{\alpha,rad}$ is attained. Any minimizer for $S_{\alpha, rad}$ has definite sign. In particular, problem \eqref{problema-principal} has a positive radial classical solution.
			\item[(ii)] For $1<p<2^*-1$, the constant $S_{\alpha}$ is attained. Any minimizer for $S_{\alpha}$ has definite sign. In particular, problem \eqref{problema-principal} has a ground state solution. Moreover, any ground state solution of problem \eqref{problema-principal} has definite sign.
		\end{itemize}
	\end{teo}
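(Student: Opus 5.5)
The plan is the direct method of the calculus of variations, with the key input being a compact embedding of $H^1_{0,\mathrm{rad}}(B)$ (respectively $H^1_0(B)$) into the weighted space $L^{p+1}(B;V_\alpha\,dx)$. For (ii) this is immediate. Since $0\le V_\alpha\le 1$ on $B$, we have $\int_B V_\alpha|u|^{p+1}\le \int_B|u|^{p+1}$. For $p<2^*-1$ the Rellich--Kondrachov embedding $H^1_0(B)\hookrightarrow L^{p+1}(B)$ is compact, so $u\mapsto \int_B V_\alpha|u|^{p+1}$ is weakly sequentially continuous on $H^1_0(B)$. In particular $S_\alpha\ge S>0$.

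For (i), with $p<2^*_\alpha-1$, we need a radial weighted embedding. Near $r=1$ we have $V_\alpha\le 1$, and radial functions in $H^1_{0,\mathrm{rad}}$ are bounded away from the origin (Strauss-type estimate), so the region $\{1/2\le|x|<1\}$ poses no difficulty. Near the origin $V_\alpha(r)\le 4^\alpha r^\alpha$. Here I would invoke Ni's radial lemma \cite{Ni1982}: for $u\in H^1_{0,\mathrm{rad}}(B)$ and $N\ge3$,
\[
|u(r)|\le C\, r^{-\frac{N-2}{2}}\|Du\|_{L^2}.
\]
For $N=1,2$ one has a corresponding bound, with $u$ bounded and a logarithmic bound respectively. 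Then $\int_{B_\delta} r^\alpha|u|^{p+1}\le C\|Du\|_2^{p+1}\int_0^\delta r^{\alpha-(N-2)(p+1)/2+N-1}dr$. This integral is finite, and tends to $0$ as $\delta\to0$, exactly when $p+1<2(N+\alpha)/(N-2)$. This gives uniform smallness near $0$. On the annuli $\delta<|x|<1$ the problem is effectively one-dimensional, and compactness follows from the compact embedding of $H^1$ into $C$ there. Together these yield compactness of the embedding. I expect this step to be the main (though standard) obstacle; for $N\ge3$ the exponent bookkeeping above is exactly the threshold $2^*_\alpha$.

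With compactness in hand, take a minimizing sequence normalized by $\int_B V_\alpha|u_n|^{p+1}=1$. It is bounded in $H^1_0$, so it converges weakly to some $u$, with the constraint preserved by compactness. Weak lower semicontinuity of the Dirichlet norm then gives a minimizer, and $u\ne0$. Positivity of the infimum follows from the same embedding inequality. For the radial case, Palais' principle of symmetric criticality (or simply the fact that $H^1_{0,\mathrm{rad}}$ is invariant under $O(N)$) shows that a constrained minimizer is, after rescaling by $S^{1/(p-1)}$, a weak solution of \eqref{problema-principal}. Regularity then gives a classical solution, using \cite{Ni1982} in the radial supercritical range.

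For the sign statements, let $u$ be a minimizer. Then $|u|$ has the same Rayleigh quotient, since $|D|u||=|Du|$ a.e., so $|u|$ is also a minimizer and hence, after scaling, a nonnegative solution of $-\Delta|u|=\lambda V_\alpha|u|^p\ge0$. By the strong maximum principle $|u|>0$ in $B$, since $V_\alpha>0$ except at the origin. Therefore $u$ never vanishes in $B$, and by continuity it has a definite sign. For the final assertion in (ii), I would show that any ground state $w$ (a least-energy nontrivial critical point of $I$) lies on the Nehari manifold. Moreover $I$ restricted to Nehari equals $\frac{p-1}{2(p+1)}$ times a power of the Rayleigh quotient, so $w$ minimizes $S_\alpha$, and the previous argument applies. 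The same reasoning gives the definite sign of minimizers for $S_{\alpha,\mathrm{rad}}$, and $|u|$ stays radial.
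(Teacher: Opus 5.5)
Your proposal is correct and follows the paper's route. The paper uses compactness (Rellich--Kondrachov for (ii), and for (i) it simply refers to Ni's argument, which is the radial-lemma embedding you spell out), the direct method, replacement of a minimizer by its absolute value, the Lagrange multiplier rule with regularity, and the strong maximum principle; your Nehari-manifold remark supplies the ground-state sign claim that the paper leaves implicit. One small caveat: for $N\ge 3$ and $2^*-1<p<2^*_\alpha-1$ the functional is not even finite on all of $H^1_0(B)$, so Palais' principle does not apply verbatim, and you should instead average test functions over $O(N)$ (legitimate since $V_\alpha|u|^p\in L^1_{\mathrm{loc}}(B)$ by the radial estimate) or argue through the radial ODE as Ni does.
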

	The proof of item $(i)$ is parallel to the proof of \cite[Theorem~6]{Ni1982}. As for item $(ii)$, by Rellich-Kondrachov Theorem (see \cite[Theorem~9.16]{2011_Brezis_BOOK}) there exists $U_\alpha \in H_0^1(B)$ minimizer for $S_\alpha$. Replacing $U_\alpha$ by $|U_\alpha|$ we may assume that the minimizer is non-negative. Now, by Lagrange multiplier rule and regularity theory, one obtains that $U_\alpha$ is $C^2(\overline{B})$ and thus, by the strong maximum principle, $U_\alpha$ is positive.

	The following theorem establishes a symmetry-breaking result for ground state solutions in the regime of large values of $\alpha$.
	
	\begin{teo}[Symmetry breaking]\label{teorema quebra de simetria}
		Let $N\geq 3$ and $\frac{N}{N-2}<p<2^*-1$. Then, for any $\alpha$ sufficiently large, ground state solutions of \eqref{problema-principal} are not radially symmetric. In particular, problem \eqref{problema-principal} has two positive classical solutions.    
	\end{teo}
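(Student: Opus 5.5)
The strategy is the one of Smets--Su--Willem: compare the growth rates, as $\alpha\to\infty$, of the two levels $S_\alpha$ and $S_{\alpha,\mathrm{rad}}$. It suffices to show that
\[
\lim_{\alpha\to\infty}\frac{S_{\alpha,\mathrm{rad}}}{S_\alpha}=+\infty,
\]
or even just $S_\alpha<S_{\alpha,\mathrm{rad}}$ for $\alpha$ large. Then no minimizer of $S_\alpha$ can be radial, so by \eqref{eq util relacao entre S _alpha,rad e C_alpha,rad} we get $C_\alpha<C_{\alpha,\mathrm{rad}}$. The ground state $u_\alpha$ and the radial ground state $u_{\alpha,\mathrm{rad}}$ from Theorem~\ref{teo existencia de minimizantes} are therefore distinct positive classical solutions. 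The hypothesis $p<2^*-1$ guarantees that both levels are attained, since $2^*\le 2^*_\alpha$.

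\textbf{Upper bound for $S_\alpha$ (concentration).} Fix $\varphi\in C_c^\infty(\mathbb{R}^N)$, $\varphi\ne0$, fix $x_0$ with $|x_0|=1/2$, and test with $\varphi_\varepsilon(x)=\varphi((x-x_0)/\varepsilon)$. Near $r=1/2$ write $V_\alpha(r)=(1-(2r-1)^2)^\alpha\ge e^{-c\alpha(2r-1)^2}$, with $c$ close to $1$. Then
\[
\int|D\varphi_\varepsilon|^2=\varepsilon^{N-2}\|D\varphi\|_2^2,\qquad \int V_\alpha|\varphi_\varepsilon|^{p+1}\ge c'\,\varepsilon^N \quad\text{when } \varepsilon=\alpha^{-1/2}.
\]
This gives
\[
S_\alpha\le C\,\varepsilon^{N-2-\frac{2N}{p+1}}=C\,\alpha^{\frac{N}{p+1}-\frac{N-2}{2}}.
\]
The exponent is negative precisely because $p+1<2^*$. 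So $S_\alpha\lesssim \alpha^{-\beta}$ with $\beta=\frac{N-2}{2}-\frac{N}{p+1}>0$. One could also optimize $\varepsilon$, but $\varepsilon=\alpha^{-1/2}$ already reflects the width of the cap.

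\textbf{Lower bound for $S_{\alpha,\mathrm{rad}}$.} This is the main step. For radial $u\in H^1_{0,\mathrm{rad}}(B)$, use the Strauss-type pointwise bound
\[
|u(r)|\le \int_r^1|u'|\le \|u'\|_{L^2(r^{N-1}dr)}\Big(\int_r^1 s^{1-N}ds\Big)^{1/2}.
\]
Near $r=1/2$ this yields $|u(r)|\le C\|Du\|_2$, so
\[
\int_B V_\alpha|u|^{p+1}\le C\|Du\|_2^{p+1}\int_0^1 r^{N-1}V_\alpha(r)\,dr+(\text{contributions away from }1/2).
\]
The weight integral is $\sim\alpha^{-1/2}$ by Laplace's method. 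Away from $1/2$, $V_\alpha$ is exponentially small in $\alpha$, except near $r=0$, where $u$ may blow up like $r^{(2-N)/2}$. There we bound $|u|^{p+1}\le C r^{-(N-2)(p+1)/2}\|Du\|^{p+1}$ and use $V_\alpha\le (4r)^\alpha$. The resulting integral is $O(4^{-\alpha}$-type$)$ as soon as $\alpha$ exceeds $(N-2)(p+1)/2-N$, which holds for $\alpha$ large. Altogether
\[
S_{\alpha,\mathrm{rad}}\ge c\,\alpha^{\frac{1}{p+1}}\longrightarrow\infty,
\]
while $S_\alpha\to0$. In particular $S_\alpha<S_{\alpha,\mathrm{rad}}$ for large $\alpha$, which proves the theorem.

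The concentration estimate is routine. The main care lies in making the radial lower bound uniform in $\alpha$, namely in controlling the region near the origin. The one-dimensional nature of radial functions near the sphere $|x|=1/2$ is what makes $S_{\alpha,\mathrm{rad}}$ grow while $S_\alpha$ decays. The restriction $p>\frac{N}{N-2}$ in the statement is not needed in this argument; it may be there to obtain a sharper rate comparison, or to handle the near-origin term with a fixed exponent. I would keep track of where, if anywhere, it enters, for instance when bounding $\int_0 r^{N-1-(N-2)(p+1)/2}(4r)^\alpha\,dr$ uniformly.
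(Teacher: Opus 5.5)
There is a genuine gap: a sign error in your concentration estimate. Your bound $S_\alpha\le C\alpha^{\frac{N}{p+1}-\frac{N-2}{2}}$ is correct. It is exactly the paper's rate $\alpha^{1+\frac{N}{p+1}-\frac N2}$, obtained with the same $\alpha^{-1/2}$-scaled bump at a point of $|x|=1/2$. But its exponent is \emph{positive} in the subcritical range: $\frac{N}{p+1}<\frac{N-2}{2}$ holds iff $p+1>2^*$, not $p+1<2^*$. In fact $S_\alpha$ can never tend to $0$. Since $0\le V_\alpha\le 1$ on $B$, one has $S_\alpha\ge S>0$ for all $\alpha$, and $S_\alpha$ is even nondecreasing in $\alpha$. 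So your final step ``$S_{\alpha,\mathrm{rad}}\to\infty$ while $S_\alpha\to 0$'' is false. What you actually have are two quantities that both grow: $S_\alpha\lesssim\alpha^{\frac{N}{p+1}-\frac{N-2}{2}}$ and $S_{\alpha,\mathrm{rad}}\gtrsim\alpha^{\frac{1}{p+1}}$.

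To conclude $S_\alpha<S_{\alpha,\mathrm{rad}}$ for large $\alpha$ you must compare these exponents, and
\[
\frac{1}{p+1}>\frac{N}{p+1}-\frac{N-2}{2}\iff\frac{N-1}{p+1}<\frac{N-2}{2}\iff p>\frac{N}{N-2}.
\]
So the hypothesis you called unnecessary is precisely the one that closes the argument. For $p\le\frac{N}{N-2}$ the two bounds are inconclusive, which is why the paper lists that range as an open problem.

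Once this is corrected, your proof is the paper's. The lower bound $S_{\alpha,\mathrm{rad}}\ge\mathcal{C}_1\alpha^{\frac{1}{p+1}}$ comes from the radial lemma and Laplace-type asymptotics of the weight. Your splitting into a neighbourhood of $r=1/2$ and of the origin is fine, though the paper simply integrates $|x|^{-\frac{(p+1)(N-2)}{2}}V_\alpha$ over all of $B$. Then $C_\alpha<C_{\alpha,\mathrm{rad}}$ follows from the relation between the $S$-levels and the energy levels.
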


	Thus, ground state solutions of problem \eqref{problema-principal} are not, in general, radially symmetric. Nevertheless, as previously observed, they exhibit foliated Schwarz symmetry, which, together with the following additional information on their geometric structure, allows for a more precise description of their shape.

	\begin{teo}\label{teorema solucoes decrescem fora da bola de raio meio}
		For $\alpha>0$, $N\geq 1$ and $1<p$, let $u$ be a $C^2(\overline{B})$ positive solution of problem \eqref{problema-principal}. Then $u$ is strictly radially decreasing in $B\backslash B_{\frac{1}{2}}(0)$ and $u$ attains its maximum in $B_{\frac{1}{2}}(0)$.
	\end{teo}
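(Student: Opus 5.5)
The natural tool is the moving plane method of Gidas--Ni--Nirenberg, in the version for nonlinearities depending on $|x|$. It applies only in the region where the weight $V_\alpha(|x|)=(4|x|(1-|x|))^\alpha$ is radially nonincreasing, namely $|x|\ge \tfrac12$. Write $f(x,u)=V_\alpha(|x|)u^p$. For a unit direction $e$ and $\lambda\in(0,1)$ set
\[
T_\lambda=\{x\cdot e=\lambda\},\qquad \Sigma_\lambda=\{x\in B:\ x\cdot e>\lambda\},
\]
let $x^\lambda$ be the reflection of $x$ across $T_\lambda$, and put $w_\lambda(x)=u(x^\lambda)-u(x)$ on $\Sigma_\lambda$. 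The aim is to show that $w_\lambda>0$ in $\Sigma_\lambda$ for every $\lambda\in[\tfrac12,1)$, and also in the limiting position $\lambda=\tfrac12$. From this, $\partial_e u<0$ on $\{x\cdot e>\tfrac12\}\cap B$ (for $\lambda>\tfrac12$ this uses the Hopf lemma on $T_\lambda$). Applying this with $e=x/|x|$ at any point with $|x|>\tfrac12$ gives $\partial_r u(x)<0$, which is strict radial decrease in $B\setminus B_{1/2}(0)$.

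The key structural point is the comparison of the weights. If $x\in\Sigma_\lambda$ with $\lambda\ge\tfrac12$, then $x^\lambda\in B$ and $|x^\lambda|<|x|$, since reflection moves $x$ toward the origin. Moreover $|x|>\lambda\ge\tfrac12$, while $|x^\lambda|$ could be smaller than $\tfrac12$. To get $V_\alpha(|x^\lambda|)\ge V_\alpha(|x|)$ I use the symmetry $V_\alpha(r)=V_\alpha(1-r)$. Since $V_\alpha$ is increasing on $[0,\tfrac12]$ and decreasing on $[\tfrac12,1]$, the inequality $V_\alpha(|x^\lambda|)\ge V_\alpha(|x|)$ is equivalent to $1-|x|\le|x^\lambda|$ when $|x^\lambda|\le\tfrac12$. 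This holds because $|x^\lambda|+|x|\ge |x^\lambda\cdot e|+x\cdot e = |2\lambda-x\cdot e| + x\cdot e\ge 2\lambda\ge1$. Consequently, in $\Sigma_\lambda$,
\[
-\Delta w_\lambda = V_\alpha(|x^\lambda|)u(x^\lambda)^p - V_\alpha(|x|)u(x)^p \ \ge\ V_\alpha(|x|)\bigl(u(x^\lambda)^p-u(x)^p\bigr) = c_\lambda(x)\,w_\lambda,
\]
with $c_\lambda$ bounded, because $u\in C^2(\overline B)$. On the boundary, $w_\lambda\ge0$ on $\partial\Sigma_\lambda$: it vanishes on $T_\lambda$, and $w_\lambda=u(x^\lambda)>0$ on $\partial B\cap\partial\Sigma_\lambda$ when $\lambda>0$, since $x^\lambda$ is interior.

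The moving-plane scheme then runs in the standard way. For $\lambda$ close to $1$, $\Sigma_\lambda$ is narrow, and the maximum principle for narrow domains gives $w_\lambda\ge0$. The strong maximum principle upgrades this to $w_\lambda>0$, because $w_\lambda\not\equiv0$ by its positivity on $\partial B$. Let $\bar\lambda=\inf\{\mu\ge\tfrac12:\ w_\lambda>0 \text{ in }\Sigma_\lambda\ \forall\lambda\in[\mu,1)\}$ and suppose $\bar\lambda>\tfrac12$. Continuity gives $w_{\bar\lambda}\ge0$, hence $w_{\bar\lambda}>0$. The usual compactness argument, combining the narrow-region maximum principle near $T_{\bar\lambda}$ and $\partial B$ with uniform positivity on a compact subset, then yields $w_\lambda>0$ for $\lambda$ slightly below $\bar\lambda$, a contradiction. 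So the plane reaches $\lambda=\tfrac12$. The main obstacle is exactly the weight comparison above: the moving planes method is only valid while the reflected weight dominates, and I expect the careful part to be verifying this for the whole range $\lambda\ge\tfrac12$, including reflected points landing inside $B_{1/2}$. The rest is routine, using the version of the narrow-domain maximum principle in Berestycki--Nirenberg form, which handles the nonsmooth corners of $\Sigma_\lambda$.

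For the last assertion, take any maximum point $x_0$ of $u$ in $\overline B$. Since $u=0$ on $\partial B$ and $u>0$, $x_0$ is interior. If $|x_0|\ge\tfrac12$, choose $e=x_0/|x_0|$. When $|x_0|>\tfrac12$, we have $\partial_e u(x_0)<0$, contradicting maximality. When $|x_0|=\tfrac12$, the positivity $w_{1/2}>0$ gives $u(x)<u(x^{1/2})$ for points just outside $x_0$ along $e$, and strict decrease along the ray also holds; it is cleanest to use the Hopf lemma for $w_{1/2}$ on $T_{1/2}$. This gives $\partial_e w_{1/2}(x_0)>0$, i.e. $-2\partial_e u(x_0)>0$, again contradicting $\nabla u(x_0)=0$. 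Hence every maximum point lies in $B_{1/2}(0)$.
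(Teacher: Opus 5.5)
Your proposal is correct and follows essentially the paper's route. Both run moving planes from $\lambda=1$ down to $\lambda=\tfrac12$, and the only problem-specific input is the weight comparison $V_\alpha(|x^\lambda|)\ge V_\alpha(|x|)$ on $\Sigma_\lambda$ for $\lambda\ge\tfrac12$, which is condition (c) of Gidas--Ni--Nirenberg's Theorem 2.1'. The paper checks this via the identity $|x_\lambda|(1-|x_\lambda|)-|x|(1-|x|)=|x_\lambda|-|x|+4\lambda(x_1-\lambda)$ and the triangle inequality, then cites that theorem; you instead use $V_\alpha(r)=V_\alpha(1-r)$ with $|x^\lambda|+|x|\ge 2\lambda$ and carry out the scheme, including the Hopf step on $T_{1/2}$, by hand.
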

	
	Concerning ground state radial solutions, we obtain a sharp characterization of their asymptotic behavior as $\alpha \to \infty$. Set
	\begin{equation*}
		\tilde{C} = \begin{cases}
			\left(\frac{2^N(N-2)}{\sqrt{\pi}(2^{N-2}-1)}\right)^{\frac{1}{p-1}} &\text{if } N> 2,\\
			\left(\frac{4}{\sqrt{\pi}\ln(2)}\right)^{\frac{1}{p-1}}&\text{if } N=2,\\
			\left(\frac{4}{\sqrt{\pi}}\right)^{\frac{1}{p-1}}&\text{if } N=1
		\end{cases}
		\quad \text{and} \quad
		\eta = \begin{cases}
			\left(\frac{2^N}{\sqrt{\pi}}\left(\frac{N-2}{2^{N-2}-1}\right)^p \right)^{\frac{1}{p-1}} &\text{if }  N>2,\\
			\left(\frac{4}{\sqrt{\pi}\ln(2)^p}\right)^{\frac{1}{p-1}}&\text{if }  N=2,\\
			\left(\frac{2^{p+1}}{\sqrt{\pi}}\right)^{\frac{1}{p-1}}&\text{if } N=1.
		\end{cases}
	\end{equation*}
	
	\begin{teo}(Asymptotic behavior of ground state radial solutions)\label{teorema comport assintotico sol radial} Let $N\geq 1$ and $1<p<2^*_{\alpha}-1$. Then \begin{equation}\label{teoderivada1}\lim_{\alpha\rightarrow\infty}-\frac{u'_{\alpha,rad}(1)}{\alpha^{\frac{1}{2(p-1)}}}= \eta .\end{equation}
		Moreover, as $\alpha\rightarrow\infty$, 
		\begin{itemize}
			\item[i)] For $t\in[0,\frac{1}{2}]$ \[ \frac{u_{\alpha,rad}(t)}{\alpha^{\frac{1}{2(p-1)}}} \rightarrow \tilde{C}, \]
			
			\item[ii)] For $t\in[\frac{1}{2},1]$
			\[ \frac{u_{\alpha,rad}(t)}{\alpha^{\frac{1}{2(p-1)}}} \rightarrow \begin{cases}
				\frac{\eta}{N-2}\left(\frac{1}{t^{N-2}}-1\right)\quad &\text{if }N>2,\\
				-\eta\ln(t)\quad &\text{if }  N=2,\\
				\eta(1-t)\quad &\text{if } N=1.
			\end{cases} \]	 	
		\end{itemize}
		In addition, the convergence of $\dfrac{u_{\alpha,rad}(t)}{\alpha^{\frac{1}{2(p-1)}}}$ is uniform on $[0,1]$.
		
	\end{teo}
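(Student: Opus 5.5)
The plan is to rescale and pass to a limit problem in which the weight acts as a Dirac mass on the sphere $|x|=\tfrac12$. The basic observation is a Laplace-type asymptotic. Writing $4r(1-r)=1-(2r-1)^2$, we have $V_\alpha(r)=\exp\bigl(\alpha\ln(1-(2r-1)^2)\bigr)\le e^{-\alpha(2r-1)^2}$. Moreover, for every continuous $g$ on $[0,1]$,
\[
\sqrt{\alpha}\int_0^1 g(s)V_\alpha(s)\,ds\longrightarrow \frac{\sqrt{\pi}}{2}\,g(\tfrac12),
\]
uniformly for $g$ in sets that are bounded and equicontinuous near $\tfrac12$. Away from $\tfrac12$ the weight is $O(e^{-c\alpha\delta^2})$.

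Set $w_\alpha=\alpha^{-\frac{1}{2(p-1)}}u_{\alpha,rad}$. Then \eqref{eq: green function integral representation} becomes
\[
w_\alpha(t)=\int_0^1 s^{N-1}G_N(t,s)\,\sqrt{\alpha}\,V_\alpha(s)\,w_\alpha^p(s)\,ds .
\]
Since $(r^{N-1}w_\alpha')'\le 0$ and $w_\alpha'(0)=0$, the function $w_\alpha$ is nonincreasing. Therefore $\|w_\alpha\|_\infty=w_\alpha(0)$.

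\textbf{Step 1 (energy asymptotics and a priori bounds).} I would show that
\[
\alpha^{\frac{1}{p+1}\cdot\frac12}\,S_{\alpha,rad}\to \Lambda:=\frac{\operatorname{cap}}{\bigl(|\mathbb S^{N-1}|2^{1-N}\tfrac{\sqrt\pi}{2}\bigr)^{2/(p+1)}},
\]
where $\operatorname{cap}=\inf\{\int_B|Du|^2:\ u\in H^1_{0,rad}(B),\ u(\tfrac12)=1\}$. This infimum is attained by $\phi$, which equals $1$ on $B_{1/2}$ and equals $G_N(t,\tfrac12)^{-1}G_N(t,\cdot)$-type radial harmonic function on the annulus, i.e.\ $\phi(t)=G_N(t,\tfrac12)/G_N(\tfrac12,\tfrac12)$.

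\emph{Upper bound.} Plug $\phi$ into \eqref{minimizacao nas radiais} and apply the Laplace asymptotic.

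\emph{Lower bound.} Take any radial $u$. By monotonicity and the Laplace asymptotic, $\int_B V_\alpha|u|^{p+1}\le(1+o(1))\,|\mathbb S^{N-1}|2^{1-N}\tfrac{\sqrt\pi}{2\sqrt\alpha}\,\max_{|s-1/2|<\delta}|u(s)|^{p+1}$, up to an exponentially small error. The radial Sobolev/Strauss estimate $|u(s)|\le C_\delta\|Du\|_2$ for $s\ge\tfrac14$ controls that error. Finally, $\max_{|s-1/2|<\delta}|u|$ is compared with $u(\tfrac12)$ through $\int|Du|^2$: the Cauchy--Schwarz oscillation bound on $[\tfrac12-\delta,\tfrac12+\delta]$ is $O(\sqrt\delta)\|Du\|_2$.

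Via \eqref{eq util relacao entre S _alpha,rad e C_alpha,rad}, this gives $\int_B|Dw_\alpha|^2=O(1)$ and $w_\alpha(\tfrac12)$ bounded above and away from $0$.

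\textbf{Step 2 (uniform bound and flatness on $[0,\tfrac12-\delta]$).} I expect this to be the main obstacle: controlling $M_\alpha=w_\alpha(0)$, which a priori might blow up inside $B_{1/2}$. I would use
\[
r^{N-1}w_\alpha'(r)=-\int_0^r s^{N-1}\sqrt\alpha V_\alpha w_\alpha^p\,ds .
\]
For $r\le\tfrac12-\delta$ this gives $|w_\alpha'|\le C\sqrt\alpha\,e^{-c\alpha\delta^2}M_\alpha^p$, so $M_\alpha\le w_\alpha(\tfrac12-\delta)+e^{-c'\alpha}M_\alpha^p$. On $[\tfrac12-\delta,\tfrac12]$ I would use the energy bound: $w_\alpha(\tfrac12-\delta)\le w_\alpha(\tfrac12)+C\sqrt\delta$. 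A continuity argument in $\alpha$ closes this. Concretely, either $M_\alpha$ stays bounded, or $M_\alpha^{p-1}\ge e^{c'\alpha}/2$. The second alternative contradicts the energy bound, since $\int_B|Dw_\alpha|^2$ controls $w_\alpha$ in $L^2(B_{1/2})$, and by flatness $w_\alpha\approx M_\alpha$ on most of $B_{1/2-\delta}$.

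It follows that $w_\alpha$ is bounded and equicontinuous on $[0,1]$. The derivative bounds are $O(1)$ away from $\tfrac12$, and on compact subsets of $(0,1]$ the bound $|r^{N-1}w'_\alpha|\le \int s^{N-1}\sqrt\alpha V_\alpha w_\alpha^p=O(1)$ holds. Moreover $w_\alpha$ is nearly constant on $[0,\tfrac12]$.

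\textbf{Step 3 (identification of the limit).} By Arzelà--Ascoli, along subsequences $w_\alpha\to w$ uniformly. Passing to the limit in the representation with the Laplace asymptotic gives
\[
w(t)=2^{1-N}\tfrac{\sqrt\pi}{2}\,G_N(t,\tfrac12)\,w(\tfrac12)^p .
\]
By Step 1, $w(\tfrac12)>0$. Evaluating at $t=\tfrac12$ gives $w(\tfrac12)^{p-1}=2^N/(\sqrt\pi\,G_N(\tfrac12,\tfrac12))$, which equals $\tilde C^{\,p-1}$ in each of the cases $N>2$, $N=2$, $N=1$. Since $G_N(t,\tfrac12)=G_N(\tfrac12,\tfrac12)$ for $t\le\tfrac12$, this yields i). For $t\ge\tfrac12$ it gives ii) with constant $2^{-N}\sqrt\pi\,\tilde C^p=\eta$; the identity $\eta^{p-1}=\tilde C^{p-1}\bigl((N-2)/(2^{N-2}-1)\bigr)^{p-1}$, and its $N=1,2$ analogues, checks this.

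Uniqueness of the limit makes the whole family converge, uniformly on $[0,1]$. Finally,
\[
-w_\alpha'(1)=\int_0^1 s^{N-1}\sqrt\alpha V_\alpha w_\alpha^p\,ds\to 2^{1-N}\tfrac{\sqrt\pi}{2}\tilde C^p=\eta,
\]
which is \eqref{teoderivada1}.
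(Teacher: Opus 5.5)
\textbf{Gap in Step 2.} This is the step you yourself single out as the crux, and as written it does not close. From $M_\alpha\le A+e^{-c'\alpha}M_\alpha^p$ you only obtain a dichotomy, and neither way you propose to finish works.

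First, a continuity argument in $\alpha$ is not available. $u_{\alpha,rad}$ is just some minimizer, and neither uniqueness nor continuous dependence on $\alpha$ is known.

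Second, the second alternative is not contradicted by ``flatness on most of $B_{1/2-\delta}$''. In that alternative $M_\alpha^{p-1}e^{-c'\alpha}\ge\frac12$, so your estimate $M_\alpha-w_\alpha(r)\le M_\alpha\cdot M_\alpha^{p-1}e^{-c'\alpha}$ gives nothing. That alternative is only a lower bound on $M_\alpha$, and a larger $M_\alpha$ makes flatness on any fixed ball worse, not better.

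What is missing is a flat region whose radius depends on $M_\alpha$, together with a check that this radius does not shrink too fast. This is exactly where the vanishing of $V_\alpha$ at the origin, the mechanism behind $p<2^*_\alpha-1$, has to enter. For instance:
\begin{itemize}
\item $V_\alpha(s)\le(4s)^\alpha$ and $w_\alpha\le M_\alpha$ give $M_\alpha-w_\alpha(r)\le M_\alpha^p\sqrt{\alpha}\,4^\alpha r^{\alpha+2}/((N+\alpha)(\alpha+2))$.
\item With $r_\alpha=\frac14 M_\alpha^{-(p-1)/\alpha}$ (assuming $M_\alpha\ge 1$), this yields $w_\alpha\ge M_\alpha/2$ on $B_{r_\alpha}$.
\item Poincar\'e's inequality and your energy bound $\int_B|Dw_\alpha|^2=O(1)$ then give $M_\alpha^{2-N(p-1)/\alpha}\le C$.
\end{itemize}
Hence $M_\alpha=O(1)$ for $\alpha$ large. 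With this repair, Step 2 becomes a short, dimension-independent substitute for the paper's blow-up argument. The paper uses an auxiliary dimension $M$ for $N>2$ and Gagliardo--Nirenberg plus Morrey for $N=2$.

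\textbf{The rest of the proposal.} Apart from this, your route is correct and genuinely different from the paper's.

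Step 1 obtains the sharp limit $C$ of $S_{\alpha,rad}\alpha^{-1/(p+1)}$ directly from the capacity of the sphere $\{|x|=\frac12\}$. In the paper, the lower bound via radial lemmas is not sharp for $N\neq 2$, and $C$ is recovered only after the profile is known.

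Step 3 identifies $\tilde C$ and $\eta$ at once by passing to the limit in the Green representation. This replaces several pieces of the paper's argument: the change of variables $f$, the piecewise-linear limit on $(0,\infty)$ glued by continuity, the energy identity, the Weierstrass-type lemma for $-u'(1)$, and the separate treatment of $t=0$. Your uniform Lipschitz bound on $[0,1]$ gives uniform convergence on $[0,1]$ directly.

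\textbf{Two small corrections.}
\begin{itemize}
\item The normalization in Step 1 should read $\alpha^{-\frac{1}{p+1}}S_{\alpha,rad}\to\Lambda$.
\item In the lower bound, the region $[0,\frac14]$ must also be handled. Use the singular radial bound $|u(s)|\le C s^{(2-N)/2}\|Du\|_2$ (its logarithmic analogue for $N=2$) together with $V_\alpha(s)\le (4s(1-s))^\alpha$; this makes that contribution exponentially small in $\alpha$.
\end{itemize}
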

	
	The convergence stated in Theorem \ref{teorema comport assintotico sol radial} can be reformulated in terms of the values of $G_N(t,1/2)$.

	\begin{corl}\label{corl: escrevendo a convg em termos da funcao de green}
		Let $N\geq 1$ and $1<p<2^*_{\alpha}-1$. Then, as $\alpha\rightarrow \infty$, it holds that
		
		\begin{equation}\label{limitGreen}
			\frac{u_{\alpha,rad}}{\alpha^{\frac{1}{2(p-1)}}}(t) \rightarrow \eta \,G_N\!\left(t,\frac{1}{2}\right)  \quad \text{uniformly on} \quad [0,1].
		\end{equation}
	\end{corl}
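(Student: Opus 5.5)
The corollary is a restatement of Theorem \ref{teorema comport assintotico sol radial}. The plan is to compare the limit profile given there with $\eta\, G_N(t,1/2)$, one dimension at a time; no new analysis of $u_{\alpha,rad}$ is needed. Since Theorem \ref{teorema comport assintotico sol radial} already asserts that $u_{\alpha,rad}/\alpha^{\frac{1}{2(p-1)}}$ converges uniformly on $[0,1]$ to the piecewise limit in items i) and ii), it suffices to show that this piecewise function coincides with $t\mapsto \eta\, G_N(t,1/2)$ on $[0,1]$.

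The comparison uses the explicit Green's functions with $s=1/2$. The region $t\ge 1/2$ is the case $s\le t$, so:
\begin{itemize}
\item for $N>2$, $G_N(t,1/2)=\frac{1}{N-2}\bigl(t^{2-N}-1\bigr)$;
\item for $N=2$, $G_N(t,1/2)=-\ln t$;
\item for $N=1$, $G_N(t,1/2)=1-t$.
\end{itemize}
Multiplied by $\eta$, these are exactly the limits in item ii).

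For $t\in[0,1/2]$ we have $t\le s$, so $G_N(t,1/2)$ is the constant:
\begin{itemize}
\item for $N>2$, $\frac{2^{N-2}-1}{N-2}$;
\item for $N=2$, $\ln 2$;
\item for $N=1$, $1/2$.
\end{itemize}
It remains to check $\eta\, G_N(t,1/2)=\tilde C$ on this interval, using the definitions of $\eta$ and $\tilde C$.
\begin{itemize}
\item For $N>2$, set $a=\frac{N-2}{2^{N-2}-1}$. Then $\eta=\bigl(\frac{2^N}{\sqrt\pi}a^p\bigr)^{\frac{1}{p-1}}$, so
\[
\frac{\eta}{a}=\Bigl(\frac{2^N}{\sqrt\pi}\,a^{p}a^{-(p-1)}\Bigr)^{\frac{1}{p-1}}=\Bigl(\frac{2^N a}{\sqrt\pi}\Bigr)^{\frac{1}{p-1}}=\tilde C .
\]
\item For $N=2$, $\eta\ln 2=\bigl(\frac{4}{\sqrt\pi}\ln(2)^{-p}\ln(2)^{p-1}\bigr)^{\frac{1}{p-1}}=\bigl(\frac{4}{\sqrt\pi\ln 2}\bigr)^{\frac{1}{p-1}}=\tilde C$.
\item For $N=1$, $\eta/2=\bigl(2^{p+1}2^{-(p-1)}/\sqrt\pi\bigr)^{\frac{1}{p-1}}=\bigl(4/\sqrt\pi\bigr)^{\frac{1}{p-1}}=\tilde C$.
\end{itemize}

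The two expressions also agree at $t=1/2$, so the limit function is continuous and equals $\eta\, G_N(\cdot,1/2)$ on all of $[0,1]$. The uniform convergence then transfers directly from Theorem \ref{teorema comport assintotico sol radial}, which gives \eqref{limitGreen}. The only step needing any care is the exponent bookkeeping in $\eta\,G_N(t,1/2)=\tilde C$.
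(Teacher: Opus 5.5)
Your proposal is correct and follows essentially the same route as the paper: the corollary is obtained by evaluating $\eta\,G_N(t,\frac{1}{2})$ separately on $[0,\frac{1}{2}]$ and $[\frac{1}{2},1]$, then invoking the uniform convergence already asserted in Theorem \ref{teorema comport assintotico sol radial}. Your checks that $\eta\,G_N(t,\frac{1}{2})=\tilde C$ for $t\le\frac{1}{2}$ in each dimension are accurate, and they supply the computation the paper leaves implicit.
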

	
	\medbreak
	\begin{remk} 
		The convergence results in Theorem \ref{teorema comport assintotico sol radial} and Corollary \ref{corl: escrevendo a convg em termos da funcao de green} deserve some further discussion. It is worth emphasizing that the maximum of the weight appearing in \eqref{problema-principal} is attained at $|x|=1/2$, which plays a central role in the asymptotic behavior of ground state radial solutions. In particular, the normalized function $u_{\alpha,\mathrm{rad}}(t)\tilde{C}^{-1}\alpha^{-\frac{1}{2(p-1)}}$ becomes asymptotically flat on $[0,1/2]$ as $\alpha \to \infty$.
		A similar phenomenon has been observed in the context of the Hénon equation, where the weight attains its maximum at $|x|=1$, and the corresponding normalized ground state radial solutions become flat uniformly on any compact subset of $B$; see \cite[Theorem~2.6(ii)]{LeitedaSilva2021}. In the present setting, the main additional feature lies in the description of the behavior on the interval $[1/2,1]$, which is fully captured by formula \eqref{limitGreen} and illustrated by the  numerical results and Figure \ref{fig:solucao diferentes alphas} at Section \ref{sec: analise numerica}.
	\end{remk}
	
	As a consequence of Theorem \ref{teorema comport assintotico sol radial} we also characterize $S_{\alpha,rad}$.
	\begin{teo}\label{teo caracterizacao de S alpha rad}
		Let $N\geq 1$ and $1<p<2_{\alpha}^*-1$. Then
		\begin{equation*}
			\lim_{\alpha\rightarrow \infty}\frac{S_{\alpha,rad}}{\alpha^{\frac{1}{p+1}}} = C
		\end{equation*}
		where
		\begin{equation*}
			C = \begin{cases}
				\left(\frac{N-2}{2^{N-2}-1}\right)\left(\frac{4^N}{\pi}|\mathbb{S}^{N-1}|^{p-1}\right)^{\frac{1}{p+1}},\quad &\text{if } N>2\\
				\frac{1}{\ln(2)}\left(\frac{16}{\pi}(2\pi)^{p-1}\right)^{\frac{1}{p+1}},\quad &\text{if } N=2\\
				\frac{4}{\pi^{\frac{1}{p+1}}},\quad&\text{if }N=1.
			\end{cases}
		\end{equation*}
	\end{teo}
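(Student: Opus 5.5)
The plan is to reduce $S_{\alpha,rad}$ to a one-dimensional weighted integral of $u_{\alpha,rad}^{p+1}$ and evaluate it with the uniform convergence of Theorem \ref{teorema comport assintotico sol radial} (in the form of Corollary \ref{corl: escrevendo a convg em termos da funcao de green}) together with a Laplace-type asymptotic for $\int_0^1 V_\alpha$. Note first that for fixed $p>1$ the condition $p<2^*_\alpha-1$ holds for all large $\alpha$, so the statement is meaningful.

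\emph{Step 1 (energy identity).} Recall $u_{\alpha,rad}=S_{\alpha,rad}^{1/(p-1)}U_{\alpha,rad}$ with $\int_B V_\alpha U_{\alpha,rad}^{p+1}=1$ and $\int_B|DU_{\alpha,rad}|^2=S_{\alpha,rad}$. Testing the equation with $u_{\alpha,rad}$ gives
\[
\int_B V_\alpha u_{\alpha,rad}^{p+1}\,dx=\int_B|Du_{\alpha,rad}|^2\,dx=S_{\alpha,rad}^{\frac{2}{p-1}}S_{\alpha,rad}=S_{\alpha,rad}^{\frac{p+1}{p-1}} .
\]
In polar coordinates the left-hand side equals $|\mathbb{S}^{N-1}|\int_0^1 r^{N-1}V_\alpha(r)u_{\alpha,rad}^{p+1}(r)\,dr$.

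\emph{Step 2 (Laplace asymptotics).} Set $w_\alpha=u_{\alpha,rad}\,\alpha^{-\frac{1}{2(p-1)}}$. By Corollary \ref{corl: escrevendo a convg em termos da funcao de green}, $w_\alpha\to \eta G_N(\cdot,1/2)$ uniformly on $[0,1]$. This limit is continuous and its value at $1/2$ is $\tilde C$; one checks this directly from the formulas for $\eta$ and $\tilde C$, or from item i) of Theorem \ref{teorema comport assintotico sol radial}. Next, $\int_0^1 V_\alpha\,dr=4^\alpha B(\alpha+1,\alpha+1)\sim \frac{\sqrt\pi}{2\sqrt\alpha}$ by Stirling's formula. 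Moreover, $V_\alpha/\int_0^1V_\alpha$ is an approximate identity concentrating at $r=1/2$: for any $\delta>0$, on $|r-1/2|\ge\delta$ we have $V_\alpha\le(1-4\delta^2)^\alpha$, which is exponentially small compared with $\alpha^{-1/2}$. Since $r^{N-1}w_\alpha^{p+1}$ is uniformly bounded and converges uniformly to a continuous limit, splitting the integral at $|r-1/2|=\delta$ gives
\[
\int_0^1 r^{N-1}V_\alpha w_\alpha^{p+1}\,dr \sim 2^{1-N}\tilde C^{p+1}\frac{\sqrt\pi}{2\sqrt\alpha}.
\]

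\emph{Step 3 (conclusion).} Combining Steps 1 and 2,
\[
S_{\alpha,rad}^{\frac{p+1}{p-1}}\sim |\mathbb{S}^{N-1}|\,2^{-N}\sqrt\pi\,\tilde C^{p+1}\,\alpha^{\frac{p+1}{2(p-1)}-\frac12}= |\mathbb{S}^{N-1}|\,2^{-N}\sqrt\pi\,\tilde C^{p+1}\,\alpha^{\frac{1}{p-1}} .
\]
Raising both sides to the power $\frac{p-1}{p+1}$ yields $S_{\alpha,rad}\alpha^{-\frac{1}{p+1}}\to \big(|\mathbb{S}^{N-1}|2^{-N}\sqrt\pi\,\tilde C^{p+1}\big)^{\frac{p-1}{p+1}}$. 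It remains to check that this equals $C$. For $N>2$, write $k=\frac{N-2}{2^{N-2}-1}$, so that $\tilde C^{p-1}=2^Nk/\sqrt\pi$. Then the $(p+1)$-th power of the limit is
\[
|\mathbb{S}^{N-1}|^{p-1}2^{-N(p-1)}\pi^{\frac{p-1}{2}}\left(\frac{2^Nk}{\sqrt\pi}\right)^{p+1}=k^{p+1}\frac{4^N}{\pi}|\mathbb{S}^{N-1}|^{p-1},
\]
which is exactly $C^{p+1}$. For $N=2$ one uses $|\mathbb{S}^1|=2\pi$, and for $N=1$ one uses $|\mathbb{S}^0|=2$; in both cases the same computation gives the stated constants.

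The only genuinely analytic step is Step 2, the concentration argument. Its difficulty is mild, because the uniform convergence on all of $[0,1]$ is already provided by the earlier theorem. The main care needed is the sharp Beta-function asymptotic $\int_0^1V_\alpha\sim\frac{\sqrt{\pi}}{2\sqrt\alpha}$ and the bookkeeping of the constants in Step 3.
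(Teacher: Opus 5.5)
Your proof is correct, but it extracts $C$ from the other half of the Nehari identity than the paper does.

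\textbf{The paper's route.} The paper never evaluates the potential term. Its proof of Theorem \ref{teo caracterizacao de S alpha rad} reads $C$ off relation \eqref{eq relacao entre eta e a cte de S_alpha,rad} and its analogues for $N=1,2$. That relation is obtained inside the proof of Theorem \ref{teorema comport assintotico sol radial}:
\begin{itemize}
\item it passes to a subsequence with $S_{\alpha_n,rad}\alpha_n^{-\frac{1}{p+1}}\to C$, which Theorem \ref{teorema estimativa sharp expoente de S alpha,rad em relacao a alpha} makes available;
\item it proves $\int_0^\infty|w'_{\alpha_n}|^2\,dt\to\eta^2\,G_N(\tfrac12,\tfrac12)$ for the transformed functions $w_{\alpha_n}=\alpha_n^{-\frac{1}{2(p-1)}}u_{\alpha_n,rad}\circ f$.
\end{itemize}
This step needs convergence of the derivatives away from the concentration point, the tail bound \eqref{funcao L1 que limita a derivada no intervalo nao limitado} on an unbounded interval, and dominated convergence. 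It yields $C^{\frac{p+1}{p-1}}=|\mathbb{S}^{N-1}|\,\eta^2\,G_N(\tfrac12,\tfrac12)$. Since this value does not depend on the subsequence, the full limit follows.

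\textbf{Your route.} You use $\int_B V_\alpha u_{\alpha,rad}^{p+1}\,dx=S_{\alpha,rad}^{\frac{p+1}{p-1}}$ together with a Laplace-type, approximate-identity argument at $r=1/2$, which is essentially the mechanism of Lemma \ref{lemma aprox de weierstrass}. This needs only two inputs: the $L^\infty$ bound of Proposition \ref{prop limitacao por cima norma do sup sol rad}, and uniform convergence of the normalized solutions near $r=1/2$. No information on derivatives is required. You obtain $C^{\frac{p+1}{p-1}}=|\mathbb{S}^{N-1}|\,2^{-N}\sqrt{\pi}\,\tilde C^{p+1}=|\mathbb{S}^{N-1}|\,\eta\,\tilde C$. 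This agrees with the paper's relation because $\eta\,G_N(\tfrac12,\tfrac12)=\tilde C$.

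\textbf{Comparison.} Your argument is more modular and gives the full limit directly, without subsequences. It also treats $N=1$, $N=2$ and $N>2$ in a single computation, with no change of variables. The paper's argument costs nothing extra only because the gradient computation had already been carried out to identify the profile.

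Relying on Corollary \ref{corl: escrevendo a convg em termos da funcao de green} is not circular. There, $\eta$ and $\tilde C$ are determined from $\eta\,G_N(\tfrac12,\tfrac12)=\tilde C$ and $\eta=\frac{\sqrt{\pi}}{2^N}\tilde C^p$ alone, without using the value of $C$.
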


	The paper is organized as follows. In Section \ref{sec: solucoes sao radialmente decescentes fora da bola de raio meio}, Theorem \ref{teorema solucoes decrescem fora da bola de raio meio} is proved using the moving planes method, following the classical approach introduced in \cite{Gidas1979}. Section \ref{sec: estimativas S alpha e S alpha rad} is devoted to the proof of Theorem \ref{teorema quebra de simetria}, based on a series of auxiliary estimates. In Section \ref{sec: estimativas norma L infinito}, a sharp bound on the growth of $|u_{\alpha,\mathrm{rad}}|_{\infty}$ as $\alpha \to \infty$ is established. Section \ref{sec: comportamento assitn solucao radial ground state} contains the proof of Theorem \ref{teorema comport assintotico sol radial} together with its corollaries, describing the asymptotic behavior of ground state radial solutions. Section \ref{sec: open problems} presents some open problems. The paper concludes in  Section \ref{sec: analise numerica}, with some numerical experiments based on the method developed in \cite{Tsitouras2019}.

	\section{Solutions are radially decreasing for $|x|>\frac{1}{2}$}\label{sec: solucoes sao radialmente decescentes fora da bola de raio meio}
	In this section, it is shown that smooth solutions to the problem \eqref{problema-principal} are radially decreasing outside the ball of radius $1/2$, thus proving Theorem~\ref{teorema solucoes decrescem fora da bola de raio meio}. This result follows directly from the moving planes method, as formulated in \cite[Theorem~2.1']{Gidas1979}. As a consequence, it also follows that the maximum of $u$ is attained within the ball $B_{\frac{1}{2}}(0)$.
	
	To maintain consistency with the notation used in \cite{Gidas1979} and to enhance readability, the following definition is introduced:
	\begin{equation*}
		F(x,v,P_1,\ldots,P_N) = \sum_{j=1}^{N} P_j + (4|x|(1-|x|))^\alpha |v|^{p-1}v
	\end{equation*}
	in $B\times [0,\infty) \times \mathbb{R}^N$. Thus solutions of (\ref{problema-principal}) satisfy 
	\begin{equation*}
		F\left(x,u(x),\frac{\partial^2 u}{\partial x_1 ^2}(x),\ldots,\frac{\partial^2 u}{\partial x_N^2}(x)\right)=0 .
	\end{equation*}
	To apply \cite[Theorem~2.1']{Gidas1979}, it suffices to verify that condition~(c) is satisfied, namely
	\begin{equation}\label{eq F(x_lambda,...) >= F(x,...)}
		F(x_\lambda,v,P_1,\ldots,P_N) \geq  F(x,v,P_1,\ldots,P_N),
	\end{equation}
	where $1/2 \leq \lambda \leq 1$, $x_\lambda = (2\lambda -x_1,x_2,\ldots,x_N)$ and for all $\lambda \leq x_1 \leq 1$.  Observe that \eqref{eq F(x_lambda,...) >= F(x,...)} is equivalent to
	\begin{equation}\label{NovaEqF}
		|x_\lambda|(1-|x_\lambda|) \geq |x|(1-|x|)
	\end{equation}
	where $1/2 \leq \lambda \leq 1$ and for all $\lambda \leq x_1 \leq 1$.

	\begin{lema}\label{lemma util para aplicar moving planes}
		Let $ 1/2 \leq\lambda\leq1$. Then,  for $x\in B$ and  $\lambda \leq x_1 \leq 1$, one has 
		\begin{equation*}
			|x_\lambda|(1-|x_\lambda|) - |x|(1-|x|)  =	\sqrt{4\lambda(\lambda-x_1)+ |x|^2 }  + 4\lambda(x_1-\lambda) -|x| \geq 0.
		\end{equation*}That is, \eqref{eq F(x_lambda,...) >= F(x,...)} holds.
	\end{lema}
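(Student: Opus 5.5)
The proof has two parts: first verify the algebraic identity, then prove nonnegativity by reducing it, through the symmetry of $f(r)=r(1-r)$ about $r=1/2$, to one elementary inequality.

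\textbf{The identity.} Put $a=|x|$ and $b=|x_\lambda|$. Since $x_\lambda$ differs from $x$ only in the first coordinate,
\[
b^2=(2\lambda-x_1)^2+a^2-x_1^2=4\lambda(\lambda-x_1)+a^2 .
\]
Hence $b=\sqrt{4\lambda(\lambda-x_1)+|x|^2}$. Expanding $b(1-b)-a(1-a)=b-b^2-a+a^2$ and substituting this value of $b^2$ gives exactly the stated expression $\sqrt{4\lambda(\lambda-x_1)+|x|^2}+4\lambda(x_1-\lambda)-|x|$.

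\textbf{Nonnegativity.} The plan is to avoid manipulating the square root directly. Write
\[
f(b)-f(a)=(a-b)(a+b-1).
\]
Because $x_1\ge\lambda$, we have $b^2\le a^2$, so $a-b\ge 0$. It therefore suffices to show $a+b\ge 1$, i.e. $b\ge 1-a$.

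We have $1-a>0$ since $x\in B$, and also $a\ge x_1\ge\lambda\ge 1/2$. Squaring, the condition $b\ge 1-a$ is equivalent to
\[
a^2-4\lambda(x_1-\lambda)\ge (1-a)^2, \quad\text{that is,}\quad 2a-1\ge 4\lambda(x_1-\lambda).
\]
Using $a\ge x_1$, it is enough to prove
\[
2x_1-1-4\lambda x_1+4\lambda^2\ge 0 .
\]
The left-hand side factors as $(2\lambda-1)(2\lambda+1-2x_1)$. The first factor is nonnegative because $\lambda\ge1/2$. The second is nonnegative because $x_1\le 1$ and $\lambda\ge 1/2$ give $2\lambda+1-2x_1\ge 2\lambda-1\ge 0$. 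This proves $f(b)\ge f(a)$.

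\textbf{Conclusion.} Since $t\mapsto t^\alpha$ is increasing for $\alpha>0$, the inequality $f(b)\ge f(a)$ gives \eqref{NovaEqF}, and hence \eqref{eq F(x_lambda,...) >= F(x,...)}.

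The main obstacle is spotting the factorization $(a-b)(a+b-1)$, which turns the square-root inequality into the simple polynomial condition $b\ge 1-a$. The only delicate point is tracking signs before squaring, namely that $1-a>0$ and $b\ge 0$.
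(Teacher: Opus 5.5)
Your proof is correct, but it takes a different route from the paper's. In your notation, both arguments reduce to showing $a-b\le a^2-b^2$, i.e.\ $(a-b)(a+b-1)\ge 0$.

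The paper proves this in one line with the triangle inequality:
\[
a-b\le |x-x_\lambda| = 2(x_1-\lambda)\le 2\lambda\cdot 2(x_1-\lambda)=a^2-b^2 .
\]
This uses only $2\lambda\ge 1$ and needs no squaring or sign bookkeeping.

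You instead treat the two factors separately. You get $a\ge b$ from $b^2\le a^2$. You prove $a+b\ge 1$ by squaring, which is legitimate since $1-a>0$ and $b\ge 0$, then replacing $a$ by the smaller $x_1$ and factoring $(2\lambda-1)(2\lambda+1-2x_1)$. So you use $|x|\ge x_1$ where the paper uses $|x|-|x_\lambda|\le|x-x_\lambda|$.

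Your route is somewhat longer, but it isolates the geometric content, $1-|x|\le |x_\lambda|\le |x|$: reflecting across the plane moves the radius toward $1/2$. This makes explicit that only two properties of $r(1-r)$ are used, its symmetry about $1/2$ and its monotonicity in $|r-1/2|$. Hence the same lemma holds for any radial weight that is a nonincreasing function of $|r-1/2|$.
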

	\begin{proof}Observe that for all $x \in B$ with $x_1 \geq \lambda \geq 1/2$, $|x_{\lambda}| = \sqrt{4\lambda(\lambda-x_1)+ |x|^2 }$, $|x-x_{\lambda}| = 2(x_1-\lambda)$ and
		\begin{align*}
			\sqrt{4\lambda(\lambda - x_1) + |x|^2} + 4\lambda(x_1 - \lambda) - |x|
			\geq 0
			\;\Longleftrightarrow\;
			4\lambda(x_1 -\lambda) \geq |x| - |x_{\lambda}|.
		\end{align*}
		Then, from the triangular inequality,
		\[
		|x| - |x_{\lambda}| \leq |x-x_{\lambda}| = 2 (x_1-\lambda) \leq 4\lambda(x_1-\lambda),
		\]
		since $x_1 \geq \lambda \geq 1/2$, which completes the proof.
	\end{proof}

	\begin{proof}[\textbf{Proof of Theorem \ref{teorema solucoes decrescem fora da bola de raio meio}}]
		By Lemma \ref{lemma util para aplicar moving planes}, using \cite[Theorem~2.1']{Gidas1979},  it follows that $\frac{\partial u}{\partial x_1}(x)<0$ for all $x \in B$ with $\frac{1}{2} \leq x_1 < 1$. Since the problem is rotationally invariant, the same arguments work for any radial direction, proving that $\frac{\partial u}{\partial r}<0$ in $B\backslash B_{\frac{1}{2}}(0)$. Consequently, the maximum of $u$ is attained at $B_{\frac{1}{2}}(0)$.
	\end{proof}
	
	\section{Estimates on $S_\alpha$ and $S_{\alpha,rad}$}\label{sec: estimativas S alpha e S alpha rad}

	In this section, the growths of $S_\alpha$ and $S_{\alpha,\mathrm{rad}}$ with respect to $\alpha$ are estimated. These estimates play a fundamental role in establishing the symmetry breaking result stated in Theorem~\ref{teorema quebra de simetria}, as well as in describing the asymptotic behavior of $u_{\alpha,\mathrm{rad}}$. The arguments used here are inspired by those developed in \cite[Sec.~2]{Mercuri2019} and \cite[Sec.~6]{SMETS2002}.

	\begin{lema}\label{lemma limitacao de S_alpha por cima em funcao de alpha}
		Let  $N \geq 1$ and $1< p < 2^*-1$, then
		\begin{equation}
			\limsup_{\alpha\rightarrow\infty} S_\alpha \alpha^{\frac{N}{2}-1-\frac{N}{p+1}} \leq e^{\frac{8}{p+1}}S.
		\end{equation}
	\end{lema}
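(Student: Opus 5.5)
Since $S_\alpha$ is an infimum, the upper bound will come from testing the Rayleigh quotient with a well-chosen function: a rescaled minimizer for $S$ concentrated near a point $x_0$ with $|x_0|=1/2$, where $V_\alpha$ attains its maximum value $1$. Let $w\in H^1_0(B)$ be a minimizer for $S$, which exists by Rellich–Kondrachov since $p<2^*-1$. Fix $x_0=(1/2,0,\dots,0)$ and set
\[
w_\alpha(x)=w\bigl(\alpha^{1/2}(x-x_0)/c\bigr)
\]
for a constant $c>0$ to be fixed. Then $w_\alpha$ is supported in $B_{c\alpha^{-1/2}}(x_0)\subset B$ for $\alpha$ large. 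The scaling $\alpha^{-1/2}$ is natural: near $|x|=1/2$ we have $4r(1-r)=1-(2r-1)^2$, so $V_\alpha\approx e^{-\alpha(2r-1)^2}$, and the weight stays of order one on a shell of width $\alpha^{-1/2}$.

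Next I compute both sides of the quotient by changing variables $y=\alpha^{1/2}(x-x_0)/c$. The gradient term gives
\[
\int_B|Dw_\alpha|^2\,dx=(\alpha/c^2)\,(c^2/\alpha)^{N/2}\int_B|Dw|^2 .
\]
For the weighted term I need a lower bound on $V_\alpha$ on the support. For $x\in B_{c\alpha^{-1/2}}(x_0)$ we have $\bigl||x|-1/2\bigr|\le c\alpha^{-1/2}$, hence $4|x|(1-|x|)\ge 1-4c^2/\alpha$. Using $\ln(1-s)\ge -2s$ for small $s$, this gives
\[
V_\alpha\ge (1-4c^2/\alpha)^\alpha \ge e^{-8c^2}
\]
for large $\alpha$. (Letting $\alpha\to\infty$ the limit of $(1-4c^2/\alpha)^\alpha$ is $e^{-4c^2}$, but taking the cruder constant is harmless.) Therefore
\[
\int_B V_\alpha|w_\alpha|^{p+1}\,dx\ \ge\ e^{-8c^2}(c^2/\alpha)^{N/2}\int_B|w|^{p+1}.
\]

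Taking the quotient, the powers of $c$ and $\alpha$ combine to
\[
S_\alpha\le \alpha^{1-\frac N2+\frac N{p+1}}\;c^{N-2-\frac{2N}{p+1}}\;e^{\frac{16c^2}{p+1}}\;S .
\]
To match the stated constant I choose $c$ so that $8c^2=4$, i.e. $c=1/\sqrt2$, since then $e^{16c^2/(p+1)}=e^{8/(p+1)}$. The leftover factor $c^{N-2-2N/(p+1)}$ is not $1$ in general. To handle it, I will either use the sharper limit $e^{-4c^2}$ and pick $c=1$, which gives exactly $e^{8/(p+1)}$ with no leftover power of $c$, or simply note that the stated constant is what results from $c=1$.

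So the final choice is $c=1$. The support then lies in $B_{\alpha^{-1/2}}(x_0)\subset B$, and $\liminf_{\alpha\to\infty}\inf_{\mathrm{supp}} V_\alpha\ge e^{-4}$, giving
\[
\limsup_{\alpha\to\infty} S_\alpha\,\alpha^{\frac N2-1-\frac N{p+1}}\le e^{\frac{8}{p+1}}S .
\]
The only delicate step is this uniform lower bound on $V_\alpha$ over the shrinking ball, which I obtain from $(1-4/\alpha)^\alpha\to e^{-4}$ together with the inequality $\bigl||x|-1/2\bigr|\le|x-x_0|$.
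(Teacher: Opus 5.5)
Your argument is correct and is essentially the paper's proof. With $c=1$ you test $S_\alpha$ with $w(\sqrt{\alpha}(x-x_0))$ supported in $B(x_0,\alpha^{-1/2})$, bound $V_\alpha\ge(1-4/\alpha)^\alpha\to e^{-4}$ there, and let $\alpha\to\infty$. The differences are cosmetic: the paper uses an arbitrary $w\in H^1_0(B)\setminus\{0\}$ and then takes the infimum rather than fixing a minimizer of $S$, and your detour through a general $c$ and $c=1/\sqrt{2}$ can simply be deleted.
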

	\begin{proof} Let $w \in H_0^1(B)\setminus\{0\}$ and set $x_{\frac{1}{2}} = \left(\frac{1}{2},0,\ldots,0\right)$. Define $\omega_\alpha$ by
		\[
		\omega_\alpha(x) = 
		\begin{cases}
			w\big(\sqrt{\alpha}(x - x_{\frac{1}{2}})\big), & x \in B_\alpha,\\
			0, & x \in B \setminus B_\alpha,
		\end{cases}
		\]
		where $B_\alpha = B\left(x_{\frac{1}{2}}, \frac{1}{\sqrt{\alpha}}\right)$. Then $\omega_\alpha \in H_0^1(B)$. Introducing the change of variables $y = \sqrt{\alpha}(x - x_{\frac{1}{2}})$, it follows that, for any $x \in B_\alpha$,
		\[
		\frac{1}{2} - \frac{1}{\sqrt{\alpha}} \leq \left| \frac{y}{\sqrt{\alpha}} + x_{\frac{1}{2}} \right| \leq \frac{1}{2} + \frac{1}{\sqrt{\alpha}}.
		\]
		By changing the variables, one obtains
		\begin{align*}
			S_\alpha \leq \frac{\int_B |D\omega_\alpha|^2(x)dx}{\left(\int_B V_\alpha(|x|)|\omega_\alpha|^{p+1}(x)dx\right)^{\frac{2}{p+1}}} \leq \frac{\alpha^{1+\frac{N}{p+1}-\frac{N}{2}}}{\left(\left(1-\frac{4}{\alpha}\right)^\alpha\right)^{\frac{2}{p+1}}}  \frac{\int_B |Dw|^2(x)dx}{\left(\int_B |w|^{p+1}(x)dx\right)^{\frac{2}{p+1}}}.
		\end{align*}
		Since $\lim_{\alpha \to \infty}\left(1 - \frac{4}{\alpha}\right)^\alpha = e^{-4}$, it follows that
		\[
		\limsup_{\alpha \to \infty} S_\alpha \, \alpha^{\frac{N}{2}-1-\frac{N}{p+1}} 
		\leq e^{\frac{8}{p+1}} S.
		\qedhere
		\]
	\end{proof}
	
	In the following, a sharp estimate for $S_{\alpha,\mathrm{rad}}$ is established.
	
	\begin{teo}\label{teorema estimativa sharp expoente de S alpha,rad em relacao a alpha}
		For $N\geq 1$ and $1< p <2^*_{\alpha}-1$ there exist constants $\mathcal{C}_1,\mathcal{C}_2>0$ such that, as $\alpha\rightarrow \infty$,
		\begin{equation*}
			\mathcal{C}_1 \alpha^{\frac{1}{p+1}}\leq S_{\alpha,rad}\leq \mathcal{C}_2 \alpha^{\frac{1}{p+1}}.
		\end{equation*}
	\end{teo}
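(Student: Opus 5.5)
Both bounds reduce to one-dimensional estimates. A radial $u$ satisfies
\[
\int_B |Du|^2dx=|\mathbb{S}^{N-1}|\int_0^1 r^{N-1}u'(r)^2dr,
\qquad
\int_B V_\alpha |u|^{p+1}dx=|\mathbb{S}^{N-1}|\int_0^1 r^{N-1}V_\alpha(r)|u(r)|^{p+1}dr .
\]
The key fact is the Laplace-type asymptotic
\[
\int_0^1 r^{N-1}V_\alpha(r)\,dr\sim 2^{1-N}\sqrt{\pi/(4\alpha)}\quad\text{as }\alpha\to\infty .
\]
It comes from writing $4r(1-r)=1-(2r-1)^2$, so that $V_\alpha(r)\approx e^{-4\alpha(r-1/2)^2}$ near $r=1/2$. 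The weight therefore concentrates on a window of width $\alpha^{-1/2}$ around $r=1/2$, with total mass of order $\alpha^{-1/2}$. The exponent $\frac{1}{p+1}$ in the statement is exactly what $(\alpha^{-1/2})^{-2/(p+1)}$ gives.

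\emph{Upper bound.} I would plug in a fixed radial test function that does not depend on $\alpha$ and is positive near $r=1/2$. One choice is $w(r)=G_N(r,1/2)$ suitably regularized, or simply $w(r)=1-r^2$. The numerator is then a fixed constant. For the denominator, $w$ is continuous and $w(1/2)>0$, so
\[
\int_0^1 r^{N-1}V_\alpha w^{p+1}dr\ge c\int_{|r-1/2|<\alpha^{-1/2}}V_\alpha\,dr\ge c'\alpha^{-1/2},
\]
because $V_\alpha\ge (1-4/\alpha)^\alpha\to e^{-4}$ on that window. Hence $S_{\alpha,rad}\le \mathcal{C}_2\alpha^{1/(p+1)}$.

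\emph{Lower bound.} I need $\int_0^1 r^{N-1}V_\alpha |u|^{p+1}dr\le C\alpha^{-1/2}\|u\|^{p+1}$, where $\|u\|^2=\int r^{N-1}u'^2$. The plan is a pointwise radial estimate valid away from the origin: since $u(1)=0$, Cauchy--Schwarz gives, for $r\ge 1/4$,
\[
|u(r)|\le\int_r^1|u'|\le\Big(\int_r^1 s^{1-N}ds\Big)^{1/2}\|u\|\le C_N\|u\|,
\]
where $C_N$ is uniform on $[1/4,1]$; for $N\le 2$ the bound holds on all of $[0,1]$, but I only need $[1/4,1]$. Split the weighted integral at $r=1/4$:
\begin{itemize}
\item On $[1/4,1]$, use $|u|\le C_N\|u\|$ together with the Laplace asymptotic to get a bound $C\alpha^{-1/2}\|u\|^{p+1}$.
\item On $[0,1/4]$, $V_\alpha\le (3/4)^\alpha$, which is exponentially small. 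I would bound this piece by $(3/4)^\alpha\int_0^{1/4}r^{N-1}|u|^{p+1}dr$.
\end{itemize}

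The main obstacle is the piece on $[0,1/4]$, since the condition $p<2^*_\alpha-1$ permits $p$ above $2^*-1$, and then $u\in H^1_{0,rad}$ need not lie in $L^{p+1}$ near $0$ without weight. Here I would use Ni's radial lemma $|u(r)|\le C r^{-(N-2)/2}\|u\|$ and keep part of the weight. On $[0,1/4]$ one has $V_\alpha\le (4r)^\alpha\cdot 1$ (since $1-r\le 1$), so the piece is at most
\[
C^{p+1}\|u\|^{p+1}\int_0^{1/4}(4r)^\alpha r^{N-1-(N-2)(p+1)/2}dr .
\]
This integral converges because $p<2^*_\alpha-1$. Substituting $s=4r$ shows it is $O\big(4^{-N+(N-2)(p+1)/2}/(\alpha+N-(N-2)(p+1)/2)\big)$. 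For fixed $p$, this is $O(1/\alpha)=o(\alpha^{-1/2})$ for large $\alpha$, with the exponential factor $4^{(N-2)(p+1)/2}$ fixed.

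Combining the two pieces gives $S_{\alpha,rad}\ge \mathcal{C}_1\alpha^{1/(p+1)}$.
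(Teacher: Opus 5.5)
Your argument is correct, and its skeleton matches the paper's. For the upper bound you use an $\alpha$-independent test function. For the lower bound you combine a pointwise radial estimate with the fact that the weight has mass of order $\alpha^{-1/2}$, concentrated near $r=1/2$.

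The execution differs. In Propositions \ref{proposicao limitacao superior S alpha rad} and \ref{proposicao limitacao inferior S_alpha,rad} the paper takes the first Dirichlet eigenfunction as test function. For the lower bound it applies Ni's radial lemma on the \emph{whole} ball (the logarithmic radial lemma for $N=2$, Cauchy--Schwarz for $N=1$). In both directions it rewrites the weighted integral as a multiple of $\int_{-1}^1 f(s)\varphi_\alpha(s)\,ds$, with $\varphi_\alpha(s)=\frac{\alpha^{1/2}}{\sqrt{\pi}}(1-s^2)^\alpha$. It then passes to the limit with the approximate-identity Lemma \ref{lemma aprox de weierstrass}, which gives explicit values for the bounds on $\limsup$ and $\liminf$ of $S_{\alpha,rad}\alpha^{-1/(p+1)}$.

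Your window estimate and your split at $r=1/4$ give only unspecified constants. That is all the theorem asserts and all that is used later. In exchange you need only three elementary facts:
\begin{itemize}
\item an upper bound of order $\alpha^{-1/2}$ for $\int_0^1 r^{N-1}V_\alpha\,dr$, which follows from $V_\alpha(r)\le e^{-4\alpha(r-1/2)^2}$;
\item the lower bound $(1-4/\alpha)^\alpha$ on the window;
\item one explicit integral near the origin.
\end{itemize}
Your split also makes explicit a point the paper passes over. When $p>N/(N-2)$, the function $\bigl(\frac{s+1}{2}\bigr)^{N-1-(p+1)(N-2)/2}$ to which the paper applies Lemma \ref{lemma aprox de weierstrass} is unbounded near $s=-1$, while that lemma is stated for continuous $f$. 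Treating a neighbourhood of the origin separately with $V_\alpha(r)\le (4r)^\alpha$, exactly as you do, is what justifies that step.

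One correction. Your parenthetical remark that for $N\le 2$ the Cauchy--Schwarz bound is uniform on all of $[0,1]$ is false for $N=2$, where it only gives $|u(r)|\le\sqrt{-\ln r}\,\|u\|$. This costs you nothing. For $N=2$ the piece on $[0,1/4]$ is covered by your first bullet together with the embedding $H^1_0(B)\hookrightarrow L^{p+1}(B)$, valid for every $p$ in dimension two. Alternatively, integrating the logarithmic bound against $(4r)^\alpha$ again gives $O(\alpha^{-1})$.
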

	
	The proof begins with two preliminary lemmas.

	\begin{lema}\label{lemma estimativa dos quocientes com as funcos gama}
		Let $K_1,K_2\geq 0$. Then
		\begin{equation*}
			\frac{4^\alpha \Gamma(\alpha+K_1)\Gamma(\alpha+K_2)}{\Gamma(2\alpha+ K_1+K_2)} = \frac{\sqrt{\pi}}{2^{K_1+K_2-1}} \alpha^{-\frac{1}{2}}+ O(\alpha^{-\frac{3}{2}}),
		\end{equation*}
		as $\alpha\rightarrow \infty$.
	\end{lema}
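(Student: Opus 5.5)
The plan is to reduce everything to Stirling's expansion of $\log\Gamma$ with an explicit first correction term, and then exponentiate. Recall that for fixed $K\ge 0$, as $z\to\infty$,
\[
\log\Gamma(z+K) = \Bigl(z+K-\tfrac12\Bigr)\log z - z + \tfrac12\log(2\pi) + \frac{c_K}{z} + O(z^{-2}), \qquad c_K=\frac{K(K-1)}{2}+\frac{1}{12}.
\]
This follows from the standard Stirling series for $\log\Gamma(z)$ combined with $\log(z+K)=\log z+K/z-K^2/(2z^2)+\dots$. In fact only the fact that the remainder is $O(1/z)$ matters, not the precise value of $c_K$.

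I would apply this expansion three times. With $z=\alpha$ it is applied to $\Gamma(\alpha+K_1)$ and $\Gamma(\alpha+K_2)$. With $z=2\alpha$ and shift $K=K_1+K_2$ it is applied to $\Gamma(2\alpha+K_1+K_2)$. Taking the logarithm of the quotient gives
\[
\alpha\log 4 + (2\alpha+K-1)\log\alpha - 2\alpha + \log(2\pi) - \Bigl(2\alpha+K-\tfrac12\Bigr)\bigl(\log 2+\log\alpha\bigr) + 2\alpha - \tfrac12\log(2\pi) + O(\alpha^{-1}).
\]
The terms of size $\alpha\log\alpha$ and $\alpha$ cancel. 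In particular, $\alpha\log4$ cancels against $-2\alpha\log 2$, which is exactly why the factor $4^\alpha$ appears in the statement. What remains is
\[
-\tfrac12\log\alpha - \Bigl(K-\tfrac12\Bigr)\log 2 + \tfrac12\log(2\pi) + O(\alpha^{-1}).
\]

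Exponentiating yields
\[
\alpha^{-1/2}\,2^{\frac12-K}\sqrt{2\pi}\,e^{O(1/\alpha)} = \frac{\sqrt{\pi}}{2^{K_1+K_2-1}}\,\alpha^{-1/2}\bigl(1+O(\alpha^{-1})\bigr).
\]
Multiplying out gives the claimed leading term $\frac{\sqrt{\pi}}{2^{K_1+K_2-1}}\alpha^{-1/2}$ plus an error $O(\alpha^{-3/2})$.

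An alternative route, useful as a cross-check, is the Legendre duplication formula $\Gamma(2z)=\pi^{-1/2}2^{2z-1}\Gamma(z)\Gamma(z+\frac12)$. When $K_1=K_2=0$ it reduces the quotient to $2\sqrt\pi\,\Gamma(\alpha)/\Gamma(\alpha+\frac12)$. The ratio asymptotics $\Gamma(\alpha+a)/\Gamma(\alpha+b)=\alpha^{a-b}(1+O(1/\alpha))$ then finish that case. The general case follows by writing the three shifted Gamma functions through the same ratio asymptotics.

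The only real point of care is bookkeeping the error terms. One must check that all $O(1/\alpha)$ corrections are uniform, which holds since $K_1,K_2$ are fixed, and that $e^{O(1/\alpha)}=1+O(1/\alpha)$, so that after multiplication by $\alpha^{-1/2}$ the remainder is genuinely $O(\alpha^{-3/2})$ and not merely $o(\alpha^{-1/2})$. Beyond this there is no real obstacle.
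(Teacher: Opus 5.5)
Your proof is correct. The shifted Stirling expansion, including $c_K=\frac{K(K-1)}{2}+\frac1{12}$, is right. The $\alpha\log\alpha$ and $\alpha$ terms cancel as you say, and $2^{\frac12-K}\sqrt{2\pi}=\sqrt{\pi}\,2^{1-K}$ gives the stated constant, with $e^{O(1/\alpha)}=1+O(1/\alpha)$ producing the $O(\alpha^{-3/2})$ remainder.

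Your main route differs from the paper's. The paper's proof is essentially your ``cross-check'' carried out for general shifts. It applies Legendre's duplication formula to $\Gamma(2\alpha+K_1+K_2)$ at $z=\alpha+\frac{K_1+K_2}{2}$. This cancels $4^\alpha$ exactly and produces the constant $\frac{\sqrt\pi}{2^{K_1+K_2-1}}$ as an identity, leaving
\[
\frac{\Gamma(\alpha+K_1)}{\Gamma\bigl(\alpha+\frac{K_1+K_2}{2}\bigr)}\cdot\frac{\Gamma(\alpha+K_2)}{\Gamma\bigl(\alpha+\frac{K_1+K_2+1}{2}\bigr)},
\]
whose exponents sum to $-\frac12$. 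It then quotes the Tricomi--Erd\'elyi ratio asymptotics $\Gamma(\alpha+s)/\Gamma(\alpha+r)=\alpha^{s-r}\bigl(1+O(\alpha^{-1})\bigr)$.

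What each buys: the paper never manipulates large terms that must cancel. It only needs asymptotics of Gamma ratios at the same large argument, and the exact constant comes from an identity rather than from combining $\sqrt{2\pi}$ factors. Your route is self-contained from Stirling's series and makes transparent why $4^\alpha$ is exactly the right normalization.

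Neither argument uses $K_1,K_2\ge 0$; both work for any fixed real shifts. This matters, since the paper later applies the lemma with $K_1=2-p$, which is negative when $p>2$.
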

	\begin{proof}
		By Legendre's relation \cite[Eq.~(3.11)]{Artin2015}, it holds that
		\begin{equation*}
			\Gamma(2\alpha+K_1+K_2)= \frac{2^{2\alpha+K_1+K_2-1}}{\sqrt{\pi}}\Gamma\left(\alpha+\frac{K_1+K_2}{2}\right)\Gamma\left(\alpha+\frac{K_1+K_2+1}{2}\right).
		\end{equation*}
		Consequently, 
		\begin{align*}
			\frac{4^\alpha \Gamma(\alpha+K_1)\Gamma(\alpha+K_2)}{\Gamma(2\alpha+ K_1+K_2)} =\frac{\sqrt{\pi}}{2^{K_1+K_2-1}}\left(\frac{\Gamma(\alpha+K_1)}{\Gamma\left(\alpha+\frac{K_1+K_2}{2}\right)}  \right)\left(\frac{\Gamma(\alpha+K_2)}{\Gamma\left(\alpha+\frac{K_1+K_2+1}{2}\right)}  \right).
		\end{align*}
		The conclusion then follows from the asymptotic estimate established in \cite{Tricomi1951}, which guarantees that
		\[
		\frac{\Gamma(\alpha +s)}{\Gamma(\alpha + r)} = \alpha^{s-r}\left[1 + \frac{(s-r)(s+r-1)}{2\alpha}+ O(\alpha^{-2})\right],
		\]
		as $\alpha \to +\infty$, provided $s$ and $r$ remain bounded.
	\end{proof}
	
	\begin{remk}\label{rmk: vendo a integral do nosso peso em funcao da funcao gamma}
		The Beta Function (also known as Euler's first integral) is the function
		\[B(x,y) \coloneq \int_{0}^{1}t^{x-1}(1-t)^{y-1}dt.\]
		By \cite[Eq.~(2.13)]{Artin2015}, it holds that
		\[B(x,y) = \frac{\Gamma(x)\Gamma(y)}{\Gamma(x+y)}.\]
		Since the integral of the weight reads as
		\begin{align*}
			\int_{B} V_\alpha(|x|)dx = \int_{B}(4|x|(1-|x|))^\alpha dx
			= |\mathbb{S}^{N-1}|4^{\alpha}\int_{0}^{1}r^{\alpha+N-1}(1-r)^{\alpha}dr,
		\end{align*}
		it can be written as
		\begin{equation*}
			\int_{B} V_\alpha(|x|)dx = |\mathbb{S}^{N-1}|4^{\alpha} B(\alpha+N,\alpha+1) =|\mathbb{S}^{N-1}| \frac{4^\alpha\Gamma(\alpha+N)\Gamma(\alpha+1)}{\Gamma(2\alpha + N+1)},
		\end{equation*}
		and Lemma \ref{lemma estimativa dos quocientes com as funcos gama} can be used to estimate this integral.
	\end{remk}

	The following lemma provides a version of 
	uniform convergence on compact sets for convolutions of continuous functions, analogous to \cite[Proposition 4.21]{2011_Brezis_BOOK}. 
	We emphasize that, differently from \cite{2011_Brezis_BOOK}, the regularizing function $\varphi_\alpha$ satisfies the \textit{approximated} normalization $\int_{-1}^{1}\varphi_\alpha(t)dt  = 1+ O(\alpha^{-1})$. Therefore, we include a proof for completeness. The argument follows, in part, the approach presented in \cite[p.~253]{2007_Lima_BOOK}.

	\begin{lema}\label{lemma aprox de weierstrass} Let   
		\begin{equation}\label{phialpha} \varphi_\alpha(x)= \begin{cases}
				\frac{\alpha^{\frac{1}{2}} }{\sqrt{\pi}}(1-x^2)^\alpha & \text{ if } |x|\leq 1,\\
				0 & \text{ if } |x|>1. 
		\end{cases}\end{equation}
		Then, for every continuous function $f:\mathbb{R}\rightarrow \mathbb{R}$, setting
		\[P_\alpha(x) = \int_{-1}^{1}f(x+t)\varphi_\alpha(t)dt,\]
		it holds that $\lim_{\alpha\rightarrow\infty} P_\alpha = f$ uniformly on any interval $[a,b]$. 
		
	\end{lema}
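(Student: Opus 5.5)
The plan is the standard approximate-identity argument, adapted to the fact that the normalization of $\varphi_\alpha$ is only asymptotic. Fix $[a,b]$ and set $K=[a-1,b+1]$. On $K$, $f$ is bounded by some $M$ and uniformly continuous. For $x\in[a,b]$ we decompose
\[
P_\alpha(x)-f(x)=\int_{-1}^{1}\bigl(f(x+t)-f(x)\bigr)\varphi_\alpha(t)\,dt+f(x)\Bigl(\int_{-1}^{1}\varphi_\alpha(t)\,dt-1\Bigr),
\]
and we control the two terms separately.

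The first step is the normalization, which makes the second term vanish uniformly. By the substitution $t=2s-1$,
\[
\int_{-1}^1(1-t^2)^\alpha dt=2\cdot 4^\alpha\int_0^1 s^\alpha(1-s)^\alpha ds=\frac{2\cdot4^\alpha\Gamma(\alpha+1)^2}{\Gamma(2\alpha+2)},
\]
using the Beta function identity of Remark \ref{rmk: vendo a integral do nosso peso em funcao da funcao gamma}. Lemma \ref{lemma estimativa dos quocientes com as funcos gama} with $K_1=K_2=1$ gives that this equals $2\bigl(\tfrac{\sqrt\pi}{2}\alpha^{-1/2}+O(\alpha^{-3/2})\bigr)$. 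Hence $\int_{-1}^1\varphi_\alpha=1+O(\alpha^{-1})$, and the second term is bounded by $M\,O(\alpha^{-1})\to0$ uniformly on $[a,b]$.

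For the first term we use uniform continuity. Given $\varepsilon>0$, choose $\delta\in(0,1)$ such that $|f(x+t)-f(x)|<\varepsilon$ whenever $x\in[a,b]$ and $|t|<\delta$. We then split the integral into the region $|t|<\delta$ and the region $\delta\le|t|\le1$. On $|t|<\delta$, since $\varphi_\alpha\ge0$, the contribution is at most $\varepsilon\int_{-1}^1\varphi_\alpha\le 2\varepsilon$ for $\alpha$ large. On $\delta\le|t|\le1$, the bound $(1-t^2)^\alpha\le(1-\delta^2)^\alpha$ shows the contribution is at most
\[
2M\cdot 2\,\frac{\alpha^{1/2}}{\sqrt\pi}(1-\delta^2)^\alpha,
\]
which tends to $0$ because the geometric decay beats $\alpha^{1/2}$. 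Combining the three estimates gives $\limsup_{\alpha\to\infty}\sup_{[a,b]}|P_\alpha-f|\le2\varepsilon$, and the result follows since $\varepsilon>0$ is arbitrary.

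The only step that differs from the classical argument, and so the main obstacle, is the normalization. It is handled by the Gamma-function asymptotics of Lemma \ref{lemma estimativa dos quocientes com as funcos gama}; the rest is the usual approximate-identity argument.
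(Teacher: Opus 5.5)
Your proof is correct and follows essentially the same route as the paper: the same decomposition into $\int_{-1}^{1}(f(x+t)-f(x))\varphi_\alpha(t)\,dt$ plus $f(x)\bigl(\int_{-1}^{1}\varphi_\alpha-1\bigr)$, the normalization $\int_{-1}^1\varphi_\alpha=1+O(\alpha^{-1})$ via the Beta identity and the Gamma-ratio lemma, and the uniform-continuity/tail splitting. The only difference is cosmetic: the paper splits into three intervals with explicit $\epsilon/8$ bookkeeping, while you use two regions and conclude with a $\limsup$.
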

	\begin{proof}
		Since 
		\begin{equation*}
			\int_{-1}^{1}\varphi_\alpha(t)dt= \frac{2\alpha^{\frac{1}{2}}}{\sqrt{\pi}} \frac{4^\alpha\Gamma(\alpha+1)\Gamma(\alpha+1)}{\Gamma(2\alpha+2)},
		\end{equation*}
		using Lemma \ref{lemma estimativa dos quocientes com as funcos gama}, it follows that 
		\[g(\alpha) \coloneq \int_{-1}^{1}\varphi_\alpha(t)dt -1 = O(\alpha^{-1}), \qquad \text{as} \qquad \alpha  \to + \infty. \]
		Also, $\varphi_\alpha \to 0$ uniformly on the set $\{x; |x|\geq \sigma\}$, for any $0<\sigma<1$, as $\alpha \to + \infty$. Indeed, if $|x| \in [\sigma, 1]$
		\[0\leq \varphi_\alpha(x)= \frac{\alpha^{\frac{1}{2}}}{\sqrt{\pi}}(1-x^2)^\alpha \leq \frac{\alpha^{\frac{1}{2}}}{\sqrt{\pi}}(1-\sigma^2)^\alpha \to 0, \qquad \text{as} \qquad \alpha \to +\infty. \]
		On the other hand, since $f(x)$ is uniformly continuous on the interval $[a-1,b+1]$, given $\epsilon>0$, there exists $1>\delta>0$ such that, if $z,y \in [a-1,b+1]$ and $|z-y|<\delta$ then $|f(z)-f(y)|<\frac{\epsilon}{8}$. From the definition of $g(\alpha)$, it follows that
		\[f(x)= \int_{-1}^{1}f(x)\varphi_\alpha(t)dt - f(x)g(\alpha),\]
		and
		\begin{equation*}
			|f(x)-P_\alpha(x)|= \left|\int_{-1}^{1}[f(x+t)-f(x)]\varphi_\alpha(t)dt + f(x)g(\alpha)\right|.
		\end{equation*}
		Define $m= \sup_{x\in[a-1,b+1]}|f(x)|$.  Let $\alpha_1 > 0$ be such that $\alpha \geq \alpha_1$ implies $|g(\alpha)|<\min\{1,\frac{\epsilon}{4m}\}$
		and $|\varphi_\alpha(t)|<\frac{\epsilon}{8m}$ for any $|t|\geq \delta $. Then, for $x\in [a,b]$, 
		\begin{align*}
			|f(x)-P_\alpha(x)|&\leq \int_{-1}^{1}|f(x+t)-f(x)|\varphi_\alpha(t)dt + m|g(\alpha)|\\
			&=\int_{-1}^{-\delta}|f(x+t)-f(x)|\varphi_\alpha(t)dt + \int_{-\delta}^{\delta}|f(x+t)-f(x)|\varphi_\alpha(t)dt \\ &\hspace{1em}+\int_{\delta}^{1}|f(x+t)-f(x)|\varphi_\alpha(t)dt+ m|g(\alpha)|.
		\end{align*}
		Since
		\begin{equation*}
			\int_{-1}^{-\delta}|f(x+t)-f(x)|\varphi_\alpha(t)dt < 2m\frac{\epsilon}{8m} = \frac{\epsilon}{4}, 
		\end{equation*}
		\begin{equation*}
			\int_{-\delta}^{\delta}|f(x+t)-f(x)|\varphi_\alpha(t)dt < \frac{\epsilon}{8}\int_{-\delta}^{\delta}\varphi_\alpha(t)dt <\frac{\epsilon}{8}(1+g(\alpha))< \frac{\epsilon}{4},
		\end{equation*}
		\begin{equation*}
			\int_{\delta}^{1}|f(x+t)-f(x)|\varphi_\alpha(t)dt < 2m\frac{\epsilon}{8m} = \frac{\epsilon}{4} 
		\end{equation*}
		and
		\begin{equation*}
			m|g(\alpha)|< \frac{\epsilon}{4},
		\end{equation*}
		it follows that $|f(x)-P_\alpha(x)|<\epsilon$, for any $\alpha \ge \alpha_1$ and $x \in [a, b]$, and the lemma is proved.
	\end{proof}

	\begin{prop}\label{proposicao limitacao superior S alpha rad}
		For $N\geq 1$ and $1< p< 2_\alpha^*-1$ there exists a constant $\mathcal{C}_2>0$ such that  
		\[\limsup_{\alpha\rightarrow\infty} S_{\alpha,rad}\alpha^{-\frac{1}{p+1}} \leq \mathcal{C}_2. \]
	\end{prop}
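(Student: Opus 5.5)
The upper bound comes from plugging a single fixed radial test function into the quotient defining $S_{\alpha,rad}$ in \eqref{minimizacao nas radiais}. The key point is that the denominator involves $\int_B V_\alpha |u|^{p+1}dx$. The weight concentrates on $|x|=1/2$ with width of order $\alpha^{-1/2}$, so this integral behaves like $\alpha^{-1/2}$ times a positive constant, as long as $u(1/2)\neq 0$. The numerator stays fixed, so the quotient is of order $(\alpha^{-1/2})^{-2/(p+1)}=\alpha^{1/(p+1)}$, which is exactly the claimed rate.

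Concretely, I would fix any radial $u\in H^1_{0,rad}(B)$, continuous on $\overline B$, with $u(1/2)\neq0$; for instance $u(x)=1-|x|^2$. Its numerator $\int_B|Du|^2dx$ is a constant independent of $\alpha$. For the denominator I would write, in polar coordinates,
\[
\int_B V_\alpha(|x|)|u|^{p+1}dx=|\mathbb S^{N-1}|\int_0^1 (4r(1-r))^\alpha r^{N-1}|u(r)|^{p+1}dr .
\]
Then I substitute $r=\frac{1+t}{2}$, so that $4r(1-r)=1-t^2$ and $dr=dt/2$. This turns the integral into
\[
\frac{|\mathbb S^{N-1}|}{2}\,\frac{\sqrt\pi}{\alpha^{1/2}}\int_{-1}^{1} f(t)\varphi_\alpha(t)\,dt ,
\]
with $\varphi_\alpha$ as in \eqref{phialpha} and $f(t)=\left(\tfrac{1+t}{2}\right)^{N-1}\left|u\!\left(\tfrac{1+t}{2}\right)\right|^{p+1}$ on $[-1,1]$. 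I extend $f$ continuously to $\mathbb R$ (for example by constants outside $[-1,1]$); this does not affect the integral, since $\varphi_\alpha$ vanishes outside $[-1,1]$.

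Next I apply Lemma \ref{lemma aprox de weierstrass} with $x=0$. This gives $\int_{-1}^1 f(t)\varphi_\alpha(t)dt\to f(0)=2^{1-N}|u(1/2)|^{p+1}>0$. Hence
\[
\alpha^{1/2}\int_B V_\alpha|u|^{p+1}dx\to \frac{\sqrt\pi\,|\mathbb S^{N-1}|}{2^{N}}|u(1/2)|^{p+1}.
\]
As a cross-check, for a constant profile this agrees with Remark \ref{rmk: vendo a integral do nosso peso em funcao da funcao gamma} and Lemma \ref{lemma estimativa dos quocientes com as funcos gama}. It follows that
\[
S_{\alpha,rad}\,\alpha^{-\frac1{p+1}}\le \frac{\int_B|Du|^2dx}{\left(\alpha^{1/2}\int_B V_\alpha|u|^{p+1}dx\right)^{\frac2{p+1}}},
\]
and taking $\limsup$ yields the bound with
\[
\mathcal C_2=\int_B|Du|^2dx\,\Big(\tfrac{\sqrt\pi|\mathbb S^{N-1}|}{2^N}|u(1/2)|^{p+1}\Big)^{-2/(p+1)}.
\]

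The constraint $p<2^*_\alpha-1$ plays no role here, except to make $S_{\alpha,rad}$ the relevant quantity. The only delicate step is the passage to the limit in the denominator. It requires the change of variables that matches $\varphi_\alpha$ exactly, including the factor $\alpha^{1/2}/\sqrt\pi$, and the continuity of $f$ at $t=0$, which is where $u(1/2)\ne0$ is used. With Lemma \ref{lemma aprox de weierstrass} available, this step is routine.
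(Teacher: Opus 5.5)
Your proposal is correct and follows essentially the same route as the paper: a fixed radial test function in the quotient, the substitution $r=\frac{1+t}{2}$ to produce $\varphi_\alpha$, and Lemma \ref{lemma aprox de weierstrass} at $0$ to get the limit of the denominator. The only difference is that the paper uses the first Dirichlet eigenfunction $\phi$ where you use $1-|x|^2$, and you check explicitly the continuous extension of $f$ to $\mathbb{R}$ that the lemma requires; your constant $\mathcal{C}_2$ agrees with the paper's expression after simplification.
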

	
	\begin{proof}

		Let $\phi$ be the first eigenfunction of $(-\Delta,H_0^1(B))$ such that $\int_B \phi^2 dx =1$ and $\phi$ is positive in $B$.  It is well known that $\phi$ is radial and radially decreasing.
		From the definition (\ref{minimizacao nas radiais}) of $S_{\alpha,rad}$, it follows that
		\begin{equation*}
			S_{\alpha,rad} \leq \frac{\int_B |D\phi|^2 dx}{\left(\int_B V_\alpha(|x|)|\phi|^{p+1}dx\right)^{2/p+1}}.
		\end{equation*}
		Since	
		\begin{align*}
			\int_B V_\alpha(|x|)|\phi|^{p+1}dx &= |\mathbb{S}^{N-1}|\int_{0}^{1}r^{N-1}(4r(1-r))^\alpha \phi(r)^{p+1}dr\\
			&=\frac{|\mathbb{S}^{N-1}|}{2}\int_{-1}^{1}\left(\frac{s+1}{2}\right)^{N-1}(1-s^2)^\alpha \phi^{p+1}\left(\frac{s+1}{2}\right)ds\\
			&=\frac{|\mathbb{S}^{N-1}|\sqrt{\pi}}{2\alpha^{\frac{1}{2}}}\int_{-1}^{1}\left(\frac{s+1}{2}\right)^{N-1}\varphi_\alpha(s) \phi^{p+1}\left(\frac{s+1}{2}\right)ds,
		\end{align*}
		with $\varphi_{\alpha}$ as in \eqref{phialpha}, one has that
		\begin{equation*}
			S_{\alpha,rad} \leq \frac{4^{\frac{1}{p+1}}\alpha^{\frac{1}{p+1}}}{\left(|\mathbb{S}^{N-1}|\sqrt{\pi}\right)^{\frac{2}{p+1}}} \frac{\int_B|D\phi|^2dx}{\left(\int_{-1}^{1}\left(\frac{s+1}{2}\right)^{N-1}\varphi_\alpha(s) \phi^{p+1}\left(\frac{s+1}{2}\right)ds\right)^{\frac{2}{p+1}}}.
		\end{equation*}
		Therefore, by Lemma \ref{lemma aprox de weierstrass},
		
		\begin{equation*}
			\limsup_{\alpha\rightarrow\infty} S_{\alpha,rad}\alpha^{-\frac{1}{p+1}} \leq \frac{4^{\frac{1}{p+1}}\int_B|D\phi|^2dx}{\left( \frac{|\mathbb{S}^{N-1}|\sqrt{\pi}}{2^{N-1}}\phi^{p+1}\left(\frac{1}{2}\right)   \right)^{\frac{2}{p+1}}}. \qedhere
		\end{equation*}
		
	\end{proof}
	
	\begin{prop}\label{proposicao limitacao inferior S_alpha,rad}
		Let $1< p <2_\alpha^*-1$. Then:
		
		i) For $N>2$,
		\begin{equation*}
			\left(\frac{|\mathbb{S}^{N-1}|^{p-1}4^N}{\pi}\right)^{\frac{1}{p+1}}\frac{(N-2)}{2^{N-2}}	\leq  \liminf_{\alpha\rightarrow \infty} S_{\alpha,rad}\alpha^{-\frac{1}{p+1}}.
		\end{equation*}
		ii) For $N=2$,
		\begin{equation*}
			\left(\frac{(2\pi)^{p-1}16}{\pi}\right)^{\frac{1}{p+1}}\frac{1}{\ln(2)} 	\leq  \liminf_{\alpha\rightarrow \infty} S_{\alpha,rad}\alpha^{-\frac{1}{p+1}}.
		\end{equation*}
		iii) For $N=1$,
		\begin{equation*}
			\frac{2}{\pi^{\frac{1}{p+1}}}	\leq  \liminf_{\alpha\rightarrow \infty} S_{\alpha,rad}\alpha^{-\frac{1}{p+1}}.
		\end{equation*}
	\end{prop}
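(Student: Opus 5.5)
The plan is to bound a radial $H^1_0$ function pointwise by its Dirichlet energy, insert this bound into the weighted $L^{p+1}$ norm, and evaluate the resulting one–dimensional integral as $\alpha\to\infty$ with the Laplace-type asymptotics already available (Lemma \ref{lemma aprox de weierstrass}, Lemma \ref{lemma estimativa dos quocientes com as funcos gama}).

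\textbf{Step 1 (pointwise bound).} Let $u\in H^1_{0,rad}(B)$, written as $u(r)$, and set $h(r)=G_N(r,r)$. Thus $h(r)=\frac{1}{N-2}(r^{2-N}-1)$ if $N>2$, $h(r)=-\ln r$ if $N=2$, and $h(r)=1-r$ if $N=1$. Since $u(1)=0$, Cauchy–Schwarz gives
\[
|u(r)|\le \int_r^1 |u'(s)|\,ds\le \Big(\int_r^1 s^{N-1}|u'|^2ds\Big)^{\frac12}\Big(\int_r^1 s^{1-N}ds\Big)^{\frac12}\le \Big(\frac{\int_B|Du|^2dx}{|\mathbb{S}^{N-1}|}\Big)^{\frac12} h(r)^{\frac12}.
\]
In dimension $N=1$ the ``sphere'' $\mathbb{S}^0$ has measure $2$, and I would work with even functions on $(-1,1)$.

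\textbf{Step 2 (reduction to an integral).} Inserting this bound into $\int_B V_\alpha|u|^{p+1}dx=|\mathbb{S}^{N-1}|\int_0^1 r^{N-1}V_\alpha(r)|u|^{p+1}dr$ yields
\[
\Big(\int_B V_\alpha |u|^{p+1}dx\Big)^{\frac{2}{p+1}}\le |\mathbb{S}^{N-1}|^{\frac{2}{p+1}-1}\Big(\int_B|Du|^2dx\Big)\, I_\alpha^{\frac{2}{p+1}},
\]
where
\[
I_\alpha=\int_0^1 r^{N-1}(4r(1-r))^\alpha h(r)^{\frac{p+1}{2}}dr .
\]
Hence $S_{\alpha,rad}\ge |\mathbb{S}^{N-1}|^{\frac{p-1}{p+1}}I_\alpha^{-\frac{2}{p+1}}$, and everything reduces to the asymptotics of $I_\alpha$.

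\textbf{Step 3 (asymptotics of $I_\alpha$).} This is the only delicate point, because for $N\ge2$ the function $h$ is singular at $r=0$, so Lemma \ref{lemma aprox de weierstrass} does not apply directly. I would split $[0,1]=[0,\tfrac14]\cup[\tfrac14,1]$.

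On $[0,\tfrac14]$ I use $(4r(1-r))^{\alpha}\le (3/4)^{\alpha-\alpha_0}(4r)^{\alpha_0}$ for a fixed $\alpha_0$. Choose $\alpha_0$ large enough that $r^{N-1+\alpha_0}h^{\frac{p+1}{2}}$ is integrable near $0$; for instance $\alpha_0>(N-2)\frac{p+1}{2}-N$ works for $N>2$. Then this piece is $O((3/4)^\alpha)=o(\alpha^{-1/2})$.

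On $[\tfrac14,1]$ the function $f(r)=r^{N-1}h(r)^{\frac{p+1}{2}}$ is continuous; extend it continuously to $\mathbb{R}$. Substituting $r=\frac{s+1}{2}$ exactly as in the proof of Proposition \ref{proposicao limitacao superior S alpha rad}, the contribution becomes $\frac{\sqrt\pi}{2\sqrt\alpha}\int_{-1}^1 \varphi_\alpha(s)f(\tfrac{s+1}{2})\,ds$, up to an exponentially small error from $s\in[-1,-\tfrac12]$. By Lemma \ref{lemma aprox de weierstrass} evaluated at $x=0$,
\[
I_\alpha=\frac{\sqrt\pi}{2\sqrt\alpha}\,2^{1-N}\,h(\tfrac12)^{\frac{p+1}{2}}\,(1+o(1)).
\]

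\textbf{Step 4 (conclusion).} Combining Steps 2 and 3,
\[
\liminf_{\alpha\to\infty}S_{\alpha,rad}\,\alpha^{-\frac1{p+1}}\ge \Big(\frac{|\mathbb{S}^{N-1}|^{p-1}4^N}{\pi}\Big)^{\frac1{p+1}}\frac{1}{h(1/2)}.
\]
It remains to compare with the three stated constants, using $h(\tfrac12)=\frac{2^{N-2}-1}{N-2}$ for $N>2$, $\ln 2$ for $N=2$, and $\tfrac12$ for $N=1$.

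For $N>2$, note that $\frac{N-2}{2^{N-2}-1}\ge\frac{N-2}{2^{N-2}}$, which gives i). For $N=2$, with $|\mathbb{S}^1|=2\pi$, the bound is exactly ii). For $N=1$, with $|\mathbb{S}^0|=2$, the bound is $2\big(2^{p-1}\cdot 4/\pi\big)^{\frac1{p+1}}\ge 2\pi^{-\frac1{p+1}}$, which gives iii).

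I expect the main obstacle to be Step 3: handling the singularity of $h$ at the origin, and ensuring that the hypothesis $p<2^*_\alpha-1$ (equivalently, $\alpha$ large) gives integrability there so that the region near $0$ contributes only $o(\alpha^{-1/2})$.
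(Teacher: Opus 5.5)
Your proof is correct, and it follows the paper's skeleton: a pointwise bound for radial functions in terms of $\|Du\|_{L^2}$, then the substitution $r=\frac{s+1}{2}$, then Lemma \ref{lemma aprox de weierstrass}. The key inequality, however, is different, and the difference matters.

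The paper uses three separate bounds:
\begin{itemize}
\item for $N\ge 3$, Ni's radial lemma $|u(r)|^2\le \frac{\|Du\|_{L^2}^2}{(N-2)|\mathbb{S}^{N-1}|}\,r^{2-N}$, which is the whole-space estimate and does not use $u(1)=0$;
\item for $N=2$, the Bonheure--Serra--Tarallo lemma;
\item for $N=1$, a Cauchy--Schwarz bound on $(-1,1)$ that does not exploit evenness.
\end{itemize}
You derive a single inequality for all $N$, namely $|u(r)|^2\le \frac{\|Du\|_{L^2}^2}{|\mathbb{S}^{N-1}|}\,G_N(r,r)$. It coincides with the paper's bound for $N=2$ and is strictly better for $N>2$ and $N=1$. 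The constant it produces, $\bigl(\frac{|\mathbb{S}^{N-1}|^{p-1}4^N}{\pi}\bigr)^{\frac{1}{p+1}}G_N(\frac12,\frac12)^{-1}$, is exactly the limit $C$ of Theorem \ref{teo caracterizacao de S alpha rad}.

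So you prove more than the statement, and your bound is asymptotically sharp. Equality holds in your Cauchy--Schwarz step at $r=\frac12$ for $u=G_N(\cdot,\frac12)$. Using this function as the test function in Proposition \ref{proposicao limitacao superior S alpha rad} gives the matching upper bound. Hence Theorem \ref{teo caracterizacao de S alpha rad} follows from your argument directly, without the blow-up analysis of Section \ref{sec: comportamento assitn solucao radial ground state}.

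Your Step 3 is also more careful than the paper. The paper applies Lemma \ref{lemma aprox de weierstrass} directly to $\bigl(\frac{s+1}{2}\bigr)^{N-1-\frac{(p+1)(N-2)}{2}}$. This function is unbounded at $s=-1$ when $p>\frac{N}{N-2}$, so the lemma's continuity hypothesis fails there. A splitting like yours is what makes that step rigorous: integrability near the origin comes from taking $\alpha$ large, which is the condition $p<2^*_\alpha-1$.
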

	\begin{proof}
		Let $u\in H^1_{0,rad}(B)$. For $N\geq 3$, by \cite[Radial Lemma]{Ni1982},
		\begin{equation*}
			|u(x)|^{p+1}\leq \left(|\mathbb{S}^{N-1}|(N-2)\right)^{-\frac{p+1}{2}}\|Du\|_{L^2}^{p+1}|x|^{-\frac{(p+1)(N-2)}{2}}, \quad \text{for all } \quad x \in B\backslash\{0\}.
		\end{equation*}
		Thus
		\begin{align*}
			\int_B (4|x|(1-|x|))^\alpha |u|^{p+1}dx \leq&\left(|\mathbb{S}^{N-1}|(N-2)\right)^{-\frac{p+1}{2}}\|Du\|_{L^2}^{p+1}\\
			&\hspace{2em}\times \int_B |x|^{-\frac{(p+1)(N-2)}{2}}(4|x|(1-|x|))^\alpha dx,
		\end{align*}
		which implies that
		\begin{align*}
			\frac{|\mathbb{S}^{N-1}|(N-2)}{\left(\int_B |x|^{-\frac{(p+1)(N-2)}{2}}(4|x|(1-|x|))^\alpha dx \right)^{\frac{2}{p+1}}} \leq \frac{\int_{B} |Du|^2dx}{\left(\int_{B}(4|x|(1-|x|))^\alpha |u|^{p+1}dx\right)^{\frac{2}{p+1}}}.
		\end{align*}
		
		Notice that
		\begin{align*}
			\int_B |x|^{-\frac{(p+1)(N-2)}{2}}(4|x|(1-|x|))^\alpha dx &= |\mathbb{S}^{N-1}|\int_{0}^{1}r^{N-1-\frac{(p+1)(N-2)}{2}}(4r(1-r))^\alpha dr\\
			&=\frac{|\mathbb{S}^{N-1}|}{2}\int_{-1}^{1}\left(\frac{s+1}{2}\right)^{N-1-\frac{(p+1)(N-2)}{2}}(1-s^2)^\alpha ds\\
			&=\frac{|\mathbb{S}^{N-1}|\sqrt{\pi}}{2\alpha^{\frac{1}{2}}}\int_{-1}^{1}\left(\frac{s+1}{2}\right)^{N-1-\frac{(p+1)(N-2)}{2}}\varphi_\alpha(s) ds,
		\end{align*}
		with $\varphi_{\alpha}$ as in \eqref{phialpha}. Thus,
		\begin{align*}
			\frac{4^{\frac{1}{p+1}}\alpha^{\frac{1}{p+1}}|\mathbb{S}^{N-1}|(N-2)}{\left(|\mathbb{S}^{N-1}|\sqrt{\pi}\int_{-1}^{1}\left(\frac{s+1}{2}\right)^{N-1-\frac{(p+1)(N-2)}{2}}\varphi_\alpha(s)ds\right)^{\frac{2}{p+1}}} \leq \frac{\int_{B} |Du|^2dx}{\left(\int_{B}(4|x|(1-|x|))^\alpha |u|^{p+1}dx\right)^{\frac{2}{p+1}}}.
		\end{align*}
		and then
		\begin{align*}
			\frac{4^{\frac{1}{p+1}}\alpha^{\frac{1}{p+1}}|\mathbb{S}^{N-1}|(N-2)}{\left(|\mathbb{S}^{N-1}|\sqrt{\pi}\int_{-1}^{1}\left(\frac{s+1}{2}\right)^{N-1-\frac{(p+1)(N-2)}{2}}\varphi_\alpha(s)ds\right)^{\frac{2}{p+1}}} \leq S_{\alpha,rad}.
		\end{align*}
		Using Lemma \ref{lemma aprox de weierstrass}, it follows that
		\begin{align*}
			\left(\frac{|\mathbb{S}^{N-1}|^{p-1}4^N}{\pi}\right)^{\frac{1}{p+1}}\frac{(N-2)}{2^{N-2}}	\leq \liminf_{\alpha\rightarrow \infty} S_{\alpha,rad}\alpha^{-\frac{1}{p+1}}.
		\end{align*}
		
		For $N=2$, using the radial lemma \cite[Lemma 2.5]{Bonheure2008},
		\begin{equation*}
			|u(x)|^{p+1}\leq (2\pi)^{-\frac{p+1}{2}}|\ln|x||^{\frac{p+1}{2}}\|Du\|_{L^2}^{p+1}, \quad \text{for all } \quad x \in B\backslash\{0\}.
		\end{equation*}
		Thus, proceeding as in the case $N\geq 3$, it follows that
		\begin{align*}
			\frac{4^{\frac{1}{p+1}}\alpha^{\frac{1}{p+1}}(2\pi)}{\left(2\pi \sqrt{\pi} \int_{-1}^{1}\left(\frac{s+1}{2}\right)\left| \ln\left(\frac{s+1}{2}\right)\right|^{\frac{p+1}{2}} \varphi_\alpha(s)ds\right)^\frac{2}{p+1}}\leq S_{\alpha,rad}.
		\end{align*}
		Therefore, by Lemma \ref{lemma aprox de weierstrass},
		\begin{align*}
			\left(\frac{16(2\pi)^{p-1}}{\pi}\right)^{\frac{1}{p+1}}\frac{1}{\ln(2)} \leq	 \liminf_{\alpha\rightarrow \infty} S_{\alpha,rad}\alpha^{-\frac{1}{p+1}}.
		\end{align*}
		Finally, for $N=1$, using Cauchy-Schwarz inequality,
		\begin{equation*}
			|u(x)|= \left|\int_{x}^{sign(x)} u'(t)dt\right|\leq \|u'\|_{L^2} (1- |x|)^{1/2} \quad \text{for all} \quad x\in[-1,1].
		\end{equation*}
		Then, applying the same reasoning of the previous cases, the desired inequality follows.
	\end{proof}

	\begin{proof}[\textbf{Proof of Theorem \ref{teorema estimativa sharp expoente de S alpha,rad em relacao a alpha}}]
		It is a straightforward consequence of Propositions \ref{proposicao limitacao superior S alpha rad} and \ref{proposicao limitacao inferior S_alpha,rad}.
	\end{proof}
	
	\begin{proof}[\textbf{Proof of Theorem \ref{teorema quebra de simetria}}]
		
		Since $p>\frac{N}{N-2}$ implies $\frac{1}{p+1}>1+ \frac{N}{p+1}-\frac{N}{2}$, it follows from Lemma \ref{lemma limitacao de S_alpha por cima em funcao de alpha} and Theorem \ref{teorema estimativa sharp expoente de S alpha,rad em relacao a alpha} that $S_\alpha<S_{\alpha,rad}$ for all $\alpha$ large.
		On the other hand, from \eqref{eq util relacao entre S _alpha,rad e C_alpha,rad},
		\[ S_\alpha \leq S_{\alpha,rad} \text{ if, and only if, } C_\alpha\leq C_{\alpha,rad}.\]
		Therefore, $C_\alpha < C_{\alpha,rad}$ and ground state solutions are not radial for all $\alpha$ large.
	\end{proof}

	\section{$L^\infty$-norm estimates for $u_{\alpha,rad}$}\label{sec: estimativas norma L infinito}
	In this section, the growth of $\|u_{\alpha,\mathrm{rad}}\|_\infty$ with respect to $\alpha$ is estimated. This analysis is useful for understanding the asymptotic behavior of $u_{\alpha,\mathrm{rad}}$ through the study of the normalized function $u_{\alpha,\mathrm{rad}}/\|u_{\alpha,\mathrm{rad}}\|_\infty$. Since $u_{\alpha,\mathrm{rad}}$ is radial and radially decreasing, it follows that
	\[
	u_{\alpha,\mathrm{rad}}(0)=\|u_{\alpha,\mathrm{rad}}\|_\infty .
	\]

	\begin{prop}\label{prop limitacao por baixo norma do sup sol rad}
		Let $N\geq 1$ and $1<p<2_\alpha^*-1$. There exists a constant $\mathcal{C}_3>0$, depending only on $N$ and $p$, such that 
		\begin{equation*}
			\mathcal{C}_3 \alpha^{\frac{1}{2(p-1)}} \leq  \| u_{\alpha,rad}\|_\infty .
		\end{equation*}
		as $\alpha\rightarrow \infty$.
	\end{prop}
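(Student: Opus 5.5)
We will combine the variational characterization of $u_{\alpha,rad}$ with the sharp lower bound on $S_{\alpha,rad}$ from Theorem \ref{teorema estimativa sharp expoente de S alpha,rad em relacao a alpha}. The key observation is that the energy quantities of $u_{\alpha,rad}$ grow with $\alpha$, while the weighted integral of $u^{p+1}$ is controlled by $\|u_{\alpha,rad}\|_\infty^{p+1}$ times the integral of the weight, which decays like $\alpha^{-1/2}$.

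First, recall that $u_{\alpha,rad} = S_{\alpha,rad}^{\frac{1}{p-1}} U_{\alpha,rad}$ with $\int_B V_\alpha |U_{\alpha,rad}|^{p+1}dx = 1$. Hence
\[
\int_B V_\alpha(|x|)\, u_{\alpha,rad}^{p+1}\,dx = S_{\alpha,rad}^{\frac{p+1}{p-1}}.
\]
Testing the equation against $u_{\alpha,rad}$ also gives $\int_B|Du_{\alpha,rad}|^2 = \int_B V_\alpha u_{\alpha,rad}^{p+1}$, which is consistent with this identity.

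Next, bound the left-hand side from above:
\[
\int_B V_\alpha u_{\alpha,rad}^{p+1}\,dx \le \|u_{\alpha,rad}\|_\infty^{p+1}\int_B V_\alpha(|x|)\,dx.
\]
By Remark \ref{rmk: vendo a integral do nosso peso em funcao da funcao gamma} and Lemma \ref{lemma estimativa dos quocientes com as funcos gama} with $K_1=N$ and $K_2=1$,
\[
\int_B V_\alpha\,dx = |\mathbb{S}^{N-1}|\,\frac{\sqrt{\pi}}{2^{N}}\,\alpha^{-1/2} + O(\alpha^{-3/2}).
\]
For $\alpha$ large this is at most $c_N\alpha^{-1/2}$.

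Combining these with the lower bound $S_{\alpha,rad} \ge \mathcal{C}_1\alpha^{\frac{1}{p+1}}$ from Theorem \ref{teorema estimativa sharp expoente de S alpha,rad em relacao a alpha} (Proposition \ref{proposicao limitacao inferior S_alpha,rad}) gives
\[
\mathcal{C}_1^{\frac{p+1}{p-1}}\,\alpha^{\frac{1}{p-1}} \le c_N\,\alpha^{-1/2}\,\|u_{\alpha,rad}\|_\infty^{p+1}.
\]
This yields
\[
\|u_{\alpha,rad}\|_\infty \ge C\,\alpha^{\frac{1}{p+1}\left(\frac{1}{p-1}+\frac12\right)} = C\,\alpha^{\frac{1}{2(p-1)}},
\]
since $\frac{1}{p-1}+\frac12 = \frac{p+1}{2(p-1)}$. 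The exponent matches the statement exactly, and the constant depends only on $N$ and $p$.

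No step is a serious obstacle; the only point needing care is to confirm that the constants are uniform in $\alpha$. Proposition \ref{proposicao limitacao inferior S_alpha,rad} only gives a $\liminf$, so we will take $\mathcal{C}_1$ to be half the explicit constant there, valid for $\alpha$ large. We also need the normalization identity to hold for the positive minimizer, which is guaranteed by Theorem \ref{teo existencia de minimizantes}(i).
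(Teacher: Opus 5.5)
Your proof is correct and follows the same route as the paper: bound $\int_B V_\alpha u_{\alpha,rad}^{p+1}\,dx = S_{\alpha,rad}^{\frac{p+1}{p-1}}$ from above by $\|u_{\alpha,rad}\|_\infty^{p+1}\int_B V_\alpha\,dx$, evaluate $\int_B V_\alpha\,dx \sim \frac{\sqrt{\pi}\,|\mathbb{S}^{N-1}|}{2^N}\alpha^{-1/2}$ via the Beta/Gamma asymptotics, and use the lower bound $S_{\alpha,rad}\ge \mathcal{C}_1\alpha^{\frac{1}{p+1}}$. Your tracking of the exponent $\frac{p+1}{p-1}$ on $\mathcal{C}_1$, and of the fact that Proposition \ref{proposicao limitacao inferior S_alpha,rad} only gives a $\liminf$, is slightly more careful than the paper's write-up, which absorbs both points into the constant.
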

	\begin{proof}
		Notice that
		\begin{equation*}
			\| u_{\alpha,rad}\|^{p+1}_\infty \geq \frac{\int_B V_\alpha(|x|)u_{\alpha,rad}^{p+1}dx}{\int_B V_\alpha(|x|)dx}.
		\end{equation*}

		Since $u_{\alpha, rad}$ is a \emph{ground state} radial solution,
		\[\int_B |Du_{\alpha,rad}|^2 dx = \int_B V_\alpha(|x|)u_{\alpha,rad}^{p+1}dx = S_{\alpha,rad}^{\frac{p+1}{p-1}}.\]
		
		Using Theorem \ref{teorema estimativa sharp expoente de S alpha,rad em relacao a alpha}  one obtains
		
		\begin{equation}\label{usando teo limt Salpha p limitar norma de u L infity}
			\mathcal{C}_1\alpha^{\frac{1}{p-1}}\leq \int_B |Du_{\alpha,rad}|^2 dx = \int_B V_\alpha(|x|)u_{\alpha,rad}^{p+1}dx\leq \mathcal{C}_2\alpha^{\frac{1}{p-1}}.
		\end{equation}
		
		Thus, as in Remark \ref{rmk: vendo a integral do nosso peso em funcao da funcao gamma}, using Lemma \ref{lemma estimativa dos quocientes com as funcos gama}, 
		\begin{align*}
			\| u_{\alpha,rad}\|_\infty^{p+1}\geq \frac{\mathcal{C}_1 \alpha^{\frac{1}{p-1}}}{\int_{B} V_\alpha(|x|) dx} = \frac{\mathcal{C}_1 \alpha^{\frac{1}{p-1}}}{\alpha^{-\frac{1}{2}}\left(\frac{\sqrt{\pi}|\mathbb{S}^{N-1}|}{2^N}+ O (\alpha^{-1})\right)}		
		\end{align*}
		and for a constant $\mathcal{C}>0$, one has that
		\begin{equation*}
			\| u_{\alpha,rad}\|_\infty \geq \left(\mathcal{C} \alpha^{\frac{p+1}{2(p-1)}} \right)^{\frac{1}{p+1}} = \mathcal{C}^{\frac{1}{p+1}} \alpha^{\frac{1}{2(p-1)}}. \qedhere
		\end{equation*}
	\end{proof}
	
	\begin{prop}\label{prop limitacao por cima norma do sup sol rad}
		Let $N\geq 1$ and $1<p<2_\alpha^*-1$. There exists a constant $\mathcal{C}_4>0$, depending only on $N$ and $p$, such that 
		\begin{equation}\label{desejoestimativacima}
			\| u_{\alpha,rad}\|_\infty \leq \mathcal{C}_4 \alpha^{\frac{1}{2(p-1)}}
		\end{equation}
		as $\alpha\rightarrow \infty$.
	\end{prop}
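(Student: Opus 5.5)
We need an upper bound $\|u_{\alpha,rad}\|_\infty = u(0) \le C\alpha^{1/(2(p-1))}$. Write $u=u_{\alpha,rad}$ and $M=u(0)$. The plan is to combine the Green representation \eqref{eq: green function integral representation} with the energy bound \eqref{usando teo limt Salpha p limitar norma de u L infity} and the radial lemma, which controls $u$ away from the origin.

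\textbf{Step 1 (control away from the origin).} Using the radial lemmas of Proposition \ref{proposicao limitacao inferior S_alpha,rad} together with \eqref{usando teo limt Salpha p limitar norma de u L infity}, for $N\ge3$ we get
\[
u(r)\le C\|Du\|_{L^2} r^{-\frac{N-2}{2}}\le C\alpha^{\frac{1}{2(p-1)}} r^{-\frac{N-2}{2}} .
\]
For $N=2$ the factor $r^{-\frac{N-2}{2}}$ is replaced by $|\ln r|^{1/2}$, and for $N=1$ by $(1-r)^{1/2}$. Hence on any annulus $r\ge\delta$ we already have $u\le C_\delta\,\alpha^{\frac{1}{2(p-1)}}$, which is the desired order. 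It remains to control $u$ near $0$.

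\textbf{Step 2 (reduce $u(0)$ to integrals).} Use the representation at $t=0$:
\[
u(0)=\int_0^1 s^{N-1}G_N(0,s)V_\alpha(s)u^p(s)\,ds .
\]
Split the integral into $[0,\delta]$ and $[\delta,1]$. On $[\delta,1]$ the Green kernel $G_N(0,s)$ is bounded, and $u^p\le C\alpha^{\frac{p}{2(p-1)}}$ by Step 1. Moreover $\int_\delta^1 s^{N-1}V_\alpha\,ds\le C\alpha^{-1/2}$ by Remark \ref{rmk: vendo a integral do nosso peso em funcao da funcao gamma}. Since $\frac{p}{2(p-1)}-\frac12=\frac{1}{2(p-1)}$, this piece is $O(\alpha^{\frac{1}{2(p-1)}})$. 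On $[0,\delta]$ with $\delta<1/2$ we have $V_\alpha(s)\le(4\delta(1-\delta))^\alpha=:q^\alpha$ with $q<1$, and $u\le M$. The kernel factor $s^{N-1}G_N(0,s)$ is integrable: it behaves like $s$ for $N\ge3$ and like $s|\ln s|$ for $N=2$. So this piece is at most $C q^\alpha M^p$.

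\textbf{Step 3 (close the estimate).} Steps 1 and 2 give
\[
M\le C\alpha^{\frac{1}{2(p-1)}}+Cq^\alpha M^p .
\]
Because $p>1$, this inequality alone does not force $M$ small; the large branch $M\gtrsim q^{-\alpha/(p-1)}$ must be excluded. This is the main obstacle.

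To exclude it, I would avoid putting $M^p$ in the inner term. Instead, integrate the ODE directly: $-(r^{N-1}u')'=r^{N-1}V_\alpha u^p$ with $u'\le0$, so for $r\le\delta$,
\[
u(0)-u(\delta)\le\int_0^\delta \frac{1}{r^{N-1}}\int_0^r s^{N-1}V_\alpha(s)u^p\,ds\,dr\le C q^\alpha M^p .
\]
This still involves $M^p$, so I would run a continuity argument in $\alpha$. Along the branch of ground states, either use the continuity of $M$ in $\alpha$, or prove a direct a priori bound such as the Hénon-type bound $M\le C^\alpha$ via blow-up or the Pohozaev identity. Such a bound makes $q^\alpha M^p$ negligible once $\delta$ is chosen small enough that $q\,C^{p}<1$.

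A cleaner alternative that I would try first combines the energy identity with a lower bound at the maximum. If $u(0)=M$, then since $u(\delta)\le C_\delta\alpha^{\frac{1}{2(p-1)}}$, the Dirichlet energy on $B_\delta$ is at least of order $(M-u(\delta))^2$ for $N\le2$. For $N\ge3$ one instead uses the capacity of $B_{\delta/2}$ together with the fact that $u\ge u(\delta/2)$ there, reducing to showing $u(\delta/2)\gtrsim M$. That last reduction again follows from the ODE estimate, since $u(0)-u(\delta/2)\le Cq^\alpha M^p$ is small relative to $M$ as long as $M\le q^{-\alpha/(2(p-1))}$, say. Combined with $\|Du\|_2^2\le C\alpha^{1/(p-1)}$, this yields $M\le C\alpha^{\frac{1}{2(p-1)}}$ on that range.
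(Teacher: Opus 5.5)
Your Steps 1 and 2 are correct. Set $\lambda:=\alpha^{-\frac{1}{2(p-1)}}u_{\alpha,rad}(0)$. Then they give the dichotomy $\lambda\le C+C\alpha^{1/2}q^\alpha\lambda^p$. For $N=1$, Step 1 alone already finishes, since $(1-|x|)^{1/2}\le 1$.

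\textbf{The gap.} The whole content of the proposition is to rule out the large alternative $\lambda\gtrsim(\alpha^{1/2}q^\alpha)^{-1/(p-1)}$, and Step 3 does not do this.
\begin{itemize}
\item The continuity argument is unavailable. Ground states are not known to be unique, so there is no continuous branch $\alpha\mapsto u_{\alpha,rad}$. Compactness at best says that limits of ground states are ground states. That is only upper semicontinuity: it neither prevents the chosen minimizers from lying in the large regime nor supplies an initial $\alpha$ where the small regime is known.
\item The bound $M\le C^\alpha$ ``via blow-up or Pohozaev'' is only asserted. A blow-up argument at fixed $\alpha$ gives $M\le C(\alpha)$ with no control on the $\alpha$-dependence, and controlling that dependence is precisely what is to be proved.
\item Your ``cleaner alternative'' is circular. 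It needs $u(\delta/2)\gtrsim M$, which you only get under $M\le q^{-\alpha/(2(p-1))}$, i.e.\ after the large branch has been excluded.
\item For $N=2$, the claim that the Dirichlet energy on $B_\delta$ controls $(M-u(\delta))^2$ is false. Points have zero capacity in the plane, so you need a lower bound on the radius of the region where $u$ stays comparable to $M$.
\end{itemize}

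\textbf{The missing idea.} You need exactly such a radius lower bound, one that does not presuppose $M$ small. The paper gets it from the uniform Lipschitz bound $|v_\alpha'|\le C\|v_\alpha\|_\infty^p$, which follows from $\alpha^{1/2}\int_0^1 V_\alpha=O(1)$. In dimension $N$, this crude bound combined with the energy only helps when $p<N/(N-2)$. So the paper first substitutes $r=t^{(M-2)/(N-2)}$ to pass to an auxiliary dimension $M$ with $p<M/(M-2)$. There a blow-up rescaling has energy of order $\|v\|_\infty^{(M-2)p-M}\to 0$, contradicting $\bar v(0)=1$ for equi-Lipschitz $\bar v$. For $N=2$ the paper uses a $W^{1,p+1}$ bound and Morrey.

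\textbf{A repair inside your framework.} You could instead exploit $V_\alpha(s)\le (4s)^\alpha$ near the origin. For $w=\alpha^{-\frac{1}{2(p-1)}}u_{\alpha,rad}$ and $\lambda=w(0)\ge 1$,
\[
w(0)-w(\rho)\le \frac{\alpha^{1/2}\lambda^p(4\rho)^{\alpha}\rho^{2}}{(\alpha+N)(\alpha+2)}\le\frac{\lambda}{2}\quad\text{for } \rho=\tfrac14\lambda^{-\frac{p-1}{\alpha+2}} .
\]
Cauchy--Schwarz on $[\rho,1]$ then gives, for $N\ge 3$,
\[
\int_B|Dw|^2\ge c_N\lambda^{2-\frac{(p-1)(N-2)}{\alpha+2}} .
\]
The exponent is positive exactly when $p<2^*_\alpha-1$, and it is at least $1$ for large $\alpha$. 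For $N=2$ the same argument gives $\int_B|Dw|^2\ge c\lambda^2/\bigl(\ln 4+\tfrac{p-1}{\alpha+2}\ln\lambda\bigr)$. In both cases, together with $\int_B|Dw|^2\le C$, this bounds $\lambda$ uniformly in $\alpha$.
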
	
	\begin{proof} 
		The proof is inspired by \cite[Lemma~3.2]{LeitedaSilva2021}. Let $M>2$ be such that $p<M/(M-2)$. 
		
		\textbf{Case $N >2$.} Set 
		\begin{equation}\label{mudanca boa limt por baix norm L infity}
			v_\alpha(t) = \frac{1}{\alpha^{\frac{1}{2(p-1)}}} u_{\alpha,rad}\left(t^{\frac{M-2}{N-2}}\right),\quad t\in [0,1].
		\end{equation}
		
		One has that $v_\alpha$ satisfies 
		\begin{equation*}
			\begin{cases}
				\frac{-1}{t^{M-1}}(t^{M-1}v'_\alpha)' = \alpha^{\frac{1}{2}}\left(\frac{M-2}{N-2}\right)^2t^{2\left(\frac{M-2}{N-2}-1\right)}\left(4t^{\frac{M-2}{N-2}}\left(1- t^{\frac{M-2}{N-2}}\right)\right)^\alpha v_\alpha^p\quad \text{in }(0,1),\\
				v_\alpha(1) = v'_\alpha(0)=0.
			\end{cases}
		\end{equation*}
		Thus
		\begin{equation}\label{express v'}
			v'_\alpha(t) =  \frac{-1}{t^{M-1}}\alpha^{\frac{1}{2}}\left(\frac{M-2}{N-2}\right)^2\int_{0}^{t}s^{M-1}s^{2\left(\frac{M-2}{N-2}-1\right)}\left(4s^{\frac{M-2}{N-2}}\left(1- s^{\frac{M-2}{N-2}}\right)\right)^\alpha v_\alpha^p(s) ds.
		\end{equation}
		Using \eqref{usando teo limt Salpha p limitar norma de u L infity} and \eqref{mudanca boa limt por baix norm L infity}, there exists a positive constant $\hat C$ such that 
		\begin{align}\label{estimate v'}
			\int_{0}^{1}|v'_\alpha|^2(t)t^{M-1} dt &= \left(\frac{M-2}{N-2}\right)^2\frac{1}{\alpha^{\frac{1}{p-1}}}\int_{0}^{1}|u'_{\alpha,rad}|^2\left(t^{\frac{M-2}{N-2}}\right)t^{2\left(\frac{M-2}{N-2}-1\right)}t^{M-1}dt\nonumber \\
			&= \left(\frac{M-2}{N-2}\right)\frac{1}{\alpha^{\frac{1}{p-1}}}\int_{0}^{1}|u'_{\alpha,rad}|^2(r)r^{N-1}dr \nonumber\\
			&\leq \hat{C}\left(\frac{M-2}{N-2}\right).
		\end{align}
		Suppose, by contradiction, that the upper bound \eqref{desejoestimativacima} does not hold. Then there exists a sequence $(\alpha_n)$, with $\alpha_n \rightarrow \infty$, such that  $\|v_{\alpha_n}\|_\infty \rightarrow \infty$ as $n\rightarrow\infty$. Define 
		\begin{equation}\label{mudanca blowup}
			\overline{v}_{\alpha_n}(s) = \frac{1}{\|v_{\alpha_n}\|_\infty}v_{\alpha_n}\left(\frac{s}{\|v_{\alpha_n}\|_\infty^{p-1}}\right), \quad \text{for } s\in[0,\|v_{\alpha_n}\|_\infty^{p-1}].
		\end{equation}
		Thus, $\|\overline{v}_{\alpha_n}\|_\infty =1$ and by \eqref{estimate v'}
		\begin{align}\label{estimate v bar}
			\int_{0}^{\|v_{\alpha_n}\|_\infty^{p-1}}|\overline{v}'_{\alpha_n}|^2(s)s^{M-1}ds &=\|v_{\alpha_n}\|_\infty^{-2p}\int_{0}^{\|v_{\alpha_n}\|_\infty^{p-1}}|v'_{\alpha_n}|^2\left(\frac{s}{\|v_{\alpha_n}\|_\infty^{p-1}}\right)s^{M-1}ds \nonumber\\
			&= \|v_{\alpha_n}\|_\infty^{(M-2)p-M}\int_{0}^{1}|v'_{\alpha_n}|^2(t)t^{M-1}dt\nonumber\\
			&\leq \hat{C}\left(\frac{M-2}{N-2}\right)\|v_{\alpha_n}\|_\infty^{(M-2)p-M}.
		\end{align}
		Since, by the choice of $M$, $(M-2)p-M <0$, the last term in \eqref{estimate v bar} goes to 0 as $\alpha \to \infty$.
		Therefore one concludes that $\overline{v}_{\alpha_n}\rightarrow 0$ in $D^{1,2}_{M}(0,\infty)$, as in \cite[Lemma~3.2]{LeitedaSilva2021}, where
		\begin{equation*}\label{espaco D Mepsilon}
			D^{1,2}_{M} (0, \infty)\coloneq \left\{\begin{split}
				w\in L^{2^*_{M}}_{M}(0,\infty)
			\end{split}; \begin{split}
				&w \text{ has first order derivative and}\\
				\|&w\|^2_{D^{1,2}_{M}} \coloneq \int_{0}^{\infty}|w'(t)|^2 t^{M-1}dt <\infty 
			\end{split} \right\}
		\end{equation*}
		and 
		\[L^{2^*_{M}}_{M}(0,\infty) \coloneq \left\{w:[0,\infty)\rightarrow \mathbb{R}\text{ measurable }; \int_{0}^{\infty}|w(t)|^{\frac{2M}{M -2}} t^{M-1} dt < \infty \right\}. \]
		
		The next step is to show that $|\overline{v}'_{\alpha_n}(s)|$ is uniformly bounded from above. Notice that
		\begin{equation}\label{novaderivada}
			|\overline{v}'_{\alpha_n}|(s) = \|v_{\alpha_n}\|_\infty^{-p}\left|v'_{\alpha_n}\left(\frac{s}{\|v_{\alpha_n}\|_\infty^{p-1}}\right)\right|,
		\end{equation}
		and by \eqref{express v'}, for any $s\le \|v_{\alpha_n}\|_\infty^{p-1}$, one has
		\begin{align}\label{novaderivada2}
			\left|v'_{\alpha_n}\left(\frac{s}{\|v_{\alpha_n}\|_\infty^{p-1}}\right)\right| &= \alpha_n^{\frac{1}{2}}\left(\frac{M-2}{N-2}\right)^2\left(\frac{1}{\frac{s}{\|v_{\alpha_n}\|_\infty^{p-1}}}\right)^{M-1}\hspace{12em} \nonumber\\
			& \hspace{1cm} \times \int_{0}^{\frac{s}{\|v_{\alpha_n}\|_\infty^{p-1}}}t^{M-1}t^{2\left(\frac{M-2}{N-2}-1\right)}\left(4t^{\frac{M-2}{N-2}}\left(1- t^{\frac{M-2}{N-2}}\right)\right)^{\alpha_n} v_{\alpha_n}^p(t) dt \nonumber \\
			&\leq  \alpha_n^{\frac{1}{2}}\left(\frac{M-2}{N-2}\right)^2\|v_{\alpha_n}\|_\infty^p\int_{0}^{1}t^{2\left(\frac{M-2}{N-2}-1\right)}\left(4t^{\frac{M-2}{N-2}}\left(1- t^{\frac{M-2}{N-2}}\right)\right)^{\alpha_n}  dt \nonumber\\
			&= \left(\frac{M-2}{N-2}\right)\|v_{\alpha_n}\|_\infty^p\alpha_n^{\frac{1}{2}}\int_{0}^{1}4^{\alpha_n} r^{\alpha_n+\frac{2-N}{M-2}+1}(1-r)^{\alpha_n} dr\hspace{5.9em} \nonumber\\
			&=\left(\frac{M-2}{N-2}\right)\|v_{\alpha_n}\|_\infty^p\alpha_n^{\frac{1}{2}} 4^{\alpha_n} \frac{\Gamma(\alpha_n +\frac{2-N}{M-2}+2)\Gamma(\alpha_n+1)}{\Gamma(\alpha_n+\frac{2-N}{M-2}+3)}\hspace{5.9em}	\nonumber \\
			&=\left(\frac{M-2}{N-2}\right)\|v_{\alpha_n}\|_\infty^p \frac{\sqrt{\pi}}{2^{\frac{N-2}{M-2}+2}}\left(1 + O(\alpha_n^{-1})\right), \hspace{9.2em}
		\end{align}
		where, in the last equality, Lemma \ref{lemma estimativa dos quocientes com as funcos gama} is used. Hence, from \eqref{novaderivada} and \eqref{novaderivada2}
		\begin{equation}\label{v' limitada}
			|\overline{v}'_{\alpha_n}|(s) < \left(\frac{M-2}{N-2}\right) \frac{\sqrt{\pi}}{2^{\frac{2-N}{M-2}+1}},
		\end{equation}
		as $n\rightarrow\infty$, for any $s$.
		
		Recall that, as a consequence of \eqref{estimate v bar}, $\overline{v}_{\alpha_n}\rightarrow 0$ in $D^{1,2}_{M}(0, \infty)$. By Sobolev embeddings, the inclusion $D^{1,2}_{M}(0,\infty) \subset L^{2^*_{M}}_{M}(0,\infty)$ is continuous, thus, taking a subsequence if necessary, $\overline{v}_{\alpha_n}\rightarrow 0$ a.e on $[0,\infty)$. But $\|\overline{v}_{\alpha_n}\|_\infty =1$ and by \eqref{v' limitada} it follows that $|\overline{v}'_{\alpha_n}|$ is uniformly bounded. Thus, by the Arzelà-Ascoli Theorem, see \cite[Theorem~4.44]{Folland1999}, there exists a subsequence of $\overline{v}_{\alpha_n}$ whose restriction to $[0,1]$ converges uniformly to a function $v\in C([0,1], \mathbb{R})$. Therefore $v$ is a continuous function such that $\|v\|_\infty =1$, contradicting the fact that $\overline{v}_{\alpha_n}\rightarrow 0$ a.e on $[0,\infty)$.

		\textbf{Case $N =2$.} Define
		\begin{equation*}
			v_\alpha(t) = \frac{1}{\alpha^{\frac{1}{2(p-1)}}} u_{\alpha,rad}\left(t\right),\quad t\in [0,1].
		\end{equation*}
		By Gagliardo--Nirenberg--Sobolev inequality \cite[Theorem~1, Section~5.6]{2010_Evans_BOOK}, there exists a positive constant $\mathcal{K}$ such that
		\begin{equation}\label{usando GaglNirSob}
			\|v_\alpha\|_{L^2} \leq \mathcal{K}\|Dv_\alpha\|_{L^1}.
		\end{equation}
		By Hölder's inequality, it follows that
		\begin{align*}
			\int_B |Dv_\alpha|dx = 2\pi\int_{0}^{1}|v'_\alpha|(t)t dt \leq \sqrt{2}\pi \left(\int_{0}^{1}|v'_\alpha|^2(t)tdt\right)^{\frac{1}{2}} 
		\end{align*}
		and, using the definition of $v_\alpha$ and \eqref{usando teo limt Salpha p limitar norma de u L infity}, one has
		\begin{align}\label{eq limitando norma L2 grad, N=2}
			\int_{0}^{1}|v'_\alpha|^2(t)tdt = \frac{1}{\alpha^{\frac{1}{p-1}}}\int_{0}^{1}|u'_{\alpha,rad}|^2(t) t dt \leq \frac{\mathcal{C}_2}{2\pi}.
		\end{align}
		Thus
		\begin{equation}\label{eq limitando norma L1 do grad, N=2}
			\int_B |Dv_\alpha|dx  \leq \pi^{\frac{1}{2}}\mathcal{C}_2^{\frac{1}{2}}
		\end{equation}
		and by \eqref{usando GaglNirSob} and \eqref{eq limitando norma L1 do grad, N=2} it follows
		\begin{equation}\label{limitacao por cima norma L2 independende de alpha, N=2}
			\|v_\alpha\|_{L^2} \leq \mathcal{K}\pi^{\frac{1}{2}}\mathcal{C}_2^{\frac{1}{2}}.
		\end{equation}
		Moreover, from \eqref{eq limitando norma L2 grad, N=2} and \eqref{limitacao por cima norma L2 independende de alpha, N=2} one obtains
		\begin{equation}\label{limitando norma W1,2 por cima independente de alpha, N=2}
			\|v_\alpha\|_{W^{1,2}(B)} \leq \left((\mathcal{K}^2\pi+1)\mathcal{C}_2\right)^{\frac{1}{2}}.
		\end{equation}
		Notice that the upper bound of \eqref{limitando norma W1,2 por cima independente de alpha, N=2} is \emph{independent} of $\alpha$. By Rellich--Kondrachov Theorem \cite[Theorem~9.16]{2011_Brezis_BOOK} one has that $W^{1,2}(B)\subset L^{q}(B)$ for all $q\in [1,\infty)$, in particular $W^{1,2}(B)\subset L^{p+1}(B)$. Therefore by \eqref{limitando norma W1,2 por cima independente de alpha, N=2} there exists a constant $\mathcal{C}>0$ independent of $\alpha$ such that
		\begin{equation}\label{eq limitacao v alpha norma L p+1, N=2}
			\|v_\alpha\|_{L^{p+1}} \leq \mathcal{C}.
		\end{equation}
		Since $v_\alpha$ satisfies
		\begin{equation*}
			\begin{cases}
				-\frac{1}{t}(tv'_\alpha)' = \alpha^{\frac{1}{2}}(4t(1-t))^\alpha v_\alpha^p,\quad \text{in }(0,1)\\
				v_\alpha(1) = v'_\alpha(0)=0,
			\end{cases}
		\end{equation*}
		by Hölder's inequality,
		\begin{align*}
			|v'_\alpha(t)|^{p+1} =& \left(\frac{1}{t}\int_{0}^{t}\alpha^{\frac{1}{2}}s(4s(1-s))^\alpha v^p_\alpha(s)ds  \right)^{p+1}\\
			&\leq \left(\frac{1}{t}\left(\int_{0}^{t}\alpha^{\frac{1}{2}}s(4s(1-s))^\alpha v_\alpha^{p+1}ds \right)^\frac{p}{p+1}\left(\int_{0}^{t}\alpha^{\frac{1}{2}}s(4s(1-s))^\alpha ds\right)^{\frac{1}{p+1}} \right)^{p+1}.
		\end{align*}
		Thus
		\begin{align}\label{eq util para limitar norma L p+1 v' alpha, N=2}
			|v'_\alpha(t)|^{p+1}t &\leq  \frac{1}{t^{p}}\left(\int_{0}^{t}\alpha^{\frac{1}{2}}s(4s(1-s))^\alpha v_\alpha^{p+1}ds \right)^p \int_{0}^{t}\alpha^{\frac{1}{2}}s(4s(1-s))^\alpha ds\nonumber\\
			&\leq \left(\int_{0}^{t}\alpha^{\frac{1}{2}}s(4s(1-s))^\alpha v_\alpha^{p+1}ds \right)^p\alpha^{\frac{1}{2}}4^\alpha\int_{0}^{t}s^{\alpha+1-p}(1-s)^\alpha ds\nonumber\\
			&\leq \left(\int_{0}^{1}\alpha^{\frac{1}{2}}s(4s(1-s))^\alpha v_\alpha^{p+1}ds \right)^p\alpha^{\frac{1}{2}}4^\alpha\int_{0}^{1}s^{\alpha+1-p}(1-s)^\alpha ds.
		\end{align}
		Using the definition of $v_\alpha$ and \eqref{usando teo limt Salpha p limitar norma de u L infity} it follows that
		\begin{equation*}
			\int_{0}^{1}\alpha^{\frac{1}{2}}s(4s(1-s))^\alpha v_\alpha^{p+1}ds = \alpha^{-\frac{1}{p-1}}\int_{0}^{1}s(4s(1-s))^\alpha u_{\alpha,rad}^{p+1}\;ds  \leq \frac{\mathcal{C}_2}{2\pi}.
		\end{equation*}
		Using Lemma \ref{lemma estimativa dos quocientes com as funcos gama} one obtains
		\begin{equation*}
			\alpha^{\frac{1}{2}}4^\alpha\int_{0}^{1}s^{\alpha+1-p}(1-s)^\alpha ds = \frac{\sqrt{\pi}}{2^{2-p}} + O(\alpha^{-1}).
		\end{equation*}
		Therefore, by \eqref{eq util para limitar norma L p+1 v' alpha, N=2}, one has
		\begin{equation}\label{eq limitacao v' alpha norma L p+1, N=2}
			\int_{0}^{1}|v'_\alpha(t)|^{p+1}t dt \leq \left(\frac{\mathcal{C}_2}{2\pi}\right)^p\frac{\sqrt{\pi}}{2^{1-p}}.
		\end{equation}
		Notice that the constant on the right hand side of \eqref{eq limitacao v' alpha norma L p+1, N=2} is \emph{independent} of $\alpha$. From \eqref{eq limitacao v alpha norma L p+1, N=2} and \eqref{eq limitacao v' alpha norma L p+1, N=2} it follows that there exists a constant $K>0$ independent of $\alpha$ such that
		\begin{equation}\label{eq limitacao norma W 1 p+1, N=2}
			\|v_\alpha\|_{W^{1,p+1}(B)} \leq K.
		\end{equation}
		Since $p+1>2$, by Morrey's inequality \cite[Theorem~9.12]{2011_Brezis_BOOK} and \eqref{eq limitacao norma W 1 p+1, N=2} there exist a constant $\mathcal{C}_4>0$ independent of $\alpha$ and such that
		\begin{equation*}
			\|v_\alpha\|_{\infty}\leq \mathcal{C}_4.
		\end{equation*}
		This proves the case $N=2$.
		
		\textbf{Case $N =1$.} As in the case $N=2$, set
		\begin{equation*}
			v_\alpha(t) = \frac{1}{\alpha^{\frac{1}{2(p-1)}}} u_{\alpha,rad}\left(t\right),\quad t\in [-1,1].
		\end{equation*}
		By \eqref{usando teo limt Salpha p limitar norma de u L infity} it follows that
		\begin{equation*}
			\|v'_\alpha\|_{L^2}\leq \mathcal{C}_2^{\frac{1}{2}}.
		\end{equation*}
		Then, Poincaré's inequality \cite[Proposition~8.13]{2011_Brezis_BOOK} and Sobolev embedding \cite[Theorem~8.8]{2011_Brezis_BOOK} imply that there exists a constant $\mathcal{C}_4$ independent of $\alpha$ such that
		\begin{equation*}
			\|v_\alpha\|_\infty \leq \mathcal{C}_4.
		\end{equation*}
		This ends the proof.	
	\end{proof}

	\section{Asymptotic profile of ground state radial solutions}\label{sec: comportamento assitn solucao radial ground state}

	Set
	\begin{equation}\label{mudanca de var util usada na aprx da derivada em 1}
		y_\alpha(x) := \frac{u_{\alpha,rad}(x)}{\left(\frac{\alpha^{\frac{1}{2}}}{\sqrt{\pi}}\right)^{\frac{1}{p-1}}}.
	\end{equation}
	Thus 
	\begin{equation}\label{eq: equacao q y alpha satisfaz}
		-\Delta y_\alpha(x)=\left(\frac{\alpha^{\frac{1}{2}}}{\sqrt{\pi}}\right)(4|x|(1-|x|))^\alpha y_\alpha^p(x), \quad \forall \, x \in B.
	\end{equation}
	
	\begin{lema}\label{lemma util que usa a aproximacao de weierstrass}
		Let $N\geq 1$ and $1<p<2_\alpha^*-1$ and $y_{\alpha}$ be as in \eqref{mudanca de var util usada na aprx da derivada em 1}. Then
		\begin{equation}
			\lim_{\alpha\rightarrow\infty}\!\! \frac{\alpha^{\frac{1}{2}}}{\sqrt{\pi}}\left|\int_{-1}^{1}(1-t^2)^\alpha(t+1)^{N-1}y_\alpha^p\left(\frac{t+1}{2}\right)dt - y_\alpha^p\left(\frac{1}{2}\right)\int_{-1}^{1}(1-t^2)^\alpha(1-t)dt \right| =0.
		\end{equation}
	\end{lema}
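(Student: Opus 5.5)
The plan is to reduce the statement to the convolution argument of Lemma \ref{lemma aprox de weierstrass}, applied to an $\alpha$-dependent family of functions. That family must be bounded and equicontinuous near $t=0$ uniformly in $\alpha$.

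\textbf{Reduction.} Set $g_\alpha(t) = (t+1)^{N-1}y_\alpha^p\left(\frac{t+1}{2}\right)$ for $t\in[-1,1]$, and let $\varphi_\alpha$ be as in \eqref{phialpha}. Since $t\mapsto (1-t^2)^\alpha t$ is odd, $\int_{-1}^1(1-t^2)^\alpha(1-t)\,dt=\int_{-1}^1(1-t^2)^\alpha\,dt$. Also $g_\alpha(0)=y_\alpha^p(1/2)$. Hence the quantity inside the limit equals
\[
\left|\int_{-1}^{1}\varphi_\alpha(t)\bigl(g_\alpha(t)-g_\alpha(0)\bigr)\,dt\right|,
\]
and it suffices to show this tends to $0$. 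Note that no normalization error term $g(\alpha)$ appears here, since we subtract $g_\alpha(0)$ inside the integral.

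\textbf{Uniform bounds.} First, $y_\alpha = \pi^{\frac{1}{2(p-1)}}\,u_{\alpha,rad}/\alpha^{\frac{1}{2(p-1)}}$, so Proposition \ref{prop limitacao por cima norma do sup sol rad} gives $\|y_\alpha\|_\infty\le \mathcal{C}$ for $\alpha$ large. Since $N\ge1$ and $0\le t+1\le 2$, this yields $|g_\alpha|\le 2^{N-1}\mathcal{C}^p =: m$ on $[-1,1]$.

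Second, we need a uniform Lipschitz bound for $y_\alpha$ on $[1/4,1]$. From \eqref{eq: equacao q y alpha satisfaz} in radial form,
\[
|y_\alpha'(r)| = r^{1-N}\frac{\alpha^{1/2}}{\sqrt\pi}\int_0^r s^{N-1}(4s(1-s))^\alpha y_\alpha^p\,ds\le 4^{N-1}\mathcal{C}^p\frac{\alpha^{1/2}}{\sqrt\pi}\int_0^1 s^{N-1}(4s(1-s))^\alpha\,ds \quad\text{for } r\ge \tfrac14 .
\]
By Remark \ref{rmk: vendo a integral do nosso peso em funcao da funcao gamma} and Lemma \ref{lemma estimativa dos quocientes com as funcos gama}, the last factor $\alpha^{1/2}4^\alpha\Gamma(\alpha+N)\Gamma(\alpha+1)/\Gamma(2\alpha+N+1)$ is bounded. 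So $y_\alpha$ is uniformly Lipschitz on $[1/4,1]$. Consequently $g_\alpha$, a product of the smooth factor $(t+1)^{N-1}$ with $y_\alpha^p$ (where $y_\alpha$ is bounded and uniformly Lipschitz), is uniformly Lipschitz on $[-1/2,1]$ with some constant $L$ independent of $\alpha$. For $N=1$ the same computation works on $[0,1]$ by evenness. I expect this equicontinuity step to be the main point; everything else is the proof of Lemma \ref{lemma aprox de weierstrass}.

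\textbf{Conclusion.} Given $\epsilon>0$, pick $\delta\in(0,1/2)$ with $L\delta<\epsilon/4$. On $|t|<\delta$ we bound the integrand by $L\delta\,\varphi_\alpha$, whose integral is at most $L\delta(1+O(\alpha^{-1}))<\epsilon/2$. On $\delta\le|t|\le1$ we use $|g_\alpha(t)-g_\alpha(0)|\le 2m$ together with the uniform decay $\varphi_\alpha(t)\le \frac{\alpha^{1/2}}{\sqrt\pi}(1-\delta^2)^\alpha\to0$, so this part is less than $\epsilon/2$ for $\alpha$ large. Together these give the limit $0$.
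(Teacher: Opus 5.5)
Your proof is correct and follows essentially the same route as the paper: a uniform bound on $\|y_\alpha\|_\infty$ and on $y_\alpha'$ (via the integral formula for $y_\alpha'$ and Lemma~\ref{lemma estimativa dos quocientes com as funcos gama}) gives equicontinuity, and then the convolution argument of Lemma~\ref{lemma aprox de weierstrass} is run on the centered function. The paper subtracts $g_\alpha(0)(1-t)$ instead of $g_\alpha(0)$, which differs from your choice only by an odd term with zero integral against $\varphi_\alpha$; your restriction of the Lipschitz bound to $[1/4,1]$ and your explicit $\epsilon$--$\delta$ argument spell out what the paper leaves as ``adapt the proof''.
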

	\begin{proof}
		First notice that from Propositions \ref{prop limitacao por cima norma do sup sol rad} and \ref{prop limitacao por baixo norma do sup sol rad}, for $\alpha$ sufficiently  large, $\sqrt{\pi}^{\frac{1}{p-1}}\mathcal{C}_3\leq \|y_\alpha\|_\infty \leq \sqrt{\pi}^{\frac{1}{p-1}}\mathcal{C}_4$. Using equation \eqref{eq: equacao q y alpha satisfaz} it follows that
		\[y'_\alpha(r) = -\frac{1}{r^{N-1}}\int_{0}^{r}\left(\frac{\alpha^{\frac{1}{2}}}{\sqrt{\pi}}\right)(4s(1-s))^\alpha s^{N-1}y_\alpha^p(s)ds. \]
		Combining this identity with Lemma \ref{lemma estimativa dos quocientes com as funcos gama}, one concludes that 
		\[|y'_\alpha(r)|\leq \frac{\sqrt{\pi}^{\frac{p}{p-1}}\mathcal{C}_4^p}{2}+ O(\alpha^{-1}),\]
		and so $\|y'_\alpha\|_\infty \leq \sqrt{\pi}^{\frac{p}{p-1}}\mathcal{C}_4^p$ for $\alpha$ sufficiently large. 
		
		Furthermore, 
		\[-y'_\alpha(1) = \left(\frac{\alpha^{\frac{1}{2}}}{\sqrt{\pi}}\right)\int_{0}^{1}(4s(1-s))^\alpha s^{N-1}y_\alpha^p(s)ds, \]
		and making the change of variables $s = \frac{t+1}{2}$, 
		\begin{equation}\label{aproximacao util para derivada no 1}
			-y'_\alpha(1) = \left(\frac{\alpha^{\frac{1}{2}}}{2^N\sqrt{\pi}}\right)\int_{-1}^{1}(1-t^2)^\alpha\underbrace{(t+1)^{N-1}y^p_\alpha\left(\frac{t+1}{2}\right)}_{g_\alpha(t)} dt.
		\end{equation}
		Define $\tilde{g}_\alpha(t)\coloneq g_\alpha(t)-g_\alpha(0)-t(g_\alpha(1)-g_\alpha(0))$, so that $\tilde{g}_\alpha(0)=\tilde{g}_\alpha(1)=0$. Moreover,
		\[\tilde{g}_\alpha(t) = (t+1)^{N-1}y_\alpha^p\left(\frac{t+1}{2}\right)-y_\alpha^p\left(\frac{1}{2}\right)(1-t).\]
		Define
		\[P_\alpha(x)= \int_{-1}^{1}\tilde{g}_\alpha(x+t)\varphi_\alpha(t)dt\]
		and $\varphi_\alpha$ as in Lemma \ref{lemma aprox de weierstrass}. Notice that, by the boundedness of $\|y_\alpha\|_\infty,\|y'_\alpha\|_\infty$, the set $\{\tilde{g}_\alpha,\alpha\geq \alpha_0\}$ is equicontinuous, where $\alpha_0$ is taken sufficiently large. Then the equicontinuity property allows one to adapt the proof of Lemma \ref{lemma aprox de weierstrass} and conclude that 
		\[\lim_{\alpha\rightarrow\infty}|P_\alpha(0)-\tilde{g}_\alpha(0)|=0 \]
		and this ends the proof.
	\end{proof}
	
	\begin{remk}
		We use the notation $A_\alpha \approx B_{\alpha}$ to indicate that $|A_\alpha - B_{\alpha}|\rightarrow 0$ as $\alpha \rightarrow \infty$. Observe that the quantities $A_{\alpha}$ and $B_{\alpha}$ in the corollary below are bounded.
	\end{remk}    
	\begin{corl}\label{corolario util na aprox da derivada em 1}
		For $N\geq 1$, $1<p<2_\alpha^*-1$ and $\alpha$ large enough,
		\begin{equation*}
			-y'_\alpha(1) \approx \left(\frac{\alpha^{\frac{1}{2}}}{2^N\sqrt{\pi}} \right) y_\alpha^p\left(\frac{1}{2}\right)\int_{-1}^{1}(1-t^2)^\alpha(1-t)dt.
		\end{equation*}
	\end{corl}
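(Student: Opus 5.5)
The corollary follows by combining the integral identity \eqref{aproximacao util para derivada no 1} with Lemma \ref{lemma util que usa a aproximacao de weierstrass}.

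First, recall from the proof of Lemma \ref{lemma util que usa a aproximacao de weierstrass} that
\[
-y'_\alpha(1) = \frac{\alpha^{\frac12}}{2^N\sqrt{\pi}}\int_{-1}^{1}(1-t^2)^\alpha (t+1)^{N-1}y_\alpha^p\Bigl(\frac{t+1}{2}\Bigr)dt .
\]
Subtracting the right-hand side of the corollary then gives
\[
\Bigl| -y'_\alpha(1) - \frac{\alpha^{\frac12}}{2^N\sqrt{\pi}}\, y_\alpha^p\Bigl(\frac12\Bigr)\int_{-1}^{1}(1-t^2)^\alpha(1-t)\,dt\Bigr|
= \frac{1}{2^N}\cdot\frac{\alpha^{\frac12}}{\sqrt{\pi}}\Bigl|\int_{-1}^{1}(1-t^2)^\alpha\Bigl[(t+1)^{N-1}y_\alpha^p\Bigl(\frac{t+1}{2}\Bigr)-y_\alpha^p\Bigl(\frac12\Bigr)(1-t)\Bigr]dt\Bigr| .
\]
By Lemma \ref{lemma util que usa a aproximacao de weierstrass}, this tends to $0$ as $\alpha\to\infty$. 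The factor $2^{-N}$ is a fixed constant and does not affect the limit. This is exactly the relation $A_\alpha\approx B_\alpha$.

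Second, to match the convention of the preceding remark, I will check that both sides are bounded. For the right-hand side, $\frac{\alpha^{1/2}}{\sqrt\pi}\int_{-1}^1(1-t^2)^\alpha(1-t)\,dt$ equals $\frac{\alpha^{1/2}}{\sqrt\pi}\int_{-1}^1(1-t^2)^\alpha\,dt$, because the odd part integrates to zero. By Lemma \ref{lemma aprox de weierstrass} (equivalently Lemma \ref{lemma estimativa dos quocientes com as funcos gama}), this quantity is $1+O(\alpha^{-1})$. Also, $y_\alpha(1/2)\le\|y_\alpha\|_\infty\le \sqrt{\pi}^{\frac1{p-1}}\mathcal C_4$ by Proposition \ref{prop limitacao por cima norma do sup sol rad}. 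The left-hand side is bounded by the uniform bound on $\|y'_\alpha\|_\infty$ obtained in the proof of the lemma.

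The only genuine difficulty has already been handled in the lemma. That difficulty is the equicontinuity of $\tilde g_\alpha$, which is needed to apply the Weierstrass-type approximation uniformly in $\alpha$. The corollary itself is therefore a direct restatement of the lemma.
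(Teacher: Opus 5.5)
Your proposal is correct and follows the paper's proof: substitute the integral identity \eqref{aproximacao util para derivada no 1} for $-y'_\alpha(1)$ and apply Lemma \ref{lemma util que usa a aproximacao de weierstrass}, since the constant factor $2^{-N}$ does not affect the limit. Your boundedness check is not needed for $A_\alpha\approx B_\alpha$, but it correctly justifies the accompanying remark; it uses the vanishing odd part, $\int_{-1}^1\varphi_\alpha=1+O(\alpha^{-1})$, and the bounds on $\|y_\alpha\|_\infty$ and $\|y'_\alpha\|_\infty$.
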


	\begin{proof}
		It follows from \eqref{aproximacao util para derivada no 1} and Lemma \ref{lemma util que usa a aproximacao de weierstrass}.
	\end{proof}

	\begin{lema}\label{lema da mudanca magica}
		Let $f\in C^2(A, [0,1])$ be such that $|f(t)f'(t)|>0$ for all $t\in A \subset \mathbb{R}$. For $N>1$, if $u$ satisfies 
		\[u''(t) + \frac{N-1}{t}u'(t)= -V(t)u^p(t)\]
		then
		$w(t) = u(f(t))$ satisfies 
		\begin{equation*}
			\left(\frac{f(t)^{N-1}}{f'(t)}w' \right)'= -f(t)^{N-1}f'(t)V(f(t)) w^p(t).
		\end{equation*}
	\end{lema}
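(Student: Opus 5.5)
The identity is a direct chain-rule computation, and the plan is to verify it by expanding the left-hand side and substituting the equation satisfied by $u$. Since $f$ takes values in $[0,1]$ and $f(t)\neq 0$, $f'(t)\neq 0$ on $A$, the quantity $f(t)^{N-1}/f'(t)$ is well defined and $C^1$ on $A$. Set $r=f(t)$.

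From $w(t)=u(f(t))$ we get $w'(t)=u'(f(t))f'(t)$. Therefore
\[
\frac{f(t)^{N-1}}{f'(t)}\,w'(t)=f(t)^{N-1}u'(f(t)).
\]
This cancellation of $f'$ is the key observation: the left-hand side of the claimed identity is simply the radial flux $r^{N-1}u'(r)$ evaluated at $r=f(t)$.

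Next we differentiate this in $t$ with the chain rule:
\[
\bigl(f^{N-1}u'(f)\bigr)'=f'(t)\,\bigl[(N-1)f^{N-2}u'(f)+f^{N-1}u''(f)\bigr]=f'(t)\,\bigl(r^{N-1}u'(r)\bigr)'\big|_{r=f(t)}.
\]
We then use the equation in divergence form. Multiplying $u''+\frac{N-1}{r}u'=-V(r)u^p$ by $r^{N-1}$ gives $(r^{N-1}u')'=-r^{N-1}V(r)u^p(r)$, valid at $r=f(t)$ because $f(t)>0$. Substituting this yields
\[
-f(t)^{N-1}f'(t)V(f(t))u^p(f(t))=-f(t)^{N-1}f'(t)V(f(t))w^p(t),
\]
which is exactly the claimed identity.

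No step is a genuine obstacle. The only care needed is regularity and domain: $f\in C^2$ with $f'\neq0$ and $f>0$ on $A$ guarantees that the quotient is differentiable and that the ODE for $u$ is used at interior points $r=f(t)\in(0,1]$, where $u$ is $C^2$.
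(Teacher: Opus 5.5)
Your proof is correct, but it takes a different route from the paper.

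\textbf{The paper's route.} The paper first works in non-divergence form. It computes $w''+\sigma w'=(f')^2\bigl(u''(f)+\frac{1}{f}u'(f)(\sigma f'+f'')\frac{f}{(f')^2}\bigr)$ and chooses $\sigma=\frac{(N-1)f'}{f}-\frac{f''}{f'}$ so that the bracket becomes the radial operator. It then solves $g'/g=\sigma$ for an integrating factor, obtaining $g=f^{N-1}/f'$, and rewrites $\frac{1}{g}(gw')'=-(f')^2V(f)w^p$.

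\textbf{Your route.} You stay in divergence form throughout. You observe that $\frac{f^{N-1}}{f'}w'=f^{N-1}u'(f)$, so the transformation simply carries the radial flux $r^{N-1}u'(r)$ over to the new variable. One chain-rule differentiation together with $(r^{N-1}u')'=-r^{N-1}Vu^p$ then finishes the argument.

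\textbf{Comparison.} Your version is shorter and never involves $f''$. It therefore works for $f\in C^1$, with the left-hand side read as the derivative of the $C^1$ function $f^{N-1}u'(f)$. The paper's version has a different advantage: it shows how the weight $f^{N-1}/f'$ is found, rather than checking it after the fact.

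\textbf{A small wording point.} $f(t)$ can equal $1$; for instance $f(0)=1$ for the functions used in Step A. So $r=f(t)$ is not always an interior point. The ODE still holds at $r=1$ by continuity, because in the application $u$ is $C^2$ up to $r=1$.
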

	
	\begin{proof}
		Indeed, for a $\sigma$ to be determined, one has
		\begin{align*}
			w''(t) + \sigma w'(t) &=u''(f(t))(f'(t))^2 + u'(f(t))(\sigma f'(t)+f''(t))\\
			&= (f'(t))^2\left(u''(f(t)) + u'(f(t))(\sigma f'(t)+f''(t))\frac{1}{(f'(t))^2} \right)\\	
			&=(f'(t))^2\left(u''(f(t)) + \frac{1}{f(t)}u'(f(t))(\sigma f'(t)+f''(t))\frac{f(t)}{(f'(t))^2} \right) .	 
		\end{align*}
		Take $\sigma$ such that
		\[(\sigma f'(t)+f''(t))\frac{f(t)}{(f'(t))^2} = N-1,\]
		thus
		\begin{equation*}
			\sigma = \frac{f'(t)}{f(t)}(N-1) - \frac{f''(t)}{f'(t)}.
		\end{equation*}
		Therefore, 
		\begin{equation*}
			w''(t) + \left(\frac{f'(t)}{f(t)}(N-1) - \frac{f''(t)}{f'(t)}\right)w'(t) = -(f'(t))^2V(f(t))w^p(t). 
		\end{equation*}
		Next, take $g$ such that
		\[\frac{g'(t)}{g(t)} =\frac{f'(t)}{f(t)}(N-1) - \frac{f''(t)}{f'(t)},\]
		that is,
		\begin{equation*}
			g(t) = \frac{f(t)^{N-1}}{f'(t)}.
		\end{equation*}
		
		Thus
		\begin{equation*}
			\frac{1}{g(t)}\left(g(t)w'\right)' = -(f'(t))^2V(f(t))w^p(t). 
		\end{equation*}
		Rearranging the terms, the proof is complete. 
	\end{proof}
	
	\begin{proof}[\textbf{Proof of Theorem \ref{teorema comport assintotico sol radial}}]
		The proof is split into steps to ease its reading.

		\textbf{Step A. Consider first the case $\mathbf{N\geq 2}$.}

		Let 
		\begin{equation}\label{f magicas dim N>2 e N=2} 
			f(t)=
			\begin{cases}
				e^{-t}, \quad \hspace{5.1em}\text{if }N=2\\
				\left(\frac{1}{(N-2)t+1}\right)^{\frac{1}{N-2}},\quad \text{if }N>2.
			\end{cases}
			t\in [0,+\infty).
		\end{equation}
		Then, $f$ satisfies the hypotheses of Lemma \ref{lema da mudanca magica} and 
		\begin{equation*}
			-\frac{f'(t)}{f(t)^{N-1}}=1.
		\end{equation*} 
		Thus, taking 
		\begin{equation}\label{mudanca de variaveis magica}
			w_\alpha(t) = \frac{1}{\alpha^{\frac{1}{2(p-1)}}}u_{\alpha,rad}(f(t))
		\end{equation}
		one has that
		\begin{equation}\label{eq:problema com a mudanca de variaveis magica dependendo de f}
			-\left(\frac{f(t)^{N-1}}{f'(t)}w'_\alpha \right)'= \alpha^{\frac{1}{2}}f(t)^{N-1}f'(t)(4f(t)(1-f(t)))^\alpha w_\alpha^p(t).
		\end{equation}

		\textbf{Step A.1. Case $\mathbf{N>2}$.} 
		
		Substituting $f(t)$ in \eqref{eq:problema com a mudanca de variaveis magica dependendo de f}, $w_\alpha(t)$ satisfies the following equation for $t$ in $(0,\infty)$:
		\begin{equation}\label{eq pos mudanca de variavel boa N>2}
			\begin{cases}
				-w''_\alpha = \alpha^{\frac{1}{2}}\left(\frac{1}{(N-2)t+1}\right)^{\frac{2(N-1)}{N-2}}\left(4\left(\frac{1}{(N-2)t+1}\right)^{\frac{1}{N-2}}\left(1-\left(\frac{1}{(N-2)t+1}\right)^{\frac{1}{N-2}}\right)\right)^\alpha w_\alpha^p,\\
				w_\alpha(0)= 0 =\lim_{t\rightarrow \infty} w'_\alpha(t).
			\end{cases}
		\end{equation}
		From Propositions \ref{prop limitacao por cima norma do sup sol rad} and \ref{prop limitacao por baixo norma do sup sol rad}  there are constants such that $\mathcal{C}_3 \leq \|w_\alpha\|_{\infty}\leq \mathcal{C}_4 $ for $\alpha> \alpha_0$, where $\alpha_0$ is sufficiently large. Notice that using equation \eqref{eq pos mudanca de variavel boa N>2} one can write $w'_\alpha(t)$ as follows:
		\begin{align}\label{limwlin}
			0\leq w'_\alpha(t) =& \scalebox{1}{$\alpha^{\frac{1}{2}}\int_{t}^{\infty}\left(\frac{1}{(N-2)s+1}\right)^{\frac{2(N-1)}{N-2}}\left(4\left(\frac{1}{(N-2)s+1}\right)^{\frac{1}{N-2}}\left(1-\left(\frac{1}{(N-2)s+1}\right)^{\frac{1}{N-2}}\right)\right)^\alpha w_\alpha^p(s)ds$}\nonumber\\
			&\leq \scalebox{1}{$\alpha^{\frac{1}{2}}\mathcal{C}_4^p\int_{0}^{\infty}\left(\frac{1}{(N-2)s+1}\right)^{\frac{2(N-1)}{N-2}}\left(4\left(\frac{1}{(N-2)s+1}\right)^{\frac{1}{N-2}}\left(1-\left(\frac{1}{(N-2)s+1}\right)^{\frac{1}{N-2}}\right)\right)^\alpha ds$} \nonumber\\
			& =\scalebox{1}{$\alpha^{\frac{1}{2}}\mathcal{C}_4^p \int_{0}^{1}r^{N-1}(4r(1-r))^\alpha dr$}= \frac{\mathcal{C}_4^p \sqrt{\pi}}{2^N} + O(\alpha^{-1}),
		\end{align}
		where Lemma \ref{lemma estimativa dos quocientes com as funcos gama} was used in the last equality. Thus $|w'_\alpha(t)|$ is uniformly bounded and $E_\alpha = \{w_\alpha; \alpha> \alpha_0\}$ is equicontinuous.   Let $(\alpha_n)$ be any sequence such that $\alpha_n\rightarrow \infty$ as $n\rightarrow \infty$; by Arzelà-Ascoli Theorem (see \cite[Theorem~4.44]{Folland1999}), taking a subsequence if necessary, there is a continuous function $w$ such that $w_{\alpha_n}\rightarrow w$ uniformly on any compact subset of $[0,\infty)$. Now one needs to characterize $w$.
		
		Notice that the weight 
		\[\alpha^{\frac{1}{2}}\left(\frac{1}{(N-2)t+1}\right)^{\frac{2(N-1)}{N-2}}\left(4\left(\frac{1}{(N-2)t+1}\right)^{\frac{1}{N-2}}\left(1-\left(\frac{1}{(N-2)t+1}\right)^{\frac{1}{N-2}}\right)\right)^\alpha\]
		appearing in the right hand side of equation \eqref{eq pos mudanca de variavel boa N>2}
		goes to zero uniformly in any interval $[0,a]$ or $[b,\infty)$ with $a<\frac{2^{N-2}-1}{N-2}<b$ and it goes to $\infty$ at $\frac{2^{N-2}-1}{N-2}$, as $\alpha\rightarrow \infty$. Therefore, in order to isolate the singularity and apply Arzelà-Ascoli Theorem again, define the following set.
		
		Given $\rho \in (0,\frac{2^{N-2}-1}{N-2})$, let
		\[ M_\rho \coloneq \left[0,\frac{2^{N-2}-1}{N-2}-\rho\right]\cup\left[\frac{2^{N-2}-1}{N-2}+\rho,\infty\right). \]
		
		By \eqref{eq pos mudanca de variavel boa N>2}, $|w_\alpha''(t)|$ is uniformly bounded for $t\in M_\rho$ and $E'_\alpha|_{M_\rho} =\{w'_\alpha|_{M_\rho}; \alpha> \alpha_0\}$ is equicontinuous.
		
		Similarly, taking a subsequence if necessary, there is a continuous function $g$ (defined on $ M_\rho$)  such that $w'_{\alpha_n} \rightarrow g$ uniformly on any compact subset of $ M_\rho$. Thus, $w'(t)=g(t)$ in $M_\rho$.

		Moreover, from the discussion above, from \eqref{eq pos mudanca de variavel boa N>2}, and fundamental theorem of calculus, it follows that $w''(t)= 0$ on $M_{\rho}$.
		
		Therefore, by the definition of $M_\rho$, $w(t)$ is a straight line in $[0,\frac{2^{N-2}-1}{N-2}-\rho]$ and a straight line in $[\frac{2^{N-2}-1}{N-2}+\rho,\infty)$ for any $\rho \in (0, \frac{2^{N-2}-1}{N-2})$. 
		
		Observe that, for all $t \in[0,\infty)$, $0\leq w(t) \leq \mathcal{C}_4$. Thus in the unbounded interval $[\frac{2^{N-2}-1}{N-2}+\rho,\infty)$ one has that $w(t)$ must be a constant $\tilde{C}$ and in particular  $w'(t)=g(t)\equiv 0$ on $[\frac{2^{N-2}-1}{N-2}+\rho,\infty)$ for any $\rho \in (0, \frac{2^{N-2}-1}{N-2})$. 
		
		For the bounded interval  $[0,\frac{2^{N-2}-1}{N-2}-\rho]$ observe that $w(0)=0$, then $w(t)$ has to be a straight line $\eta t$ where 
		\begin{equation}\label{eq definicao de eta}
			\eta = w'(0)= \lim_{n\rightarrow \infty}w'_{\alpha_n}(0)=-\lim_{n\rightarrow \infty}u'_{\alpha_n,rad}(1)\alpha_n^{-\frac{1}{2(p-1)}}. 
		\end{equation}
		
		By the convergence of $w_{\alpha_n}$ to $w$ as $n\to \infty$ and the arbitrariness of $\rho$, one concludes that
		\begin{equation*}
			w(t) = \begin{cases}
				\eta t, t\in [0,\frac{2^{N-2}-1}{N-2})\\
				\tilde{C}, t\in (\frac{2^{N-2}-1}{N-2},\infty).
			\end{cases}
		\end{equation*}	
		Therefore, since $w$ is continuous on $[0,\infty)$, 
		\begin{equation}\label{igualdade que nos diz que o problema limite cola bem}
			\eta\,\frac{2^{N-2}-1}{N-2} =  \tilde{C}.
		\end{equation}
		
		Next, the explicit values for $\tilde{C}$ and $\eta$ are computed, proving in particular that they do not depend on the subsequences of $(\alpha_n)$. First, we will analyze the behavior of $w'_\alpha(t)$ for $t \in(\frac{2^{N-2}-1}{N-2},\infty)$. Using equation \eqref{eq pos mudanca de variavel boa N>2} and the bound $\|w_{\alpha}\|_{\infty} \leq \mathcal{C}_4$, one has that
		\begin{align}\label{funcao L1 que limita a derivada no intervalo nao limitado}
			0\leq w'_\alpha(t)&\leq \scalebox{1}{$\alpha^{\frac{1}{2}}\mathcal{C}_4^p\int_{t}^{\infty}\left(\frac{1}{(N-2)s+1}\right)^{\frac{2(N-1)}{N-2}}\left(4\left(\frac{1}{(N-2)s+1}\right)^{\frac{1}{N-2}}\left(1-\left(\frac{1}{(N-2)s+1}\right)^{\frac{1}{N-2}}\right)\right)^\alpha ds$}\nonumber\\
			& = \alpha^{\frac{1}{2}}\mathcal{C}_4^p \int_{0}^{\left(\frac{1}{(N-2)t+1}\right)^{\frac{1}{N-2}}} r^{N-1}(4r(1-r))^\alpha dr\nonumber\\
			&\leq  \scalebox{1}{$\alpha^{\frac{1}{2}}\mathcal{C}_4^p\left(\frac{1}{(N-2)t+1}\right)^{\frac{N-1}{N-2}}\left(4\left(\frac{1}{(N-2)t+1}\right)^{\frac{1}{N-2}}\left(1-\left(\frac{1}{(N-2)t+1}\right)^{\frac{1}{N-2}}\right)\right)^\alpha.$}
		\end{align}
		The last inequality follows from the fact that  $r^{N-1}(r(1-r))^\alpha$ is increasing on $[0,\frac{\alpha+N-1}{2\alpha+N-1})$ and $((N-2)t+1)^{-\frac{1}{N-2}}<\frac{\alpha+N-1}{2\alpha+N-1}$ for $t \in(\frac{2^{N-2}-1}{N-2},\infty)$.
		
		Using \eqref{mudanca de variaveis magica} and \eqref{f magicas dim N>2 e N=2}, one obtains 
		\[\int_{0}^{\infty}|w'_{\alpha_n}|^2(t)dt = \frac{1}{|\mathbb{S}^{N-1}|\alpha_n^{\frac 1 {p-1}}}\int_B |Du_{\alpha_n, rad}|^2 =\frac{(S_{\alpha_n,rad})^{\frac{p+1}{p-1}}}{|\mathbb{S}^{N-1}|\alpha_n^{\frac{1}{p-1}}}. \]
		By Theorem \ref{teorema estimativa sharp expoente de S alpha,rad em relacao a alpha} there exists a constant $C$ such that, taking a subsequence if necessary,  $S_{\alpha_n,rad}\, \alpha_n^{-\frac{1}{p+1}} \to C $ as $n$ increases. Thus
		\[\int_{0}^{\infty}|w'_{\alpha_n}|^2(t)dt \approx \frac{C^{\frac{p+1}{p-1}}}{|\mathbb{S}^{N-1}|}. \]
		
		Write
		\[\int_{0}^{\infty}|w'_{\alpha_n}|^2(t)dt= \int_{0}^{\frac{2^{N-2}-1}{N-2}}|w'_{\alpha_n}|^2(t)dt + \int_{\frac{2^{N-2}-1}{N-2}}^{z}|w'_{\alpha_n}|^2(t)dt + \int_{z}^{\infty}|w'_{\alpha_n}|^2(t)dt \]
		with  any $z > \frac{2^{N-2}-1}{N-2} $. Then observe that, using \eqref{funcao L1 que limita a derivada no intervalo nao limitado}, and making the change of variable $s = f(t)$, the last integral goes to zero as $n \rightarrow \infty$ since $\alpha_n^\frac{1}{2} V_{\alpha_n}(s) \to 0 $, as $\alpha_n \to \infty$, uniformly in each interval of the form $[0, -\epsilon +1/2]$. Moreover, from the convergence of $w'_{\alpha_n} \to w'$, from the uniform bound for $w'_{\alpha_n}$ given by \eqref{limwlin}, the fact that $w'\equiv 0$ on $((2^{N-2}-1)/(N-2), z)$ and the dominated convergence theorem, it follows that the second integral also converges to zero. Therefore, as $n\rightarrow \infty$, using again the dominated convergence theorem on the remaining integral one obtains that
		\[\int_{0}^{\frac{2^{N-2}-1}{N-2}}\eta^2dt = \frac{C^{\frac{p+1}{p-1}}}{|\mathbb{S}^{N-1}|} \]
		implying that 
		\begin{equation}\label{eq relacao entre eta e a cte de S_alpha,rad}
			\eta = \sqrt{\frac{C^{\frac{p+1}{p-1}}(N-2)}{\left(2^{N-2}-1\right)|\mathbb{S}^{N-1}|}}.
		\end{equation}
		Now, using \eqref{mudanca de var util usada na aprx da derivada em 1} and \eqref{mudanca de variaveis magica}, write 
		\[w'_{\alpha_n}(0)= -\frac{u'_{\alpha_n,rad}(1)}{{\alpha_n}^{\frac{1}{2(p-1)}}}= -\frac{y'_{\alpha_n}(1)}{\sqrt{\pi}^{\frac{1}{p-1}}},\]
		and by Corollary \ref{corolario util na aprox da derivada em 1} one has that 
		\begin{equation*}
			w'_{\alpha_n}(0) \approx \frac{1}{\sqrt{\pi}^{\frac{1}{p-1}}} \left(\frac{\alpha_n^{\frac{1}{2}}}{2^N\sqrt{\pi}} \right) y_{\alpha_n}^p\left(\frac{1}{2}\right)\int_{-1}^{1}(1-t^2)^{\alpha_n}(1-t)dt.
		\end{equation*}
		Since Lemma \ref{lemma estimativa dos quocientes com as funcos gama} implies that $\int_{-1}^{1}(1-t^2)^{\alpha_n}(1-t)dt = \sqrt{\pi}\alpha_n^{-\frac{1}{2}}+ O(\alpha_n^{-\frac{3}{2}})$, and 
		\[ y_{\alpha_n}^p\left(\frac{1}{2}\right)= \sqrt{\pi}^{\frac{p}{p-1}}w^p_{\alpha_n}\left(\frac{2^{N-2}-1}{N-2}\right), \]
		one concludes
		\begin{equation*}
			w'_{\alpha_n}(0) \approx \frac{\sqrt{\pi}}{2^N}w^p_{\alpha_n}\left(\frac{2^{N-2}-1}{N-2}\right)(1 + O(\alpha_n^{-1})).
		\end{equation*}
		Due to \eqref{eq definicao de eta} and the continuity of $w$ in $(0, \infty)$, letting $n\rightarrow \infty$: 
		\begin{equation}\label{realcao eta e c til 2}
			\eta = \frac{\sqrt{\pi}}{2^N} \tilde{C}^p.
		\end{equation}
		Thus using \eqref{igualdade que nos diz que o problema limite cola bem} and \eqref{realcao eta e c til 2} we have
		\begin{equation}
			\tilde{C} = \left(\frac{2^N(N-2)}{\sqrt{\pi}(2^{N-2}-1)}\right)^{\frac{1}{p-1}}
		\end{equation}
		and then
		\begin{equation}
			\eta = \left(\frac{2^N}{\sqrt{\pi}}\left(\frac{N-2}{2^{N-2}-1}\right)^p \right)^{\frac{1}{p-1}}.
		\end{equation}
		Finally, using \eqref{eq relacao entre eta e a cte de S_alpha,rad} one obtains
		\begin{equation}\label{cte S_alpha,rad caso N>2}
			C = \left(\frac{N-2}{2^{N-2}-1}\right)\left(\frac{4^N}{\pi}|\mathbb{S}^{N-1}|^{p-1}\right)^{\frac{1}{p+1}}.
		\end{equation}

		\textbf{Step A.2. Case $\mathbf{N=2}$.} 
		
		The arguments follow as in case $N>2$, defining $w_\alpha$ as in \eqref{mudanca de variaveis magica} with $f$ defined in  
		\eqref{f magicas dim N>2 e N=2}. The constants for $N=2$ are
		\begin{equation*}
			\tilde{C} = \left(\frac{4}{\sqrt{\pi}\ln(2)}\right)^{\frac{1}{p-1}}
		\end{equation*}
		\begin{equation*}
			\eta = \left(\frac{4}{\sqrt{\pi}\ln(2)^p}\right)^{\frac{1}{p-1}}
		\end{equation*}
		\begin{equation}\label{cte S_alpha,rad caso N=2}
			C = \frac{1}{\ln(2)}\left(\frac{16}{\pi}(2\pi)^{p-1}\right)^{\frac{1}{p+1}}.
		\end{equation}
		
		\textbf{Step B. The case $\mathbf{N=1}$.}

		The arguments for this case are slightly simpler. Here, define
		\begin{equation}\label{mudanca magica caso N=1}
			w_\alpha(t)= \frac{1}{\alpha^{\frac{1}{2(p-1)}}}u_{\alpha,rad}(1-t),
		\end{equation}
		for $t\in[0,1]$.																																											 Then 
		\begin{equation}\label{caso N=1 comportamento assint sol rad}
			\begin{cases*}
				-w''_\alpha(t) = \alpha^{\frac{1}{2}}(4t(1-t))^\alpha w^p_\alpha(t)\\
				w_\alpha(0)=0 = w'_\alpha(1)
			\end{cases*}	
		\end{equation}
		From Proposition \ref{prop limitacao por cima norma do sup sol rad} and Proposition \ref{prop limitacao por baixo norma do sup sol rad} one has that, for $\alpha$ sufficiently large, $\mathcal{C}_3\leq \|w_\alpha\|_\infty\leq \mathcal{C}_4$. Also notice that
		\begin{align*}
			w'_\alpha(t) &= \alpha^{\frac{1}{2}}\int_{t}^{1}(4s(1-s))^\alpha w_\alpha^p(s)ds\leq \alpha^{\frac{1}{2}} \mathcal{C}_4^p\int_{0}^{1}(4s(1-s))^\alpha ds\\
			&= \alpha^{\frac{1}{2}} \mathcal{C}_4^p \frac{4^\alpha \Gamma(\alpha+1)^2}{\Gamma(2\alpha +2)}= \frac{\sqrt{\pi}}{2}\mathcal{C}_4^p + O(\alpha^{-1}).
		\end{align*}
		Let $(\alpha_n)$ be any sequence such that $\alpha_n \rightarrow \infty$ as $n\rightarrow \infty$. By Arzelà-Ascoli theorem, taking a subsequence if necessary, there exists a continuous function $w$ on $[0,1]$ such that $w_{\alpha_n}\rightarrow w$ uniformly on $[0,1]$. Arguing as in the case $N > 2$, there are continuous functions such that, taking subsequences if necessary, $w'_{\alpha_n}(t) \rightarrow g$ and $w''_{\alpha_n} \rightarrow \kappa$ uniformly on $M_\rho \coloneq [0,\frac{1}{2}-\rho]\cup [\frac{1}{2}+\rho,1]$. Thus, $w'(t) =g(t)$ and $w''(t)=\kappa(t)$ for $t\in M_\rho$. Using \eqref{caso N=1 comportamento assint sol rad} one concludes that $\kappa \equiv 0$. Since $w(0)=0$ and $w(1)=\|w\|_\infty =: \tilde C$, one has that	
		\begin{equation*}
			\begin{cases*}
				w(t)= \eta t, \quad\text{for } t\in[0,\frac{1}{2}]\\
				w(t)= m(1-t)+ \tilde{C},\quad \text{for } t\in [\frac{1}{2},1],
			\end{cases*}
		\end{equation*}
		where 
		\begin{equation}\label{newderivada1}
			\eta = \lim_{\alpha\rightarrow \infty} \frac{-u'_{\alpha,rad}(1)}{\alpha^{\frac{1}{2(p-1)}}}
		\end{equation}
		and $m= \eta - 2\tilde{C}$, because $w$ is continuous. Since $w'_{\alpha}(1)= -\frac{1}{\alpha^{\frac{1}{2(p-1)}}}u'_{\alpha,rad}(0)=0$, it follows that $0=w'(1)=-(\eta-2\tilde{C})$. Thus 
		\begin{equation}\label{eq relacao entre eta e tilde C caso N=1}
			\eta = 2 \tilde{C}
		\end{equation}
		and
		\begin{equation*}
			\begin{cases*}
				w(t)= \eta t, \quad\text{for } t\in[0,\frac{1}{2}]\\
				w(t)= \tilde{C},\quad \text{for } t\in [\frac{1}{2},1].
			\end{cases*}
		\end{equation*}
		Using \eqref{mudanca magica caso N=1}, 
		\begin{equation*}
			\int_{0}^{1}|w'_{\alpha_n}|^2(t) dt =\frac{\left(S_{\alpha_n,rad}\right)^{\frac{p+1}{p-1}}}{2\alpha_n^{\frac{1}{p-1}}}.
		\end{equation*}
		By Theorem \ref{teorema estimativa sharp expoente de S alpha,rad em relacao a alpha} there exists a constant $C$ such that, taking a subsequence if necessary,  $S_{\alpha_n,rad}\, \alpha_n^{-\frac{1}{p+1}} \to C $ as $n$ increases. Thus
		
		\begin{equation*}
			\int_{0}^{1}|w'_{\alpha_n}|^2(t) dt \approx \frac{C^{\frac{p+1}{p-1}}}{2}
		\end{equation*}
		and, as $n\rightarrow \infty$, one has that
		\begin{equation*}
			\int_{0}^{\frac{1}{2}}\eta^2dt = \frac{C^{\frac{p+1}{p-1}}}{2},
		\end{equation*}
		therefore
		\begin{equation}\label{eq relacao entre eta e a cte de S_alpha,rad no caso N=1}
			\eta^2 = C^{\frac{p+1}{p-1}}.
		\end{equation}
		
		Using the approximation Lemma \ref{lemma util que usa a aproximacao de weierstrass} and Corollary \ref{corolario util na aprox da derivada em 1}, it follows that
		\[w'_{\alpha_n}(0) \approx \frac{\sqrt{\pi}}{2}w_{\alpha_n}^p\left(\frac{1}{2}\right)\left(1+ O(\alpha_n^{-1})\right).\]
		Thus, making $n\rightarrow \infty$, one obtains
		\begin{equation*}
			\eta = \frac{\sqrt{\pi}}{2}\frac{\eta^p}{2^p}.
		\end{equation*}
		Therefore
		\begin{equation*}
			\eta = \left(\frac{2^{p+1}}{\sqrt{\pi}}\right)^{\frac{1}{p-1}}.	
		\end{equation*}
		Using \eqref{eq relacao entre eta e tilde C caso N=1} it follows
		\begin{equation*}
			\tilde{C} = \left(\frac{4}{\sqrt{\pi}}\right)^{\frac{1}{p-1}}
		\end{equation*}
		and from \eqref{eq relacao entre eta e a cte de S_alpha,rad no caso N=1} one obtains
		\begin{equation}\label{cte S_alpha,rad caso N=1}
			C = \frac{4}{\pi^{\frac{1}{p+1}}}.
		\end{equation}

		\textbf{Step C. Uniform convergence on $\mathbf{[0,1]}$.}	
		
		Notice that, reversing the change of variables, one can write
		\begin{equation}
			\label{eq : desfazendo a mudanca magica}
			\frac{u_{\alpha,rad}(t)}{\alpha^{\frac{1}{2(p-1)}}} = \begin{cases}
				w_\alpha\left((N-2)^{-1}(t^{2-N}-1)\right)\quad \text{if }N>2,\\
				w_\alpha(-\ln(t)) \quad \text{if }N=2,\\
				w_{\alpha}(1-t)\quad \text{if } N=1.
			\end{cases} 
		\end{equation}
		Thus, by the convergences established for $w_\alpha$ in Steps A and B, \eqref{eq : desfazendo a mudanca magica} gives the uniform convergence on each compact set of $(0,1]$ for $N\geq 2$ and on $[0,1]$ for $N=1$. 
		
		To obtain the uniform convergence on $[0,1]$ for $N\geq 2$, one needs to investigate the behavior of $u_{\alpha,rad}(0)\alpha^{-\frac{1}{2(p-1)}}$ as $\alpha \rightarrow \infty$. It holds that 
		\begin{equation}
			\label{eq convergencia de u_alpha no 0}
			\frac{u_{\alpha,rad}(0)}{\alpha^{\frac{1}{2(p-1)}}} \to \tilde{C} \quad {as} \quad \alpha \to \infty.
		\end{equation}
		
		Indeed, write the integral representation for $u_{\alpha,rad}(0)$ as in \eqref{eq: green function integral representation}, namely
		\begin{multline}\label{eq u_alpha,rad(0) usando green}
			u_{\alpha,rad}(0)\alpha^{-\frac{1}{2(p-1)}}= \alpha^{\frac{1}{2}}\int_0^1 s^{N-1}G_N(0,s)(4s(1-s))^\alpha\left(u_{\alpha,rad}(s)\alpha^{-\frac{1}{2(p-1)}} \right)^p ds\\
			= \frac{\sqrt{\pi}}{2}\int_{-1}^1 \left(\frac{r+1}{2}\right)^{N-1}G_N\left(0,\frac{r+1}{2}\right)\left(u_{\alpha,rad}\left(\frac{r+1}{2}\right)\alpha^{-\frac{1}{2(p-1)}} \right)^p \varphi_\alpha(r)dr 
		\end{multline}    
		with $\varphi_\alpha$ defined as in \eqref{phialpha}. For $s\in [0,2]$ define 
		\begin{equation*}
			f_\alpha(s) = \left(\frac{s}{2}\right)^{N-1}G_N\left(0,\frac{s}{2}\right)\left(u_{\alpha,rad}\left(\frac{s}{2}\right)\alpha^{-\frac{1}{2(p-1)}} \right)^p.
		\end{equation*}
		
		Then, by \eqref{eq u_alpha,rad(0) usando green}, it follows that
		\begin{equation}\label{eq u alpha rad (0) vou passar o limite}
			u_{\alpha,rad}(0)\alpha^{-\frac{1}{2(p-1)}} = \frac{\sqrt{\pi}}{2}\int_{-1}^{1}f_\alpha(r+1)\varphi_\alpha(r) dr.
		\end{equation}
		
		Using \eqref{eq : desfazendo a mudanca magica} one can rewrite $f_\alpha$ as
		\begin{equation}
			f_\alpha(s) = \left(\frac{s}{2}\right)^{N-1}G_N\left(0,\frac{s}{2}\right)w_{\alpha}^p\left(f^{-1}\left(\frac{s}{2}\right)\right),
		\end{equation}
		where $f$ is the change of variables \eqref{f magicas dim N>2 e N=2}, depending on the choice of $N$. Define
		\begin{equation}\label{eq def de f w}
			f_w(s)=\left(\frac{s}{2}\right)^{N-1}G_N\left(0,\frac{s}{2}\right)w^p\left(f^{-1}\left(\frac{s}{2}\right)\right) \quad s\in[0,2].
		\end{equation}
		
		Thus, by the uniform convergence of $w_\alpha$ to $w$ on compact sets $K\subset [0,\infty)$, one has that $f_\alpha(s) \rightarrow f_w(s)$ uniformly on compact sets of the form $2f(K)\subset (0,2]$. Proposition \ref{prop limitacao por cima norma do sup sol rad} implies that $\|w_\alpha\|_{\infty}\leq \mathcal{C}_4$ therefore there exists a constant $m>0$ such that $\|f_\alpha\|_\infty \leq m$. 
		
		Take $\sigma= \frac{1}{2}$, then, given $\epsilon>0$, there exist $\alpha_1>0$ such that $\alpha>\alpha_1$ implies $|f_\alpha(x)-f_w(x)|< \frac{\epsilon}{24}$ for all $x\in [1-\sigma,1+\sigma]$. By the continuity of $f_w$, there exists $\tilde{\sigma}>0$ such that, if $x,y\in [1-\sigma,1+\sigma]$ and $|x-y|\leq\tilde{\sigma}$ then $|f_w(x)-f_w(y)|< \frac{\epsilon}{24}$. Let $\delta = \min\{\sigma,\tilde{\sigma}\}$.
		
		As in Lemma \ref{lemma aprox de weierstrass} one has that $\varphi_\alpha(x) \rightarrow 0$ uniformly for $|x|\in [\delta,1]$ and
		\[g(\alpha)=\int_{-1}^{1}\varphi_\alpha(s)ds-1 = O(\alpha^{-1}) \quad\text{as }\alpha\to \infty.\]
		Let $\alpha_2>0$ be such that $\alpha>\alpha_2$ implies $|\varphi_\alpha(t)|<\frac{\epsilon}{8m}$ for $|t|\geq \delta$ and $|g(\alpha)|<\min\{1,\frac{\epsilon}{8m}\}$.
		
		Define
		\begin{equation*}
			P_\alpha(1)= \int_{-1}^{1}f_\alpha(r+1)\varphi_\alpha(r) dr.
		\end{equation*}
		Then, for $\alpha > \max\{\alpha_1,\alpha_2\}$, proceeding as in the proof of Lemma \ref{lemma aprox de weierstrass}, one has that
		
		\[|f_\alpha(1)-P_\alpha(1)|< \epsilon.\]
		
		Therefore
		\begin{equation}\label{eq util para passar o lmite em u alpha rad(0)}
			\lim_{\alpha\rightarrow \infty} P_\alpha(1) = \lim_{\alpha\rightarrow \infty} f_\alpha(1) = f_w(1)= \frac{1}{2^{N-1}}G_N\left(0,\frac{1}{2}\right)w^p\left(f^{-1}\left(\frac{1}{2}\right)\right).
		\end{equation}
		From \eqref{eq u alpha rad (0) vou passar o limite}, \eqref{eq util para passar o lmite em u alpha rad(0)} and \eqref{igualdade que nos diz que o problema limite cola bem} one concludes
		\begin{multline*}
			\lim_{\alpha\rightarrow \infty}	u_{\alpha,rad}(0)\alpha^{-\frac{1}{2(p-1)}} = \frac{\sqrt{\pi}}{2^N}G_N\left(0,\frac{1}{2}\right)w^p\left(f^{-1}\left(\frac{1}{2}\right)\right) = \frac{\sqrt{\pi}}{2^N}G_N\left(0,\frac{1}{2}\right)\tilde{C}^p\\
			= \eta G_N\left(0,\frac{1}{2}\right) = \tilde{C}.
		\end{multline*}
		Since $u_{\alpha,rad}(t)\alpha^{-\frac{1}{2(p-1)}}$ is decreasing on $[0,1]$, we obtain the uniform convergence of $u_{\alpha,rad}(t)\alpha^{-\frac{1}{2(p-1)}}$ on the compact set $[0,1]$ for $N\geq 2$.
		
		Finally, observe that \eqref{teoderivada1} follows from \eqref{newderivada1} in the case $N=1$, \eqref{eq definicao de eta} if $N >2$, and by adapting arguments used to prove \eqref{eq definicao de eta} to the case $N=2$.
	\end{proof}

	\begin{proof}[\textbf{Proof of Theorem \ref{teo caracterizacao de S alpha rad}}]
		This is a consequence of Theorem \ref{teorema estimativa sharp expoente de S alpha,rad em relacao a alpha} combined with: equation \eqref{eq relacao entre eta e a cte de S_alpha,rad} and the explicit values obtained in \eqref{cte S_alpha,rad caso N>2} and \eqref{cte S_alpha,rad caso N=2}, for $N\geq 2$; equation \eqref{eq relacao entre eta e a cte de S_alpha,rad no caso N=1} and the explicit value obtained in \eqref{cte S_alpha,rad caso N=1} for $N=1$.
	\end{proof}
	
	\begin{proof}[\textbf{Proof of Corollary \ref{corl: escrevendo a convg em termos da funcao de green}}]
		\hypertarget{proofCorolGreen}{}

		The convergence
		\begin{equation*}
			\frac{u_{\alpha,rad}}{\alpha^{\frac{1}{2(p-1)}}}(t) \rightarrow \eta G_N\left(t,\frac{1}{2}\right) 
		\end{equation*}
		is an immediate consequence of calculating $\eta G_N\left(t,\frac{1}{2}\right)$ and applying Theorem \ref{teorema comport assintotico sol radial}. 
	\end{proof}
	\begin{remk}
		Notice that $G_N$ here differs from $G$ defined in  \cite{Singh2014} by a minus sign because the equation in the present paper involves $-\Delta$ instead of $\Delta$.
	\end{remk}

	\section{Open problems}\label{sec: open problems}
	We leave the following as open problems. 
	
	\textbf{Open Problem 1. } Investigating the radial symmetry of ground state solutions in the case $p(N-2)\leq N$. The restriction $p >\frac{N}{N-2}$ in Theorem \ref{teorema quebra de simetria} is needed in order to guarantee that $\frac{1}{p+1}>1+ \frac{N}{p+1}-\frac{N}{2}$. 
	
	\textbf{Open Problem 2.  } In view of Theorem \ref{teorema solucoes decrescem fora da bola de raio meio}, since the maximum of $V_{\alpha}$ is attained at $1/2$, we conjecture that ground state solutions of problem \eqref{problema-principal} concentrate at points on the sphere of radius $1/2$ as $\alpha \to \infty$.

	\section{Numerical analysis of radial solutions}\label{sec: analise numerica}
	
	In this section the Runge-Kutta-Oliver method developed in \cite{Tsitouras2019} is applied to equation \eqref{problema-principal} in the radial setting, by taking the partition $\{x_i = \frac{i}{k} \text{ for } i=0,\ldots, k \}$ of $[0,1]$ with $k= 1000$, $N=3$, $p=3$ and various values for $\alpha$. Figure \ref{fig:solucao diferentes alphas} shows the behavior of the normalized solution of problem \eqref{problema-principal} as $\alpha$ increases. The behavior, as expected, follows the asymptotics of Theorem \ref{teorema comport assintotico sol radial}.  Figure \ref{fig:derivada da solucao diferentes alphas} gives the behavior of the \emph{derivative} of the normalized solution with different values of $\alpha$.

	\begin{figure}[H]
		\centering
		\includegraphics[width=0.8\textwidth,trim={0 0.13cm 0 0.1cm},clip]{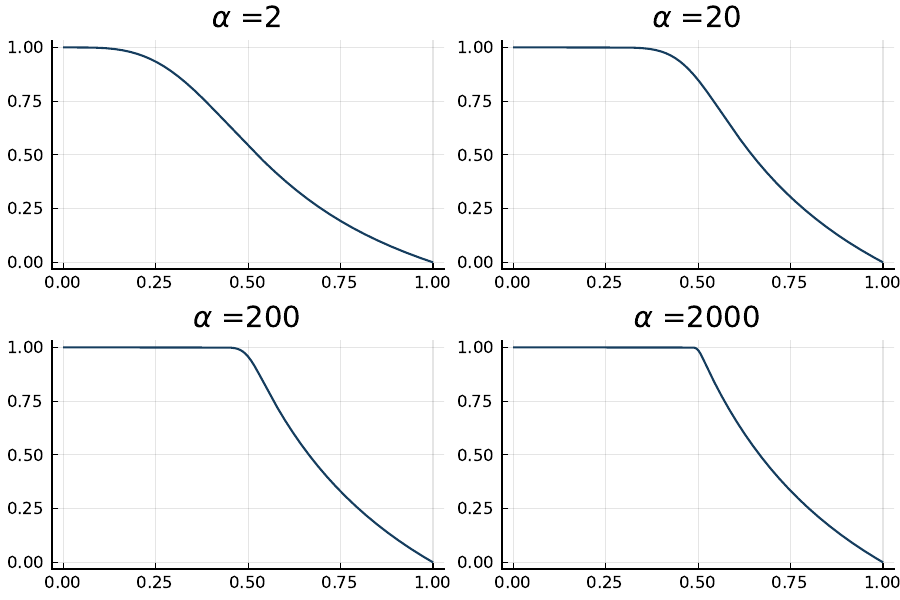}
		\caption{Asymptotic behavior of the normalized solution with $N=3,p=3$ and different values of $\alpha$.}
		\label{fig:solucao diferentes alphas}
	\end{figure}

\begin{figure}[H]
		\centering
		\includegraphics[width=0.8\textwidth,trim={0 0.13cm 0 0.1cm},clip]{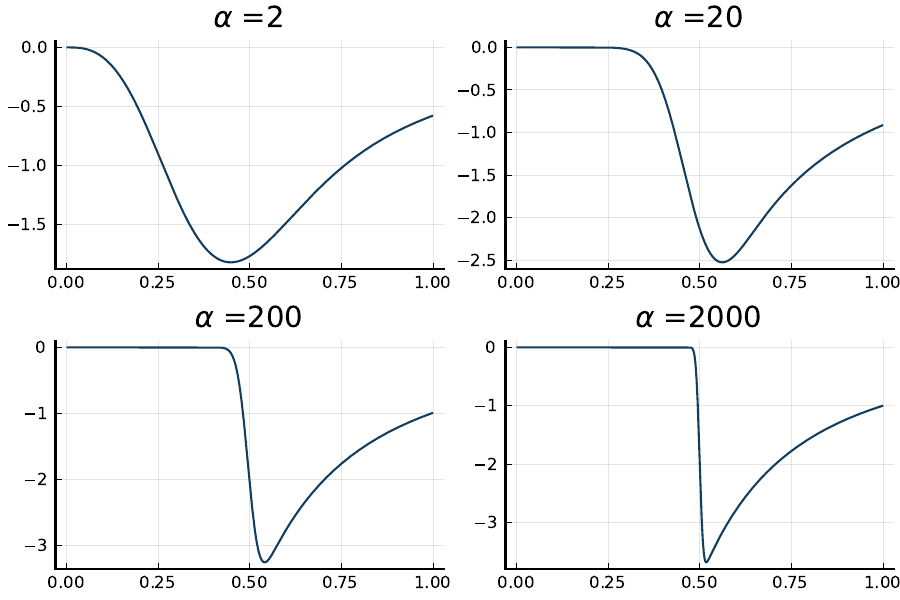}
		\caption{Asymptotic behavior of the derivative of the normalized solution with $N=3,p=3$ and different values of $\alpha$.}
		\label{fig:derivada da solucao diferentes alphas}
	\end{figure}

   Table \ref{tab: tabela S_alpha_rad X alpha E u0 X alpha} gives some approximate values for $S_{\alpha,rad}\alpha^{-\frac{1}{p+1}}$ and $u_{\alpha,rad}(0)\alpha^{-\frac{1}{2(p-1)}}$ as $\alpha$ increases, using the numerical solution as in the previous paragraph. For $\alpha \to \infty$, the values $C$ and $\tilde{C}$ are obtained in Theorem \ref{teo caracterizacao de S alpha rad} and Theorem \ref{teorema comport assintotico sol radial}. 

      Figures \ref{fig:S_alpha,rad x alpha} and \ref{fig:u0 x alpha} show these values as a function of $\alpha$. For the Hénon equation, in \cite[Theorem~4.1]{SMETS2002}, the authors have shown that the best constant $C$ associated with  $S_\alpha^R$ ($S_{\alpha,rad}$ in this paper) is approached monotonically as $\alpha$ increases. Figure \ref{fig:S_alpha,rad x alpha} indicates that the same occurs with the weight in this paper, that is, $C = \inf_{\alpha>0} S_{\alpha,rad}\alpha^{-\frac{1}{p+1}}$. Figure \ref{fig:u0 x alpha} indicates that $\tilde{C}$ behaves similarly, that is, $\tilde{C} = \inf_{\alpha>0}u_{\alpha,rad}(0)\alpha^{-\frac{1}{2(p-1)}}$.
    \begin{table}[H]
		\centering
		\begin{tabular}{ccc}
			
			\toprule
			$\alpha$ & $S_{\alpha,rad}\alpha^{-\frac{1}{p+1}}$ & $u_{\alpha,rad}(0)\alpha^{-\frac{1}{2(p-1)}}$\\
			
			\midrule 
			2 & 10.7111&4.7755\\

			20 & 8.7729&2.8105\\
			
			200& 7.9522&2.3164\\
			
			2000 & 7.6687&2.1826\\
			
			\vdots   &		\vdots		&\vdots	\\
			
			$\infty$ & 7.5312&2.1245\\
			\bottomrule
		\end{tabular}
		\caption{ Numerical values of $S_{\alpha,rad}\alpha^{-\frac{1}{p+1}}$ and $u_{\alpha,rad}(0)\alpha^{-\frac{1}{2(p-1)}}$ for different values of $\alpha$, obtained by taking the partition $\{x_i = \frac{i}{k} \text{ for } i=0,\ldots, k \}$ of $[0,1]$ with $k= 1000$, $N=3$, $p=3$, and applying \cite{Tsitouras2019}. } 
		\label{tab: tabela S_alpha_rad X alpha E u0 X alpha}
	\end{table}

\begin{figure}[H]
		\centering
		\includegraphics[width=0.8\textwidth,trim={0 0.15cm 0 0.1cm},clip]{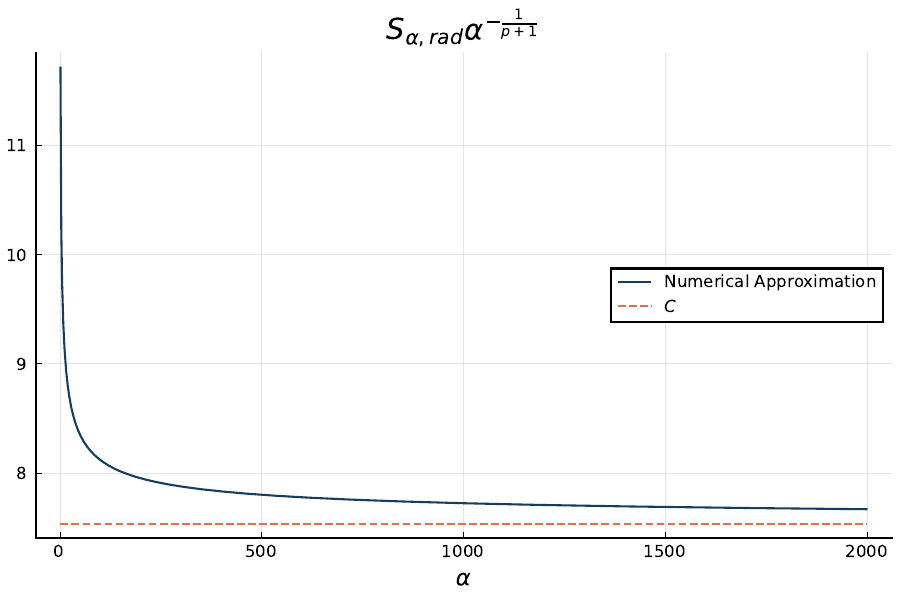}
		\caption{Behavior of $S_{\alpha,rad}\alpha^{-\frac{1}{p+1}}$ with $N=3,p=3$ as $\alpha$ increases.}
		\label{fig:S_alpha,rad x alpha}
	\end{figure}
	\begin{figure}[H]
		\centering
		\includegraphics[width=0.8\textwidth,trim={0 0.15cm 0 0.1cm},clip]{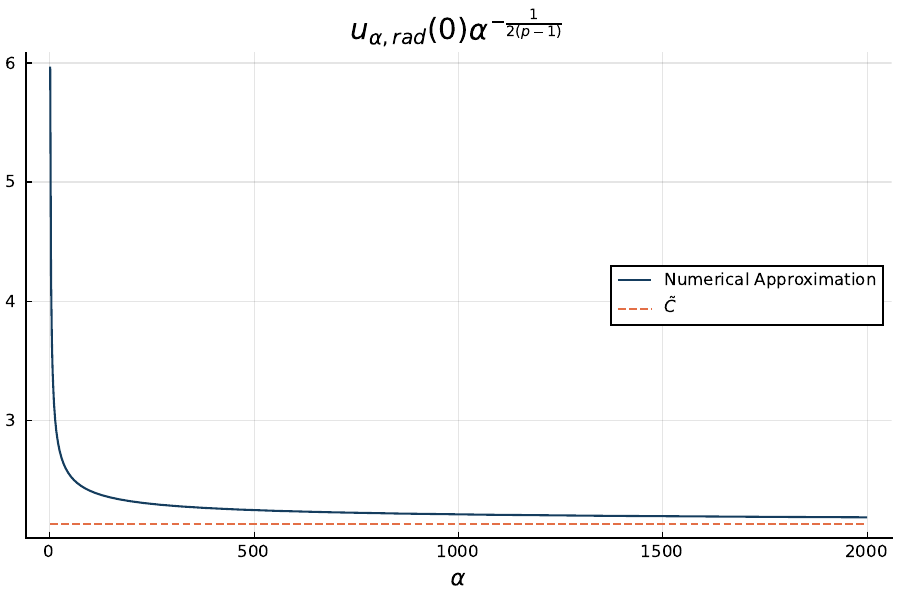}
		\caption{Behavior of $u_{\alpha,rad}(0)\alpha^{-\frac{1}{2(p-1)}}$ with $N=3,p=3$ as $\alpha$ increases.}
		\label{fig:u0 x alpha}
	\end{figure}


	\section*{Acknowledgments}
	
	Pablo dos Santos Corrêa Junior is supported by the Coordenação de Aperfeiçoamento de Pessoal de Nível Superior - Brasil (CAPES) - Finance Code 001.
	
	Ederson Moreira dos Santos is partially supported by CNPq grant 312867/2023-9 and FAPESP grant 2022/16407-1.
	
	Delia Schiera is partially supported by the Portuguese government through FCT Fundação para a Ciência e a Tecnologia and the Recovery and Resilience Plan (PRR) through projects UID/04459/2025 and UID/PRR/04459/2025 (CAMGSD), 2023.17881.ICDT with DOI  identifier 10.54499/2023.17881.ICDT (project SHADE), 2024.14494.PEX, DOI  10.54499/2024.14494.PEX (project ASSO), and through the FCT Mobility grant FCT/Mobility/1304796440/2024-25.
	She is also supported by GNAMPA (Gruppo Nazionale per l’Analisi, Probabilità e le loro Applicazioni) – INdAM (Istituto
	Nazionale di Alta Matematica), through the project ‘Critical and limiting phenomena in nonlinear elliptic
	systems’, CUP E5324001950001, and by FCT 
	under the Scientific Employment Stimulus - Individual Call (CEEC Individual), DOI identifier  10.54499/2020.02540.CEECIND/CP1587/CT0008.

	\bibliographystyle{plain}
	\bibliography{referencias-edp.bib}

@Book{2010_Evans_BOOK,
  author    = {Evans, Lawrence},
  publisher = {American Mathematical Society},
  title     = {Partial differential equations},
  year      = {2010},
  address   = {Providence, R.I},
  isbn      = {9780821849743},
}

@Book{2007_Lima_BOOK,
  author    = {Lima, Elon},
  publisher = {Instituto de Matem\'atica Pura e Aplicada, CNPq},
  title     = {Espa\c{c}os m\'etricos},
  year      = {2007},
  address   = {Rio de Janeiro},
  isbn      = {9788524401589},
  edition   = {5},
  series    = {Projeto Euclides},
  }

@Book{2011_Brezis_BOOK,
  author    = {Br{é}zis, {H}aïm},
  publisher = {Springer},
  title     = {Functional analysis, {S}obolev spaces and partial differential equations},
  year      = {2011},
  address   = {New York London},
  isbn      = {9780387709130},
}

@Article{Henon1974,
  author    = {Michel H{\'{e}}non},
  journal   = {Symposium - International Astronomical Union},
  title     = {Numerical {E}xperiments on the {S}tability of {S}pherical {S}tellar {S}ystems},
  year      = {1974},
  pages     = {259--259},
  volume    = {62},
  doi       = {10.1017/s0074180900070662},
  publisher = {Cambridge University Press ({CUP})},
}

@Article{Ni1982,
  author   = {Ni, Wei-Ming},
  journal  = {Indiana University Mathematics Journal},
  title    = {A nonlinear {Dirichlet} problem on the unit ball and its applications},
  year     = {1982},
  issn     = {0022-2518},
  pages    = {801--807},
  volume   = {31},
  doi      = {10.1512/iumj.1982.31.31056},
  language = {English},
  zbl      = {0515.35033},
  zbmath   = {3814144},
}

@Article{Cao2008,
  author    = {Daomin Cao and Shuangjie Peng and Shusen Yan},
  journal   = {{IMA} Journal of Applied Mathematics},
  title     = {Asymptotic behaviour of ground state solutions for the Henon equation},
  year      = {2008},
  month     = {dec},
  number    = {3},
  pages     = {468--480},
  volume    = {74},
  doi       = {10.1093/imamat/hxn035},
  publisher = {Oxford University Press ({OUP})},
}

@Article{Gidas1979,
  author   = {Gidas, Basilis and Ni, Wei-Ming and Nirenberg, Louis},
  journal  = {Communications in Mathematical Physics},
  title    = {Symmetry and related properties via the maximum principle},
  year     = {1979},
  issn     = {0010-3616},
  pages    = {209--243},
  volume   = {68},
  doi      = {10.1007/BF01221125},
  language = {English},
  zbl      = {0425.35020},
  zbmath   = {3661841},
}

@Article{Bartsch2005,
  author    = {Bartsch, Thomas and Weth, Tobias and Willem, Michel},
  journal   = {Journal d’Analyse Mathématique},
  title     = {Partial symmetry of least energy nodal solutions to some variational problems},
  year      = {2005},
  issn      = {1565-8538},
  month     = dec,
  number    = {1},
  pages     = {1--18},
  volume    = {96},
  doi       = {10.1007/bf02787822},
  publisher = {Springer Science and Business Media LLC},
}

@Article{Tricomi1951,
  author   = {Tricomi, Francesco G. and Erd{\'e}lyi, Arthur},
  journal  = {Pacific Journal of Mathematics},
  title    = {The asymptotic expansion of a ratio of gamma functions},
  year     = {1951},
  issn     = {1945-5844},
  pages    = {133--142},
  volume   = {1},
  doi      = {10.2140/pjm.1951.1.133},
  language = {English},
  zbl      = {0043.29103},
  zbmath   = {3066945},
}

@Article{SMETS2002,
  author    = {Smets, Didier and Willem, Michel and Su, Jibao},
  journal   = {Communications in Contemporary Mathematics},
  title     = {NON-RADIAL GROUND STATES FOR THE {H}éNON EQUATION},
  year      = {2002},
  issn      = {1793-6683},
  month     = aug,
  number    = {03},
  pages     = {467--480},
  volume    = {04},
  doi       = {10.1142/s0219199702000725},
  publisher = {World Scientific Pub Co Pte Lt},
}

@Article{Byeon2006,
  author    = {Byeon, Jaeyoung and Wang, Zhi-Qiang},
  journal   = {Annales de l’Institut Henri Poincaré C, Analyse non linéaire},
  title     = {On the {H}énon equation: asymptotic profile of ground states, {I}},
  year      = {2006},
  issn      = {1873-1430},
  month     = dec,
  number    = {6},
  pages     = {803--828},
  volume    = {23},
  doi       = {10.1016/j.anihpc.2006.04.001},
  publisher = {European Mathematical Society - EMS - Publishing House GmbH},
}

@Article{LeitedaSilva2021,
  author    = {Leite da Silva, Wendel and Moreira dos Santos, Ederson},
  journal   = {Journal of Differential Equations},
  title     = {Asymptotic profile and {M}orse index of the radial solutions of the {H}énon equation},
  year      = {2021},
  issn      = {0022-0396},
  month     = jun,
  pages     = {212--235},
  volume    = {287},
  doi       = {10.1016/j.jde.2021.03.050},
  publisher = {Elsevier BV},
}

@Article{Mercuri2019,
  author    = {Mercuri, Carlo and Moreira dos Santos, Ederson},
  journal   = {Nonlinearity},
  title     = {Quantitative symmetry breaking of groundstates for a class of weighted {E}mden–{F}owler equations},
  year      = {2019},
  issn      = {1361-6544},
  month     = oct,
  number    = {11},
  pages     = {4445--4464},
  volume    = {32},
  doi       = {10.1088/1361-6544/ab2d6f},
  publisher = {IOP Publishing},
}

@Book{Artin2015,
  author    = {Artin, Emil},
  editor    = {Michael Butler},
  publisher = {Dover Publications, Incorporated},
  title     = {The Gamma Function},
  year      = {2015},
  address   = {Newburyport},
  edition   = {1st ed.},
  isbn      = {9780486803005},
  series    = {Dover Books on Mathematics Series},
  pagetotal = {172},
  ppn_gvk   = {1881221911},
}

@Book{Struwe2010,
  author    = {Struwe, Michael},
  publisher = {Springer},
  title     = {Variational methods},
  year      = {2010},
  address   = {Berlin [u.a.]},
  edition   = {4. ed., pbk.},
  isbn      = {9783642093296},
  number    = {Folge 3, 34},
  series    = {Ergebnisse der Mathematik und ihrer Grenzgebiete},
  pagetotal = {302},
  ppn_gvk   = {64089478X},
  subtitle  = {Applications to nonlinear partial differential equations and Hamiltonian systems},
}

@Article{Bonheure2008,
  author   = {Bonheure, Denis and Serra, Enrico and Tarallo, Massimo},
  journal  = {Advances in Differential Equations},
  title    = {Symmetry of extremal functions in {Moser}-{Trudinger} inequalities and a {H{\'e}non} type problem in dimension two},
  year     = {2008},
  issn     = {1079-9389},
  number   = {1-2},
  pages    = {105--138},
  volume   = {13},
  language = {English},
  zbl      = {1173.35044},
  zbmath   = {5372150},
}

@Article{Gidas1981,
  author    = {Gidas, Basilis and Spruck, Joel},
  journal   = {Communications on Pure and Applied Mathematics},
  title     = {Global and local behavior of positive solutions of nonlinear elliptic equations},
  year      = {1981},
  issn      = {1097-0312},
  month     = jul,
  number    = {4},
  pages     = {525--598},
  volume    = {34},
  doi       = {10.1002/cpa.3160340406},
  publisher = {Wiley},
}

@Article{Wong1975,
  author    = {Wong, James S. W.},
  journal   = {SIAM Review},
  title     = {On the Generalized {E}mden–{F}owler Equation},
  year      = {1975},
  issn      = {1095-7200},
  month     = apr,
  number    = {2},
  pages     = {339--360},
  volume    = {17},
  doi       = {10.1137/1017036},
  publisher = {Society for Industrial & Applied Mathematics (SIAM)},
}

@Article{Ioku2018,
  author    = {Ioku, Norisuke},
  journal   = {Mathematische Annalen},
  title     = {Attainability of the best Sobolev constant in a ball},
  year      = {2018},
  issn      = {1432-1807},
  month     = nov,
  number    = {1–2},
  pages     = {1--16},
  volume    = {375},
  doi       = {10.1007/s00208-018-1776-7},
  publisher = {Springer Science and Business Media LLC},
}

@Book{Hansen2004,
  author    = {Hansen, Carl J. and Kawaler, Steven D. and Trimble, Virginia},
  publisher = {Springer New York},
  title     = {Stellar Interiors: Physical Principles, Structure, and Evolution},
  year      = {2004},
  isbn      = {9781441991102},
  doi       = {10.1007/978-1-4419-9110-2},
  issn      = {0941-7834},
  journal   = {Astronomy and Astrophysics Library},
}

@Article{Amadori2017,
  author    = {Amadori, Anna Lisa and Gladiali, Francesca},
  journal   = {Nonlinear Analysis},
  title     = {Nonradial sign changing solutions to {L}ane–{E}mden problem in an annulus},
  year      = {2017},
  issn      = {0362-546X},
  month     = may,
  pages     = {294--305},
  volume    = {155},
  doi       = {10.1016/j.na.2017.02.027},
  publisher = {Elsevier BV},
}

@Article{Dancer2012,
  author    = {Dancer, Edward N. and Guo, Zongming and Wei, Juncheng},
  journal   = {Indiana University Mathematics Journal},
  title     = {Non-radial singular solutions of {L}ane-{E}mden equations in ${R}^{N}$},
  year      = {2012},
  issn      = {0022-2518},
  number    = {5},
  pages     = {1971--1996},
  volume    = {61},
  doi       = {10.1512/iumj.2012.61.4749},
  publisher = {Indiana University Mathematics Journal},
}

@Article{Talenti1976,
  author    = {Talenti, Giorgio},
  journal   = {Annali di Matematica Pura ed Applicata},
  title     = {Best constant in {S}obolev inequality},
  year      = {1976},
  issn      = {1618-1891},
  month     = dec,
  number    = {1},
  pages     = {353--372},
  volume    = {110},
  doi       = {10.1007/bf02418013},
  publisher = {Springer Science and Business Media LLC},
}

@Book{Folland1999,
  author    = {Folland, Gerald B.},
  publisher = {Wiley},
  title     = {Real analysis},
  year      = {1999},
  address   = {New York},
  edition   = {2. ed.},
  isbn      = {9780471317166},
  note      = {"A Wiley-Interscience publication." - Includes bibliographical references and index},
  series    = {Pure and applied mathematics},
  pagetotal = {386},
  ppn_gvk   = {24940365X},
  subtitle  = {Modern techniques and their applications},
}

@Article{Singh2014,
  author    = {Singh, Randhir and Nelakanti, Gnaneshwar and Kumar, Jitendra},
  journal   = {Proceedings of the National Academy of Sciences, India Section A: Physical Sciences},
  title     = {Approximate Solution of Two-Point Boundary Value Problems Using Adomian Decomposition Method with {G}reen’s Function},
  year      = {2014},
  issn      = {2250-1762},
  month     = dec,
  number    = {1},
  pages     = {51--61},
  volume    = {85},
  doi       = {10.1007/s40010-014-0170-4},
  publisher = {Springer Science and Business Media LLC},
}

@Article{Tsitouras2019,
  author    = {Tsitouras, Charalampos},
  journal   = {Applied Mathematics and Computation},
  title     = {Explicit {R}unge–{K}utta methods for starting integration of {L}ane–{E}mden problem},
  year      = {2019},
  issn      = {0096-3003},
  month     = aug,
  pages     = {353--364},
  volume    = {354},
  doi       = {10.1016/j.amc.2019.02.047},
  publisher = {Elsevier BV},
}

@article {Kajikiya2012,
    AUTHOR = {Kajikiya, Ryuji},
     TITLE = {Non-radial least energy solutions of the generalized {H}\'enon
              equation},
   JOURNAL = {J. Differential Equations},
  FJOURNAL = {Journal of Differential Equations},
    VOLUME = {252},
      YEAR = {2012},
    NUMBER = {2},
     PAGES = {1987--2003},
      ISSN = {0022-0396,1090-2732},
   MRCLASS = {35J91 (35A01 35B09 35J20 35J25)},
  MRNUMBER = {2853568},
MRREVIEWER = {Shengbing\ Deng},
       DOI = {10.1016/j.jde.2011.08.032},
       URL = {https://doi.org/10.1016/j.jde.2011.08.032},
}

\end{document}